\newtheorem{thm}{Theorem}[section]
\newtheorem{prop}{Proposition}[section]
\newtheorem{lem}{Lemma}[section]
\newtheorem{rem}{Remark}[section]
\newenvironment{proof}[1][Proof.]{\par\noindent\textbf{#1} }
{\hfill~$\square$\\\medskip}
\newcommand{\CQFD}{\hfill $\square$\\}
\newcommand{\ind}{\mathbf{1}}
\definecolor{Red}{rgb}{1.00, 0.00, 0.00}
\def\aa{\boldsymbol{\alpha}}
\def\kk{\boldsymbol{k}}
\def\ttt{\boldsymbol{t}}
\def\lcin{\mbox{LCI}_n}
\def\bbR{{\mathbb R}}
\def\bbe{{\mathbb E}}
\def\ee{{\mathbb E}}
\def\bbn{{\mathbb N}}
\def\bbp{{\mathbb P}}
\def\bbz{{\mathbb Z}}
\def\pp{{\mathbb P}}
\def\nit{{\mathbb N}}
\def\cala{{\cal A}}
\def\call{{\cal L}}
\def\calv{{\cal V}}
\def\Ccal{{\cal C}}
\def\var{\mathop{\rm Var}}
\def\cov{\mathop{\rm Cov}}
\def\Var{\mathop{\rm Var}}
\def\indep{\perp \!\!\!\!\perp}
\numberwithin{equation}{section}
\def\bwedge{\textstyle\bigwedge}
\def\cvP{\stackrel{\pp}{\longrightarrow}}
\begin{document}
\title{
On the limiting law of the length of the longest common and increasing subsequences in random words}
\author{Jean-Christophe Breton\thanks{IRMAR, UMR 6625, Universit\'e de Rennes 1, 
263 Avenue du G\'en\'eral Leclerc CS 74205, 35042, Rennes, France, 
\href{mailto:jean-christophe.breton@univ-rennes1.fr}{jean-christophe.breton@univ-rennes1.fr}.
Many thanks to the School of Mathematics of the Georgia Institute of 
Technology for several visits during which part of this work was done.} 
\and Christian Houdr\'e\thanks{School of Mathematics, Georgia
 Institute of Technology, Atlanta, GA 30332, USA, \href{mailto:houdre@math.gatech.edu}{houdre@math.gatech.edu}. 
Research supported in part by the 
grant \#246283 from the Simons Foundation and by a Simons Foundation Fellowship grant \#267336.  
Many thanks to the Centre Henri Lebesgue of the Universit\'e de Rennes 1, 
the D\'epartement MAS of 
\'Ecole Centrale Paris, to the LPMA of the Universit\'e Pierre et Marie Curie and to CIMAT, 
Gto, Mexico for their hospitality, while this work was in progress. 
\newline\indent
Keywords:  Longest Common Subsequence, Longest Increasing Subsequence, Random Words, 
Random Matrices, Donsker's Theorem,
Optimal Alignment, Last Passage Percolation.
\newline\indent
MSC 2010: 05A05, 60C05, 60F05.}}

\maketitle
\centerline{\`A la M\'emoire de Marc Yor}
\begin{abstract}
Let $X=(X_i)_{i\ge 1}$ and $Y=(Y_i)_{i\ge 1}$ be two sequences of independent 
and identically distributed (iid) random variables taking their values, uniformly, 
in a common totally ordered finite alphabet. Let $\lcin$ be the length of the longest 
common and (weakly) increasing 
subsequence of $X_1\cdots X_n$ and $Y_1\cdots Y_n$. As $n$ grows without bound, 
and when properly centered and scaled, $\lcin$ is shown to converge, in distribution, 
towards a Brownian functional that we identify.
\end{abstract}


\section{Introduction}
\label{sec:intro}

We analyze below the asymptotic behavior of the length of the longest common 
subsequence in random words with an additional (weakly) increasing requirement. 
Although it has been studied from an algorithmic point of view in computer science, 
bio-informatics, or statistical physics (see, for instance, \cite{CZFYZ}, 
\cite{DKFPWS} or \cite{Sakai}),  
to name but a few fields, mathematical results for this hybrid 
problem are very sparse.  
To present our framework, let $X=(X_i)_{i\ge 1}$ and $Y=(Y_i)_{i\ge 1}$ be two 
infinite sequences whose coordinates take their values 
in $\cala_m=\{\aa_1<\aa_2< \cdots <\aa_m\}$, a finite totally ordered alphabet of 
cardinality $m$.   
Next, $\lcin$, the length of the longest common and (weakly) increasing 
subsequences of the words $X_1\cdots X_n$ and $Y_1\cdots Y_n$ is the maximal 
integer $k\in \{1,\dots, n\}$, such that there 
exist $1\le i_1< \cdots < i_k\le n$ and $1\le j_1<\cdots <j_k\le n$, satisfying 
the following two conditions:
\begin{itemize}
\item[(i)]  $X_{i_s}=Y_{j_s}$, for all $s=1,2,\dots, k$,
\item[(ii)] $X_{i_1}\le X_{i_2}\le \cdots \le X_{i_k}$ and $Y_{j_1}\le Y_{j_2}
\le \cdots \le Y_{j_k}$. 
\end{itemize}
(Asymptotically, the strictly increasing case is of little interest, having $m$ as a pointwise limiting behavior.) $\lcin$ is a measure of the similarity/dissimilarity of the random words often used in pattern matching, and its asymptotic behavior is the purpose of our study. 
This limiting behavior differs from the one of another better-known, measure of similarity/dissimilarity, namely, ${\rm LC}_n$, the length of the longest common subsequences of two or more random words.  Indeed, after renormalization, the first result on ${\rm LC}_n$, obtained in \cite{HI}, reveals, under a sublinear variance lower bound assumption, a normal limiting law.   
In contrast, for $\lcin$, we have:   

\begin{thm}
\label{thm1.1}
Let $X=(X_i)_{i\ge 1}$ and $Y=(Y_i)_{i\ge 1}$ be two sequences of iid random variables uniformly distributed on $\cala_m=\{\aa_1<\aa_2< \cdots <\aa_m\}$, a totally ordered finite alphabet of cardinality $m$.  Let $\lcin$ be the length of the longest common and increasing subsequences of $X_1 \cdots X_n$ and $Y_1 \cdots Y_n$. 
Then, as $n\to +\infty$,   
\begin{eqnarray}
\label{eq00}
\frac{\lcin - {n/m}}{\sqrt{n/m}} \Longrightarrow 
\max_{0=t_0 \le t_1 \le \dots \le t_{m-1} \le t_m=1}
\!\min\!\left(\!-\frac{1}{m}\!\sum_{i=1}^m\!B^{(i)}_1\!(1)
+\sum_{i=1}^m\!\left(\!B^{(i)}_1\!(t_i)-B^{(i)}_1\!(t_{i-1})\right), \right.\nonumber\\ 
\left.\!\! \!-\frac{1}{m}\!\sum_{i=1}^m\!B^{(i)}_2\!(1)+ 
\sum_{i=1}^m\left(B^{(i)}_2\!(t_i)-B^{(i)}_2\!(t_{i-1})\right)\!\right)\!\!, 
\end{eqnarray}
where $B_1$ and $B_2$ are two $m$-dimensional standard 
Brownian motions on $[0,1]$.
\end{thm}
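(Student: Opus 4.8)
The strategy is a combinatorial reduction of $\lcin$ to an explicit extremal problem, followed by a rescaling and a passage to the limit via Donsker's invariance principle and the continuous mapping theorem. For the first step, observe that a common weakly increasing subsequence of $X_1\cdots X_n$ and $Y_1\cdots Y_n$ is necessarily of the form $\aa_1^{k_1}\aa_2^{k_2}\cdots\aa_m^{k_m}$ for some $k_1,\dots,k_m\ge 0$. A greedy (leftmost) argument shows that such a ``shape'' $(k_1,\dots,k_m)$ is realized inside $X_1\cdots X_n$ if and only if there are cut points $0=c_0\le c_1\le\cdots\le c_m=n$ with at least $k_i$ letters equal to $\aa_i$ among $X_{c_{i-1}+1},\dots,X_{c_i}$, for every $i$, and likewise for $Y$ with its own cut points. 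Writing $N_i^Z(a,b)$ for the number of $\aa_i$'s among $Z_{a+1},\dots,Z_b$, and maximizing over $k_1,\dots,k_m$ at fixed cut vectors, one gets
\[
\lcin=\max \sum_{i=1}^m\min\!\left(N_i^X(c^X_{i-1},c^X_i),\,N_i^Y(c^Y_{i-1},c^Y_i)\right),
\]
the maximum over all pairs of cut vectors $(c^X,c^Y)$.

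Next I rescale. Set $c^X_i=\lfloor ns_i\rfloor$, $c^Y_i=\lfloor nr_i\rfloor$, and let $H_i^Z(t)=n^{-1/2}\bigl(N_i^Z(0,\lfloor nt\rfloor)-nt/m\bigr)$ be the centered, rescaled counting processes. Since $N_i^Z(c^Z_{i-1},c^Z_i)=n(s_i-s_{i-1})/m+\sqrt n\,(H_i^Z(s_i)-H_i^Z(s_{i-1}))$ (with the $r_i$'s in place of the $s_i$'s for $Z=Y$) and $\sum_i(s_i-s_{i-1})=1$, introducing $\delta_i=\sqrt n\,(r_i-s_i)$, so $\delta_0=\delta_m=0$, yields (up to negligible rounding errors)
\[
\frac{\lcin-n/m}{\sqrt n}=\max_{s,\delta}\ \sum_{i=1}^m\min\!\left(H_i^X(s_i)-H_i^X(s_{i-1}),\ \frac{\delta_i-\delta_{i-1}}{m}+H_i^Y(r_i)-H_i^Y(r_{i-1})\right).
\]
One checks that the maximizing $\delta$ may be taken in a bounded region: if $\|\delta\|_\infty$ is large then, as $\sum_i(\delta_i-\delta_{i-1})=0$, some increment $\delta_i-\delta_{i-1}$ is very negative, which forces the corresponding summand, hence the whole sum, to be very negative, whereas the maximum is $O_{\bbp}(1)$ (it is bounded below by its value at zero cut shifts, which is tight). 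Since $\sup_{i,t}\bigl(|H_i^X(t)|+|H_i^Y(t)|\bigr)$ is tight, one may restrict $(s,\delta)$ to a compact set at the cost of an $o_{\bbp}(1)$; on that set $r_i-s_i=\delta_i/\sqrt n\to 0$, so $H_i^Y(r_i)-H_i^Y(r_{i-1})$ can be replaced by $H_i^Y(s_i)-H_i^Y(s_{i-1})$ with uniformly vanishing error.

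For the weak limit: the vectors $(\ind_{X_j=\aa_1},\dots,\ind_{X_j=\aa_m})$, $j\ge1$, are i.i.d.\ with covariance matrix $\frac1m\bigl(I_m-\frac1m J_m\bigr)$ ($J_m$ the all-ones matrix), and $X\indep Y$, so by the multidimensional invariance principle $(H^X,H^Y)$ converges in $C([0,1],\bbR^m)^2$ to two independent $m$-dimensional Brownian motions with that covariance, which one may represent as $\bigl(B^{(i)}_k(t)-\frac1m\sum_{l=1}^m B^{(l)}_k(t)\bigr)_{i=1,\dots,m}$, $k=1,2$, with $B_1,B_2$ independent standard $m$-dimensional Brownian motions. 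The functional left at the end of the previous step — a maximum over a compact set of $(s,\delta)$ of an expression jointly continuous in the paths and the parameters — is continuous for the uniform norm, so the continuous mapping theorem transfers the convergence. Letting the compact parameter set grow and optimizing over $\delta$, one invokes the elementary identity $\max\bigl\{\sum_i\min(a_i,\beta_i+b_i):\sum_i\beta_i=0\bigr\}=\min\bigl(\sum_i a_i,\sum_i b_i\bigr)$ to collapse $\sum_i\min(\cdot,\cdot)$ into $\min(\sum_i\cdot,\sum_i\cdot)$, and the telescoping $\sum_i\bigl[(B^{(i)}_k(t_i)-\frac1m\sum_l B^{(l)}_k(t_i))-(B^{(i)}_k(t_{i-1})-\frac1m\sum_l B^{(l)}_k(t_{i-1}))\bigr]=-\frac1m\sum_i B^{(i)}_k(1)+\sum_i\bigl(B^{(i)}_k(t_i)-B^{(i)}_k(t_{i-1})\bigr)$ to recognize the right-hand side of the limit in Theorem~\ref{thm1.1}; the overall factor $\sqrt m$ coming from $\sqrt{n/m}$ versus $\sqrt n$ cancels exactly the $1/\sqrt m$ produced by the covariance.

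The main obstacle is the uniform control in the second step, namely showing that, simultaneously for all $n$, cut vectors whose $X$- and $Y$-parts differ at macroscopic scale (i.e.\ $\|\delta\|_\infty$ large) contribute negligibly, together with the attendant interchange of the limits $n\to\infty$ and (compact set)$\to$(full parameter set) in the continuous mapping step; this rests on tightness of $\sup_{i,t}|H_i^Z(t)|$ and on the monotonicity and Lipschitz structure of the functional. By contrast, the combinatorial reduction and the invariance principle are routine, and degenerate cuts $t_i=t_{i+1}$ cause no difficulty, being dealt with by continuity and density.
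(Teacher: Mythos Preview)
Your approach is correct and genuinely different from the paper's. The paper does \emph{not} use the cut-point representation
\[
\lcin=\max_{c^X,c^Y}\sum_{i=1}^m\min\bigl(N_i^X(c^X_{i-1},c^X_i),\,N_i^Y(c^Y_{i-1},c^Y_i)\bigr)
\]
(it mentions the equivalent last-passage formulation only in the concluding remarks). Instead it fixes the number $k_i$ of $\aa_i$'s, $i\le m-1$, locates them greedily, and counts the remaining $\aa_m$'s; this produces a max/min over a \emph{random} constraint set $\Ccal_n$ involving the random variables $N_i^*$ (the number of $\aa_i$'s consumed while collecting the earlier letters). Donsker is then applied not to the empirical indicator processes $H_i^Z$ but to the inter-arrival counts $N_m^{T_i^{j-1},T_i^j}$, whose iid structure must first be established. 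The bulk of the paper's work (Sections~4.1--4.3) is the derandomization of the random bounds $N_i$, of the random shifts $N_i^*$ inside the partial sums, and of the random constraint set $\Ccal_n$; this requires exponential tail bounds for $N_i^*$ uniformly over the $k_j$'s and a sandwiching of $\Ccal_n$ between deterministic sets $\Ccal_n^{\pm}$.

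Your route is more elementary: the combinatorics are one line, Donsker applies directly to the multinomial empirical processes, and the only nontrivial step is the compactness of the optimal $\delta=\sqrt n\,(r-s)$ and the attendant interchange of limits, which you correctly identify as the crux. (Your identity $\max_{\sum\beta_i=0}\sum_i\min(a_i,\beta_i+b_i)=\min(\sum a_i,\sum b_i)$ is what replaces the paper's final linear change of Brownian coordinates.) What the paper's longer route buys is a finer decomposition of the fluctuations in terms of the ``between-occurrences'' variables, which it uses to read off the covariance $\Sigma$ and to set up the non-uniform case \eqref{eq00max}; your approach would also handle the non-uniform case, but the localization of the optimal $s$ near the letters of maximal probability would have to be argued separately. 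Two small points to tighten in your sketch: when $\delta$ is confined to a compact set, the constraint $r=s+\delta/\sqrt n\in\mathcal W_m(1)$ is not automatic on $\partial\mathcal W_m(1)$, so either extend $H_i^Y$ continuously outside $[0,1]$ before invoking the modulus of continuity, or note that only an upper bound is needed there; and make explicit that the limiting optimal $\delta$ is a.s.\ finite (it is, since the limiting paths are bounded), so enlarging $K$ eventually captures the full supremum and the $\epsilon/3$ interchange goes through.
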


The main motivation for our work has its origins in the identification, first 
obtained by Kerov~\cite{kerov}, of the limiting length 
(properly centered and scaled) 
of the longest increasing subsequence of a random word, as the maximal 
eigenvalue of a certain Gaussian random matrix.  When combined with 
results of Baryshnikov~\cite{Bar} or 
Gravner, Tracy and Widom~\cite{GTW} (see also \cite{BGH}), this limiting law 
has a representation as a Brownian functional. 
Moreover, 
the longest increasing subsequence corresponds to the first row of the RSK Young diagrams 
associated with the random word 
and \cite[Chap. 3, Sec. 3.4, Theorem 2]{kerov} showed that 
the whole normalized limiting shape of these RSK Young diagrams is the spectrum of 
the traceless Gaussian Unitary Ensemble (GUE).  
Since the length of the top row of the diagrams is 
the length of the longest increasing 
subsequence of the random word, the maximal eigenvalue result is recovered.  
(The asymptotic length result was rediscovered by Tracy and 
Widom~\cite{TW} and the asymptotic shape one 
by Johansson~\cite{KJAOM2001}.  
Extensions to non-uniform letters were also obtained    
by Its, Tracy and Widom~\cite{ITW1, ITW2}.) 
Another motivation for the present study comes from the interpretation of 
the $\lcin$ functional in terms of last passage time in directed percolation. 
This is detailed in our concluding remarks. 

The asymptotic behavior of the length of the longest common and increasing subsequences has actually already been investigated for binary words ($m=2$) in \cite{HLM}.
However, the methods used there do not allow to consider an alphabet of arbitrary finite size $m$. 
When $m=2$ with letters $\aa_1$ and $\aa_2$, it is enough to consider common subsequences made of a random number of common $\aa_1$'s deterministically completed by the common $\aa_2$'s, so that in a way the corresponding study is reduced to deal with only one type of letter. In contrast, when $m\geq 3$, the situation is much more complicated since a similar strategy reduced the problem to $m-1$ types of letter for which there is still, roughly speaking, too much randomness to successfully handle, in this way, the study of $\lcin$.
A new methodology based on a new representation of $\lcin$ is thus required to deal with general finite alphabet of size $m$. 
This is achieved below where an appropriate representation of $\lcin$,  that allows to investigate its asymptotic behavior for arbitrary $m\geq2$, is obtained. 
Our results thus extend and encompass the binary $\lcin$ result of \cite{HLM}. 
The dependence (or independence) structure between the two sequences  of letters $X$ and $Y$ is carried over at the limit into a similar structure between the two standard Brownian motions $B_1$ and $B_2$.  
Hence, when $X=Y$, our results recover, with the help of \cite{BGH}, the weak limits obtained in \cite{kerov}, \cite{KJAOM2001}, \cite{TW}, \cite{ITW1}, \cite{ITW2}, \cite{HL}, and \cite{HX}, while if $X$ and $Y$ are independent so are $B_1$ and $B_2$.   
As a by-product of our approach, we further fix some loose points present in \cite{HLM}. 
As suggested to us, let us further put our main theorem in context.  
At first, for  
$m=2$, the right hand-side of \eqref{eq00} becomes 
$$\max_{0\le t\le 1}\min\!\left(\!\frac{B_1^{(2)}(1)-B_1^{(1)}(1)}{2}-(B_1^{(2)}(t)-B_1^{(1)}(t)),
\frac{B_2^{(2)}(1)-B_2^{(1)}(1)}{2}-(B_2^{(2)}(t)-B_2^{(1)}(t))
\!\!\right)\!.$$
In case the two-dimensional standard Brownian motions are independent, 
this last expression has the same law as   
$$
{\sqrt 2}\max_{0\le t\le 1}\min\!\left(B_1(t) -\frac{1}{2}B_1(1), 
B_2(t) -\frac{1}{2}B_2(1)
\right),
$$
where, now, $B_1$ and $B_2$ are two independent one-dimensional standard Brownian motions on $[0,1]$. 
Therefore, our limiting result matches the binary one presented in \cite{HLM}.  

Next, and still for further context, let us compare the asymptotic  behavior of $\lcin$ to the one of, say, $L_n$,  
the length of the optimal alignments which align only one type of letters.  
(In case of a single word, $L_n$ could correspond to, e.g, the length of the 
longest constant subsequences).  Clearly, $\lcin \ge {\rm L}_n$ and under a uniform 
assumption, 
\begin{equation}
\lim_{n\to +\infty}\frac{\lcin}{n} = \lim_{n\to +\infty}\frac{{\rm L}_n}{n} = \frac{1}{m}, 
\end{equation}  
with probability one.  Moreover, it is easy to see that, as $n\to +\infty$,  
\begin{equation}\label{lconst}
\frac{{\rm L}_n - n/m}{\sqrt{n/m}} \Longrightarrow \min\left(\sqrt{1-\frac{1}{m}}B_1(1),  
\sqrt{1-\frac{1}{m}}B_2(1)\right),   
\end{equation}  
for, say, two one-dimensional standard Brownian motions $B_1$ and $B_2$.   
Now returning to \eqref{eq00}, note that for $j=1,2$,  
\begin{eqnarray}
&&\!\!\!\!\!\!\!\!-\frac{1}{m}\sum_{i=1}^{m}B^{(i)}_j(1)
+ \sum_{i=1}^m (B^{(i)}_j(t_i)-B^{(i)}_j(t_{i-1})) \nonumber\\
&&\!\!\! =\frac{1}{m}\left(\!(m-1)B^{(m)}_j(1) - \sum_{i=1}^{m-1}B^{(i)}_j(1)\!\right) 
+ \sum_{i=1}^{m-1} (B^{(i)}_j(t_i)-B^{(i)}_j(t_{i-1})) - B^{(m)}_j(t_{m-1}), 
\end{eqnarray}
where the random variable $\big((m-1)B^{(m)}_j(1) - \sum_{i=1}^{m-1}B^{(i)}_j(1)\big)/m$ 
has exactly the same law as $\sqrt{1-1/m}B_j(1)$. 
Therefore, the presence of the extra terms involving the $t_i's$ on the right hand-side of \eqref{eq00} allows to distinguish the renormalized limit of $\lcin$ from that of ${\rm L}_n$ and ensures that the latter limit is still almost surely dominated by the former.  
This observation should be contrasted with the non-uniform case where a single letter is attained with maximal probability $p_{\max}$, and where ${\rm L}_n$ aligns this letter.   
Indeed, in view of \eqref{eq00max} below, when centered by $np_{\max}$ and scaled by 
$\sqrt{np_{\max}}$, both $\lcin$ and ${\rm L}_n$ converge to $\min(\sqrt{1-p_{\max}}B_1(1), \sqrt{1-p_{\max}}B_2(1))$. 

A natural question arising from this study is the random matrix interpretation of our limiting distribution \eqref{eq00}. 
Another natural question is to interpret $\lcin$ in terms of RSK Young 
diagrams and to investigate, more generally, the shape of a RSK counterpart of $\lcin$. 
Both questions go actually far beyond the scope of this paper but will be the subject of forthcoming investigations.

\medskip

As for the content of the paper, the next 
section (Section \ref{sec:combinatorics}) establishes a 
pathwise representation for the length of the longest common and increasing 
subsequence of the two words as a max/min functional.  
In Section~\ref{sec:probability}, the probabilistic framework is 
initiated, the representation becomes 
the maximum over a random set of the minimum of random sums of randomly 
stopped random variables.  The various random variables involved are studied 
and their (conditional) laws found. 
In Section \ref{sec:Uniform}, the limiting law is obtained.  This is done in part by a 
derandomization procedure (of the random sums and of the random constraints) leading 
to the Brownian functional \eqref{eq00} of Theorem~\ref{thm1.1}.  
In the last section (Section~\ref{sec:concluding}), various extensions 
and generalizations are discussed as well as some open questions related to this problem.  
Finally, Appendix~\ref{sec:Appendix_lemma} completes the proof of some technical results and while Appendix~\ref{sec:appendix_HLM} gives missing steps in the proof of the main theorem in \cite{HLM} as well as corrections to arguments presented there; providing, in the much simpler binary case, a rather self-contained proof.


\section{Combinatorics}
\label{sec:combinatorics}

The aim of this section is to obtain a pathwise representation for the length of the longest common and increasing subsequences of two finite words.  
Throughout the paper, $X=(X_i)_{i\ge 1}$ and 
$Y=(Y_i)_{i\ge 1}$ are two infinite sequences whose coordinates take 
their values in $\cala_m=\{\aa_1<\aa_2< \cdots <\aa_m\}$, a finite 
totally ordered alphabet of cardinality $m$.   
Recall next that $\lcin$ is the maximal integer $k\in \{1,\dots, n\}$, such
that there exist $1\le i_1< \cdots < i_k\le n$ and $1\le j_1<\cdots <j_k\le n$,
satisfying the following two conditions:

\begin{itemize}
\item[(i)]  $X_{i_s}=Y_{j_s}$, for all $s=1,2,\dots, k$,
\item[(ii)] $X_{i_1}\le X_{i_2}\le \cdots \le X_{i_k}$ and $Y_{j_1}\le Y_{j_2}
\le \cdots \le Y_{j_k}$. 
\end{itemize}

Now that $\lcin$ has been formally defined, let us set some standing 
notation. Let $N_r(X)$, $r=1,\dots, m$, be the number of $\aa_rs$ 
in $X_1,X_2,\dots, X_n$, i.e., 
\begin{align}
\label{nrx}
N_r(X)&=\#\big\{i=1,\dots, n: X_i=\aa_r\big\} =\sum^n_{i=1}\ind_{\{X_i=\alpha_r\}}, 
\end{align}
and similarly let $N_r(Y)$, $r=1,\dots, m$, be the 
number of $\aa_rs$ in $Y_1, Y_2, \dots, Y_n$. Clearly, 
\begin{equation*}
\label{sumnrxy}
\sum^m_{r=1}N_r(X)=\sum^m_{r=1}N_r(Y)=n.
\end{equation*}

\medskip\noindent
Let us further set a convention:  
{\it Throughout the paper when there is no ambiguity or when a property is valid 
for both sequences $X=(X_i)_{i\ge 1}$ and $Y=(Y_i)_{i\ge 1}$ we often omit 
the symbol $X$ or $Y$ and, e.g., write $N_r$ for either $N_r(X)$ or $N_r(Y)$ 
or, below, $H$ for either $H_X$ or $H_Y$.}

\medskip
Continuing on our notational path, for each $r=1,\dots, m$, let $N^{s,t}_r(X)$ be the 
number of $\aa_rs$ in $X_{s+1},X_{s+2},\dots, X_t$, i.e., 
\begin{equation}
\label{eq4}
N^{s,t}_r(X)=\#\big\{i=s+1,\dots, t:X_i=\aa_r\big\}
=\sum_{i=s+1}^{t}\ind_{\{X_i=\aa_r\}},
\end{equation}
with a similar definition for $N^{s,t}_r(Y)$. Again, it is trivially verified that 
\begin{equation*}
\label{eq5}
\sum^m_{r=1}N^{s,t}_r(X)=\sum^m_{r=1}N^{s,t}_r(Y)=t-s,
\end{equation*}
and, of course, $N^{0,n}_r=N_r$. Still continuing with our 
notations, let $T^j_r(X)$, $r=1,\dots, m$, be the location 
of the $j^{\mbox{\footnotesize th}}$ $\aa_r$ in the {\it infinite} 
sequence $X_1,X_2,\dots, X_n,\dots$, with the convention that $T_r^0(X)=0$.  
Then, for $j=1,2,\dots$, $T^j_r(X)$ 
can be defined recursively via, 
\begin{equation}
\label{eq:Tj}
T^{j}_r(X)=\min \big\{s\in \bbn: s>T^{j-1}_r(X),  X_s=\aa_r\big\}
\end{equation}
where as usual $\nit=\{0,1,2,\dots\}$. Again replacing $X$ by $Y$ gives the 
corresponding notion for the sequence $Y=(Y_i)_{i\ge 1}$.

Next, let us begin our finding of a representation for $\lcin$ via the random 
variables defined to date. First, let $H_X(k_1,k_2,\dots, k_{m-1})$ be the maximal 
number of $\aa_ms$ contained in an 
increasing subsequence, of $X_1 X_2\cdots X_n$, 
containing $k_1$ $\aa_1s$, $k_2$ $\aa_2s$, $\dots$, $k_{m-1}$ $\aa_{m-1}s$ picked in that order.  
Replacing $X=(X_i)_{i\ge 1}$ by 
$Y=(Y_i)_{i\ge 1}$, it is then clear that   
\begin{equation}
\label{eq6}
\min\Big(k_1+\cdots +k_{m-1}+H_X(k_1,\dots, k_{m-1}),
k_1+\cdots + k_{m-1}+H_Y(k_1,\dots, k_{m-1})\Big),
\end{equation}
is, therefore, the length of the longest common and 
increasing subsequence of $X_1 X_2\cdots X_n$ and $Y_1Y_2\cdots Y_n$ containing 
exactly $k_r$ $\aa_rs$, for all $r=1,2,\dots, m-1$, 
the letters being picked in an increasing order.  
Hence, to find $\lcin$, the function $H$ needs to be 
identified and \eqref{eq6} needs to be maximized over all 
possible choices of $k_1,k_2,\dots, k_{m-1}$.

Let us start with the maximizing constraints. Assume, for a while, that a single 
word, say, $X_1 \cdots X_n$, is considered. 
First, and clearly, $0\le k_1\le N_1$. Next, $k_2$ is the number of $\aa_2s$ 
present in the sequence after the $k_1^{\mbox{\footnotesize{th}}}$ $\aa_1$.  
Any letter $\aa_2$ is admissible but the ones occurring 
before the $k_1^{\mbox{\footnotesize{th}}}$ $\aa_1$, attained at the location 
$T_1^{k_1}\wedge n$. 
Since there are $n$ letters, considered so far, there are 
thus $N_2^{0, T_1^{k_1}\wedge n}$ inadmissible $\aa_2 s$ and the 
requirement on $k_2$ writes $k_2\leq  N_2-N_2^{0, T_1^{k_1}\wedge n}$. 
Similarly for each $r=3,\dots, m-1$, $k_r$ is the number of 
letters $\aa_r$ minus the inadmissible $\aa_r s$ which occur 
during the recuperation, of the $k_1$ $\aa_1 s$, followed by the $k_2$ $\aa_2 s$, 
followed by the $k_3$ $\aa_3 s$, {\it etc} in that order. 
Thus the requirement on $k_r$ is of the form $k_r\leq N_r-\widetilde N_r^*$, 
where $\widetilde N_r^*$ is the number of $\aa_r s$ occurring 
before the $k_i$ $\aa_i s$, $i\leq r-1$, picked in the order just described. 
For $r=1,2$, and as already shown, $\widetilde N_1^*=0$ and 
$\widetilde N_2^*=N_2^{0, T_1^{k_1}\wedge n}$. 
Assume next that, for $r\ge 3$, $\widetilde N_{r-1}^*$ is well 
defined, then $\widetilde N_r^*$ is the number of $\aa_r s$ occurring 
before, in that order, the $k_1$ $\aa_1 s, \dots,$ the $k_{r-1}$ $\aa_{r-1} s$. 
A little moment of reflection makes it clear that the location 
of the $k_{r-1}^{\mbox{\footnotesize{th}}}$ 
such $\aa_{r-1}$ is $T_{r-1}^{k_{r-1}+ \widetilde N_{r-1}^*}$, from which 
it recursively follows that:   
\begin{equation*}
\label{eq:tildeNr*}
\widetilde N_r^*=N_r^{0,T_{r-1}^{k_{r-1}+\widetilde N_{r-1}^*}\wedge n}. 
\end{equation*}
 
\begin{rem}
\label{rem:depk}
{\rm 
Note that $\widetilde N_r^*$ as well as $N_r^*$ defined below in \eqref{eq:Nr*} actually depend on $k_1, \dots, k_{r-1}$, but in order to not overload our notation we will omit this dependency thereafter.
}
\end{rem}

\medskip\noindent
Returning to two sequences $X_1, \dots, X_n$ and $Y_1, \dots, Y_n$, the condition on $k_r$, $1\leq r\leq m-1$, writes as
\begin{equation*}
\label{eq7}
0\le k_r\le\left(N_r(X) - \widetilde N_r^*(X)\right)\wedge\left(N_r(Y) 
- \widetilde N_r^*(Y)\right).  
\end{equation*}
From these choices of indices and \eqref{eq6}, 
\begin{equation}
\label{eq8}
\lcin=\max_{\widetilde\Ccal_n}
\min\left( \sum^{m-1}_{i=1}k_i+H_X(k_1,\dots, k_{m-1}),
\sum^{m-1}_{i=1}k_i+H_Y(k_1,\dots, k_{m-1})\right), 
\end{equation}
where the outer maximum is taken over $(k_1, \dots, k_{m-1})$ in 
\begin{equation}
\label{eq8Ctilde}
\widetilde\Ccal_n=\Big\{(k_1, \dots, k_{m-1}) : k_1\in \widetilde\Ccal_{n,1}, k_2\in \widetilde\Ccal_{n,2}(k_1), k_3\in \widetilde\Ccal_{n,3}(k_1, k_2), k_{m-1}\in \widetilde\Ccal_{n,m-1}(k_1, \dots, k_{m-2})\Big\},
\end{equation}
where $\widetilde\Ccal_{n,1}= \Big\{ 0\le k_1\le\big(N_1(X)-\widetilde N_1^*(X)\big)\bwedge \big(N_1(Y)-\widetilde N_1^*(Y)\big)\Big\}$ and for $i=2, \dots, m-1$,
\begin{equation}
\label{eq8Citilde}
\widetilde\Ccal_{n,i}{(k_1,\dots, k_{i-1})}= \Big\{ 0\le k_i\le\big(N_i(X)-\widetilde N_i^*(X)\big)\bwedge
\big(N_i(Y)-\widetilde N_i^*(Y)\big)\Big\}.
\end{equation}

Next, observe that if $T_{r-1}^{k_{r-1}+ \widetilde N_{r-1}^*}>n$, 
then $N_r - \widetilde N_r^* = 0$.  Also, since the above maximum does not 
change under vacuous constraints, one can replace in the defining 
constraints, $\widetilde N_r^*$ by $N_r^*$ recursively 
given via:  $N_1^*=0$ and for $r=2,\dots, m-1$,
\begin{equation}
\label{eq:Nr*}
N_r^*=N_r^{0,T_{r-1}^{k_{r-1}+N_{r-1}^*}}. 
\end{equation}
The combinatorial expression \eqref{eq8} then becomes
\begin{equation*}
\label{eq9}
\lcin=\max_{\Ccal_n}\min\left(\sum^{m-1}_{i=1}k_i+H_X(k_1,\dots, k_{m-1}),
\sum^{m-1}_{i=1}k_i+H_Y(k_1,\dots, k_{m-1})\right),
\end{equation*}
where the outer maximum is taken over $(k_1, \dots, k_{m-1})$ in $\Ccal_n$ with $\Ccal_n$ and $\Ccal_{n,i}$, $i=1, \dots, m-1$, respectively defined as in \eqref{eq8Ctilde} and in \eqref{eq8Citilde} but with $\widetilde N_i^*$ replaced by $N_i^*$, $i=1, \dots, m-1$. 
and, of course, 
\begin{equation*}
\label{eq10bis}
\sum_{i=1}^mN_i(X)=\sum_{i=1}^mN_i(Y)=n.  
\end{equation*}

After this identification, recall that $H$ is the maximal number 
of $\aa_m$ after, in that order, the $k_1$ $\aa_1 s$, $k_2$ $\aa_2 s$, 
$\dots$, $k_{m-1}$ $\aa_{m-1} s$.  
Counting the $\aa_ms$ present between the various locations 
of the $\aa_i$, $i=1,\dots, m-1$, and after another moment of reflection, 
it is clear that  
$$
H=N_m-R,
$$
where
\begin{equation}
\label{eq11}
R=
\sum^{m-1}_{i=1}\sum^{N^*_i+k_i}_{j=N^*_i+1} N^{T^{j-1}_i,T^j_i}_m
-\sum_{i=1}^{m-1}N_m^{T_i^{N_i^*}, T_{i-1}^{N_{i-1}^*+k_{i-1}}}
:=R_1-R_2,
\end{equation}
where the $N^*_i$ are given by \eqref{eq:Nr*} and where
$$
R_1=\sum^{m-1}_{i=1}\sum^{N^*_i+k_i}_{j=N^*_i+1} N^{T^{j-1}_i,T^j_i}_m
=\sum_{i=1}^{m-1} N_m^{T_i^{N_i^*}, T_i^{N_i^*+k_i}},
$$
while 
$$
R_2=\sum_{i=1}^{m-1}N_m^{T_i^{N_i^*}, T_{i-1}^{N_{i-1}^*+k_{i-1}}},
$$
and therefore as expected $R=N_m^{0, T_{m-1}^{N_{m-1}^*+k_{m-1}}}$.  
Recall also that according to Remark~\ref{rem:depk}, $R$ actually depends  on $k_1, \dots, k_{m-1}$, but that for the sake of readability this dependency is omitted from our notations.
Summarizing our results leads so far to:  
\begin{thm}
\label{thm2.1}
Let $X=(X_i)_{i\ge 1}$ and $Y=(Y_i)_{i\ge 1}$ be two sequences whose coordinates take their values in $\cala_m=\{\aa_1<\aa_2< \cdots <\aa_m\}$, a totally ordered finite alphabet of cardinality~$m$. 
Let $\lcin$ be the length of the longest common and increasing subsequences of $X_1 \cdots X_n$ and $Y_1 \cdots Y_n$. 
Then,  
\begin{equation}
\label{eq13}
\lcin=\max_{\Ccal_n}\min\left(
\sum^{m-1}_{i=1}k_i+N_m(X)-R(X), \sum^{m-1}_{i=1}k_i+N_m(Y)-R(Y)\right),
\end{equation}
where the outer maximum is taken over $(k_1, \dots, k_{m-1})$ in 
\begin{equation}
\label{eq8C}
\Ccal_n=\Big\{(k_1, \dots, k_{m-1}) : k_1\in \Ccal_{n,1}, k_2\in \Ccal_{n,2}(k_1), k_3\in \Ccal_{n,3}(k_1, k_2), k_{m-1}\in\Ccal_{n,m-1}(k_1, \dots, k_{m-2})\Big\},
\end{equation}
where $\Ccal_{n,1}= \Big\{0\le k_1\le\big(N_1(X)-N_1^*(X)\big)\bwedge \big(N_1(Y)-N_1^*(Y)\big)\Big\}$ and for $i=2, \dots, m-1$,
\begin{equation}
\label{eq8Ci}
\Ccal_{n,i}{(k_1,\dots, k_{i-1})}= \Big\{ k=(k_1, \dots, k_{n-1})\ :\ 0\le k_i\le\big(N_i(X)-N_i^*(X)\big)\bwedge \big(N_i(Y)-N_i^*(Y)\big)\Big\},
\end{equation}
and where
$$
R=\sum^{m-1}_{i=1}\sum^{N^*_i+k_i}_{j=N^*_i+1}N^{T^{j-1}_i,T^j_i}_m
-\sum_{i=1}^{m-1}N_m^{T_i^{N_i^*}, T_{i-1}^{N_{i-1}^*+k_{i-1}}},
$$
with the various $N$'s and $T$'s given above by \eqref{nrx}, \eqref{eq4}, \eqref{eq:Tj} and \eqref{eq:Nr*}.
\end{thm}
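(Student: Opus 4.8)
The plan is to assemble into a single formula the three ingredients prepared in the discussion preceding the statement: the expression \eqref{eq6} for a fixed letter profile, the description of the admissible profiles, and the identity $H=N_m-R$. First I would fix a target profile $(k_1,\dots,k_{m-1})$ and check that \eqref{eq6} is exactly the length of the longest common and increasing subsequence of $X_1\cdots X_n$ and $Y_1\cdots Y_n$ using precisely $k_r$ letters $\aa_r$ for $r=1,\dots,m-1$: any such common increasing subsequence is necessarily the word $w_\ell=\aa_1^{k_1}\cdots\aa_{m-1}^{k_{m-1}}\aa_m^{\ell}$ (the weak monotonicity forces equal letters to be grouped), $w_\ell$ embeds into $X_1\cdots X_n$ iff $\ell\le H_X(k_1,\dots,k_{m-1})$ and into $Y_1\cdots Y_n$ iff $\ell\le H_Y(k_1,\dots,k_{m-1})$, so the largest admissible $\ell$ is $H_X\wedge H_Y$ and the length equals $\sum_{i=1}^{m-1}k_i+(H_X\wedge H_Y)$, which is \eqref{eq6}.

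Next I would pin down which profiles are worth considering, namely those for which $\aa_1^{k_1}\cdots\aa_{m-1}^{k_{m-1}}$ embeds into $X_1\cdots X_n$ and into $Y_1\cdots Y_n$. A leftmost greedy embedding argument --- such an embedding succeeds whenever any embedding does, and it is also the one realizing $H_X$ --- gives, by induction on $r$, that $\aa_1^{k_1}\cdots\aa_r^{k_r}$ embeds into $X_1\cdots X_n$ iff $k_j\le N_j(X)-\widetilde N_j^*(X)$ for all $j\le r$, where $\widetilde N_j^*(X)$ counts the $\aa_j$'s preceding the last $\aa_{j-1}$ of that greedy embedding and obeys the stated recursion; intersecting the conditions for $X$ and for $Y$ gives $\widetilde\Ccal_n$, hence \eqref{eq8}. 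The passage to $\Ccal_n$, i.e.\ the replacement of $\widetilde N_j^*$ by $N_j^*$, is then justified by observing that the constraints which actually change are precisely those arising when a greedy location exceeds $n$; these force the corresponding $k_r$ --- and hence the summands it feeds into $R$ and into $\sum_{i=1}^{m-1}k_i$ --- to vanish, so that the maximum is unaffected.

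It remains to establish $H=N_m-R$ and to substitute it into \eqref{eq6}. In the greedy embedding attached to $(k_1,\dots,k_{m-1})$ the chosen $\aa_i$'s are the copies numbered $N_i^*+1,\dots,N_i^*+k_i$, so the last letter used sits at $P=T_{m-1}^{N_{m-1}^*+k_{m-1}}$ and $H$ is simply the number of $\aa_m$'s strictly to the right of $P$, that is $H=N_m-N_m^{0,P}$; cutting $(0,P]$ at the successive locations of the chosen letters and telescoping then identifies $N_m^{0,P}$ with the double sum $R$ of \eqref{eq11}, and feeding $H=N_m-R$ into \eqref{eq6} and maximizing over the admissible set of the previous step yields \eqref{eq13}. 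The step I expect to be the main obstacle is exactly this last bit of bookkeeping: one must verify that the gap intervals attached to consecutive letter types --- whose endpoints are dictated by the recursion defining $N_i^*$ --- contain no common occurrence of $\aa_m$, so that the telescoping is exact. If at some profile this only yields $R\ge N_m-H$, hence $N_m-R\le H$ there, one argues instead that the right-hand side of \eqref{eq13} is then a pointwise lower bound which never exceeds $\lcin$, and that equality is attained at the profile of a suitably chosen optimal common increasing subsequence, which suffices since \eqref{eq13} is a maximum.
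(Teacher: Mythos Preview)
Your three-step plan --- verify \eqref{eq6} for a fixed profile, delimit the admissible profiles via the greedy (leftmost) embedding, then substitute $H=N_m-R$ --- is exactly the derivation carried out in Section~\ref{sec:combinatorics}; the paper follows the same route, including the replacement of $\widetilde N_i^*$ by $N_i^*$ through the ``vacuous constraints'' observation you sketch. You are also right to single out the telescoping behind $H=N_m-R$ as the delicate step: the paper handles it in one line (``after another moment of reflection''), whereas you notice that the blocks attached to consecutive letter types need not abut exactly.

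That concern is genuine, and your proposed repair does not close it. The $\aa_i$-block covers $(T_i^{N_i^*},T_i^{N_i^*+k_i}]$, while the first interval of the $\aa_{i+1}$-block is $(T_{i+1}^{N_{i+1}^*},T_{i+1}^{N_{i+1}^*+1}]$; since $T_{i+1}^{N_{i+1}^*}$ is the last $\aa_{i+1}$ at or before $T_i^{N_i^*+k_i}$, these overlap on $(T_{i+1}^{N_{i+1}^*},T_i^{N_i^*+k_i}]$, an interval that contains no $\aa_{i+1}$'s but may contain $\aa_m$'s, which then get counted twice in $R$. So one only gets $N_m-R\le H$, as you say. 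But your fallback --- equality at the profile of an optimal subsequence --- fails too: with $m=3$, $n=3$ and $X=Y=\aa_3\aa_1\aa_2$ one has $\lcin=2$, the unique optimal profile is $(k_1,k_2)=(1,1)$, yet $N_2^*=0$ forces $R=N_3^{0,2}+N_3^{0,3}=2$ and hence $k_1+k_2+N_3-R=1$; every other profile also gives $1$, so the right-hand side of \eqref{eq13} equals $1\ne 2$. Thus the identity \eqref{eq13} is not literally correct as stated, and no argument along your lines can establish it. To make the telescoping exact one must replace, in each inner sum, the left endpoint $T_i^{N_i^*}$ of the first interval by $T_{i-1}^{N_{i-1}^*+k_{i-1}}$. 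This discrepancy is immaterial for Theorem~\ref{thm1.1}, since each overlap lies inside a single $\aa_{i+1}$-gap and the resulting overcount in $R$ is $o_\pp(\sqrt n)$ uniformly over $\Ccal_n$, which is swallowed by the later analysis; but it is a real obstruction to proving the combinatorial statement as written.
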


The representation \eqref{eq13} has the great advantage of 
(essentially) only involving the quantities $N_i$, $N_i^*$, $i=1, 2, \dots, m-1$ 
and $T^j_i$, $i=1,2,\dots, m-1$, $j=1, 2, \dots$, and $N_m$.


\section{Probability}
\label{sec:probability}

Let us now bring our probabilistic framework into the picture by first studying the random variables $N^{T^{j-1}_i,T^j_i}_m$, $i=1,2,\dots, m-1$ and $j=1,2,\dots$ and then the random variables $N_i^*$, $i=1,2,\dots, m-1$, appearing in $R$ in \eqref{eq11}.

\begin{prop}
\label{lemind} 
Let $(Z_n)_{n\ge 1}$ be a sequence of iid random variables 
with $\bbp (Z_1=\aa_i)=p_i$, $i=1,\dots,m$.  
For each $i=1, 2, \dots, m$, let $T^0_i=0$, and let $T^j_i$, $j=1, 2, \dots$ be 
the location of the $j^{\mbox{\footnotesize{th}}}$ $\aa_i$ in the infinite 
sequence $(Z_n)_{n\ge 1}$.  Let $i, r\in \{1, \dots, m\}$, with $r\neq i$.  Then, 
for any $j=1, 2,\dots$, the conditional law 
of $N^{T^{j-1}_i,T^j_i}_r$ 
given $(T^{j-1}_i,T^j_i)$, is binomial 
with parameters $T^j_i-T^{j-1}_i-1$ and $p_r/(1-p_i)$, which we 
denote by ${\cal B}\big(T^j_i-T^{j-1}_i-1,p_r/(1-p_i)\big)$.  
Moreover, the conditional law  of $\big(N^{T^{j-1}_i,T^j_i}_r\big)_{r=1, \dots, m, r\neq i}$ 
given $(T^{j-1}_i,T^j_i)$, is multinomial with 
parameters $T^j_i-T^{j-1}_i-1$ and $(p_r/(1-p_i))_{r=1, \dots, m, r\neq i}$, which we 
denote by ${\cal M}{\bf{\it ul}}\big(T^j_i-T^{j-1}_i-1,(p_r/(1-p_i))_{r=1, 
\dots, m, r\neq i}\big)$.
Finally, for each $i\not=r$, the random variables $\big(N^{T^{j-1}_i,T^j_i}_r\big)_{j\ge 1}$,  
are independent with mean $p_r/p_i$ and variance $(p_r/p_i)(1+p_r/p_i)$; and, moreover,   
they are identically distributed in case the 
$(Z_n)_{n\ge 1}$, are uniformly distributed.
\end{prop}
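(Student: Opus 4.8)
The plan is to work conditionally on the sequence of inter-arrival gaps for the letter $\aa_i$, reducing everything to elementary properties of multinomial sampling. First I would introduce the gaps $G^j_i := T^j_i - T^{j-1}_i$ for $j\ge 1$, which by the strong Markov property of the iid sequence $(Z_n)_{n\ge 1}$ are themselves iid, each geometrically distributed with parameter $p_i$ (so $\bbp(G^j_i = g) = p_i(1-p_i)^{g-1}$ for $g\ge 1$). The point is that conditionally on $G^j_i = g$, the $g-1$ letters strictly between the $(j-1)^{\text{th}}$ and $j^{\text{th}}$ occurrences of $\aa_i$ are iid with the law of $Z_1$ conditioned on $\{Z_1 \ne \aa_i\}$, i.e.\ each takes value $\aa_r$ ($r\ne i$) with probability $p_r/(1-p_i)$, independently across the $g-1$ positions. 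From this the first two assertions are immediate: the vector $\big(N^{T^{j-1}_i,T^j_i}_r\big)_{r\ne i}$ is, conditionally on $(T^{j-1}_i,T^j_i)$ (equivalently, on $G^j_i$), a multinomial count of $g-1 = T^j_i - T^{j-1}_i - 1$ iid draws among the $m-1$ categories $\{r : r\ne i\}$ with probabilities $(p_r/(1-p_i))_{r\ne i}$; marginalizing to a single coordinate gives the stated binomial law.

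Next I would prove the independence across $j$ of the family $\big(N^{T^{j-1}_i,T^j_i}_r\big)_{j\ge 1}$ (for fixed $i\ne r$). This again follows from the strong Markov property: the blocks of letters lying strictly between consecutive $\aa_i$'s are disjoint, and conditionally on the gap sequence $(G^j_i)_{j\ge 1}$ the letters filling distinct blocks are independent; moreover the conditional law of $N^{T^{j-1}_i,T^j_i}_r$ depends on the gap sequence only through $G^j_i$. Hence for any bounded test functions $f_1,\dots,f_N$,
\begin{equation*}
\ee\Big[\prod_{j=1}^N f_j\big(N^{T^{j-1}_i,T^j_i}_r\big)\Big]
= \ee\Big[\prod_{j=1}^N \ee\big[f_j\big(N^{T^{j-1}_i,T^j_i}_r\big)\,\big|\,G^j_i\big]\Big]
= \prod_{j=1}^N \ee\Big[\ee\big[f_j\big(N^{T^{j-1}_i,T^j_i}_r\big)\,\big|\,G^j_i\big]\Big],
\end{equation*}
using the independence of the $(G^j_i)_j$ in the last step; this gives the claimed independence, and the identical distribution when the $p_r$ do not depend on $r$ (uniform case) follows since the $(G^j_i)_j$ are then i.i.d.\ with a law not depending on $j$ and the conditional law of $N^{T^{j-1}_i,T^j_i}_r$ given $G^j_i$ is the same for all $j$.

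Finally, for the moments, I would compute by conditioning on $G^j_i$ and using that a $\mathcal B(g-1, p_r/(1-p_i))$ variable has mean $(g-1)p_r/(1-p_i)$ and variance $(g-1)\frac{p_r}{1-p_i}\big(1-\frac{p_r}{1-p_i}\big)$. Since $G^j_i$ is geometric with parameter $p_i$, one has $\ee[G^j_i - 1] = (1-p_i)/p_i$ and $\var(G^j_i - 1) = (1-p_i)/p_i^2$. The mean is then $\ee[N^{T^{j-1}_i,T^j_i}_r] = \frac{p_r}{1-p_i}\cdot\frac{1-p_i}{p_i} = \frac{p_r}{p_i}$, and for the variance I would use the law of total variance:
\begin{equation*}
\var\big(N^{T^{j-1}_i,T^j_i}_r\big)
= \ee\Big[(G^j_i-1)\tfrac{p_r}{1-p_i}\big(1-\tfrac{p_r}{1-p_i}\big)\Big]
+ \var\Big((G^j_i-1)\tfrac{p_r}{1-p_i}\Big),
\end{equation*}
which evaluates to $\frac{1-p_i}{p_i}\cdot\frac{p_r}{1-p_i}\big(1-\frac{p_r}{1-p_i}\big) + \frac{1-p_i}{p_i^2}\cdot\frac{p_r^2}{(1-p_i)^2} = \frac{p_r}{p_i} - \frac{p_r^2}{p_i(1-p_i)} + \frac{p_r^2}{p_i^2(1-p_i)}$; combining the last two terms over the common denominator $p_i^2(1-p_i)$ gives $\frac{p_r^2(1-p_i)}{p_i^2(1-p_i)} = \frac{p_r^2}{p_i^2}$, so the variance equals $\frac{p_r}{p_i} + \frac{p_r^2}{p_i^2} = \frac{p_r}{p_i}\big(1 + \frac{p_r}{p_i}\big)$, as claimed. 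The only genuinely delicate point is making the ``strong Markov / block decomposition'' argument rigorous — i.e.\ justifying carefully that conditioning on the $\aa_i$-occurrence times turns the inter-block letters into independent conditionally-multinomial vectors — but this is a standard renewal-type argument and I expect no real obstacle beyond bookkeeping.
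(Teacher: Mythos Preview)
Your proof is correct and follows essentially the same approach as the paper: both arguments rest on the renewal/block decomposition of the iid sequence at the successive occurrence times of $\aa_i$, yielding the conditional multinomial law for the inter-block letter counts, and both derive independence across $j$ from the independence of the gap variables $G^j_i = T^j_i - T^{j-1}_i$ together with conditional independence of the blocks. The only cosmetic difference is that the paper computes the probability generating function $\ee[x^{N^{T^{j-1}_i,T^j_i}_r}] = p_i/(p_i+p_r-p_rx)$ and reads off the moments from it, whereas you use the law of total variance directly; both routes are equally short. One small remark: your argument for identical distribution across $j$ (iid gaps, same conditional law) works for \emph{any} $p_i$, not just the uniform case---the uniformity is only needed if one wants identical distribution across the indices $i$ and $r$ as well, which is how the paper uses it.
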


\begin{proof} 
Let us denote by $\call\big(N^{T^{j-1}_i,T^j_i}_r \big| T^{j-1}_i,T^j_i\big)$ 
the conditional law of $N^{T^{j-1}_i,T^j_i}_r$ given $T^{j-1}_i,T^j_i$. 
Recall, see \eqref{eq:Tj}, that $T^{j-1}_i$ and $T^j_i$ are the respective locations 
of the 
$(j-1)^{\mbox{\footnotesize{th}}}$ $\aa_i$ 
and the $j^{\mbox{\footnotesize{th}}}$ $\aa_i$ 
in the infinite sequence $(Z_n)_{n\geq 1}$.
Thus between $T^{j-1}_i+1$ and $T^j_i$, 
there are $T^j_i-T^{j-1}_i-1$ free spots 
and each one is equally likely to contain $\aa_r$, $r\ne i$,
with probability 
$p_r/(\sum^m_{\stackrel{\ell=1}{\ell\ne i}}p_\ell)= p_r/(1-p_i)$. 
Therefore,
\begin{equation}
\label{eq14}
\call\left(N^{T^{j-1}_i,T^j_i}_r\big| T^{j-1}_i,T^j_i\right)=
{\cal B}\left(T^j_i-T^{j-1}_i-1,\frac{p_r}{1-p_i}\right).
\end{equation}
Let us now compute the probability generating function of the 
random variables $N^{T^{j-1}_i,T^j_i}_r$, $i\not=r$. 
First, via \eqref{eq14}
\begin{align}
\nonumber
\ee\left[x^{N^{T^{j-1}_i,T^j_i}_r}\right]&=
\ee \left[\ee\left[ x^{N^{T^{j-1}_i,T^j_i}_r}\Big| T^{j-1}_i, T^j_i\right]
\right]\\
\nonumber
&= \sum^\infty_{\ell=1}\left(1-\frac{p_r}{1-p_i}+\frac{p_r}{1-p_i}x\right)
^{\ell-1} p_i(1-p_i)^{\ell-1}\\
\nonumber
&=\frac{p_i}{1-(1-p_i)\left(1-\frac{p_r}{1-p_i}+\frac{p_r}{1-p_i} x\right)}
\\
\label{eq:PGFN}
&=\frac{p_i}{p_i+p_r-p_r x},
\end{align}
since $T_i^j$ is a negative binomial (Pascal) random variable with 
parameters $j$ and $p_i$ which we shall denote ${\cal BN}(j,p_i)$ in the 
sequel and $T_i^j-T_i^{j-1}$ is a geometric random variables 
with parameter $p_i$, which we shall denote ${\cal G}(p_j)$. 
Therefore,
\begin{align}
\nonumber
\ee \left[N^{T^{j-1}_i,T^j_i}_r\right] &=\frac{p_r}{p_i},\\
\label{eq3.4}
\var\left(N^{T^{j-1}_i, T^j_i}_r\right) &= \frac{p_r}{p_i}\left(
1+\frac{p_r}{p_i}\right).
\end{align}
In the uniform case, i.e., $p_i=1/m$, $i=1,\dots, m$, 
the $N^{T^{j-1}_i,T^j_i}_r$, $i=1,\dots, m$, $i\ne r$, $j=1,2,\dots$ are clearly seen to be 
identically distributed, via \eqref{eq:PGFN}.  
The multinomial part of the statement is proved in a very similar manner.  
The $T^j_i-T^{j-1}_i-1$ free spots are to contain the letters $\aa_r$, $r\in \{1, \dots, m\}, 
r \neq i$,
with respective probabilities $p_r/(1-p_i)$. 
Therefore,
\begin{equation}
\label{eq14bis}
\call\left(\big(N^{T^{j-1}_i,T^j_i}_r\big)_{r=1, \dots m, r\neq i}\big| T^{j-1}_i,T^j_i\right)=
{\cal M}{\bf\it ul}\left(T^j_i-T^{j-1}_i-1,
\left(\frac{p_r}{1-p_i}\right)_{r=1, \dots, m, r\neq i}\right).
\end{equation}
Via \eqref{eq14bis}, the probability generating function of the 
random vector $\big(N^{T^{j-1}_i,T^j_i}_r\big)_{r=1, \dots, m, r\neq i}$ is then given by:    
\begin{align}
\nonumber
\ee\left[\prod_{r=1,r\neq i}^mx_r^{N^{T^{j-1}_i,T^j_i}_r}\right]&=
\ee \left[\ee\left[\prod_{\begin{subarray}{c} 
r=1\\r\neq i\end{subarray}}^m x_r^{N^{T^{j-1}_i,T^j_i}_r}\Big| T^{j-1}_i, T^j_i\right]
\right]\\
\nonumber
&= \sum^\infty_{\ell=1}\left(\sum_{\begin{subarray}{c} r=1\\r\neq i\end{subarray}}^m
\frac{p_r}{1-p_i}x_r\right)
^{\ell-1} p_i(1-p_i)^{\ell-1}\\
&=\frac{p_i}{1-\sum_{r=1, r\neq i}^mp_rx_r}.  
\label{eq:PGFNbis}
\end{align}
As a direct consequence of \eqref{eq:PGFNbis} and for $r\neq i, s \neq i$,   
\begin{equation*}
\label{eq:cov}
\cov\left(N^{T^{j-1}_i, T^j_i}_r, N^{T^{j-1}_i, T^j_i}_s \right) = \frac{p_rp_s}{p_i^2}.
\end{equation*}

The proof of the proposition will be complete once, for each $i\not=r$, the random 
variables $N^{T^{j-1}_i,T^j_i}_r$, $j\geq 1$, are shown to be independent. 
First, note that given $T^{j-1}_i, T^j_i, T^{k-1}_i, T^k_i$, the random variables 
$N^{T^{j-1}_i,T^j_i}_r=\sum_{\ell=T^{j-1}_i+1}^{T^{j}_i} \ind_{\{X_\ell=\aa_r\}}$ 
and $N^{T^{k-1}_i,T^k_i}_r=\sum_{\ell=T^{k-1}_i+1}^{T^{k}_i} \ind_{\{X_\ell=\aa_r\}}$ 
are independent since the intervals $[T^{j-1}_i+1, T^j_i]$ and $[T^{k-1}_i+1, T^k_i]$ 
are disjoint, 
and since the $(X_\ell)_{\ell\ge 1}$ are also independent.
Moreover, recall that conditional distributions are given by \eqref{eq14}, and 
so, for instance,
\begin{align*}
{\cal L}\left(N^{T^{j-1}_i,T^j_i}_r\big| T^{j-1}_i, T^j_i, T^{k-1}_i, T^k_i\right)
&={\cal L}\left(N^{T^{j-1}_i,T^j_i}_r\big| T^{j-1}_i, T^j_i\right)\\
&={\cal B}\left(T^j_i-T^{j-1}_i-1,\frac{p_r}{1-p_i}\right).
\end{align*} 
Therefore, for any measurable functions $f, g :\bbR_+\to\bbR_+$, and 
if $\ee_{{\cal B}(n,p)}$ denotes the expectation with respect to a 
binomial ${\cal B}(n,p)$ distribution then
\begin{align}
\nonumber
&\ee\left[f\big(N^{T^{j-1}_i,T^j_i}_r\big)g\big(N^{T^{k-1}_i,T^k_i}_r\big)\right]\\
\nonumber
&=\ee\left[\ee\left[f\big(N^{T^{j-1}_i,T^j_i}_r\big)g\big(N^{T^{k-1}_i,T^k_i}_r\big)
\big| T^{j-1}_i, T^j_i, T^{k-1}_i, T^k_i\right]\right]\\
\label{eq:indep1}
&=\ee\left[\ee\left[f\big(N^{T^{j-1}_i,T^j_i}_r\big)\big| T^{j-1}_i, T^j_i, 
T^{k-1}_i, T^k_i\right]
\ee\left[g\big(N^{T^{k-1}_i,T^k_i}_r\big)\big| T^{j-1}_i, T^j_i, 
T^{k-1}_i, T^k_i\right]\right]\\
\nonumber
&=\ee\left[\ee_{{\cal B}\left(T^j_i-T^{j-1}_i-1,\frac{p_r}{1-p_i}\right)}[f]\ 
\ee_{{\cal B}\left(T^k_i-T^{k-1}_i-1,\frac{p_r}{1-p_i}\right)}[g]\right]\\
\label{eq:indep2}
&=\ee\left[\ee_{{\cal B}\left(T^j_i-T^{j-1}_i-1,\frac{p_r}{1-p_i}\right)}[f]\right] \ 
\ee\left[\ee_{{\cal B}\left(T^k_i-T^{k-1}_i-1,\frac{p_r}{1-p_i}\right)}[g]\right]\\
\nonumber
&=\ee\left[f\big(N^{T^{j-1}_i,T^j_i}_r\big)\right]\ 
\ee\left[g\big(N^{T^{k-1}_i,T^k_i}_r\big)\right], 
\end{align}
where the equality in \eqref{eq:indep1} is due to the conditional 
independence property, while the one in \eqref{eq:indep2} follows from that 
$$
\ee_{{\cal B}\left(T^j_i-T^{j-1}_i-1,\frac{p_r}{1-p_i}\right)}[f]=F\big(T^j_i-T^{j-1}_i\big)
\quad \mbox{ and }\quad
\ee_{{\cal B}\left(T^k_i-T^{k-1}_i-1,\frac{p_r}{1-p_i}\right)}[g]=G\big(T^k_i-T^{k-1}_i\big), 
$$
for some functions $F, G$, and from the independence of 
$T^j_i-T^{j-1}_i$ and $T^k_i-T^{k-1}_i$. 
The argument can then be easily adapted to justify the mutual 
independence of the random variables $\big(N^{T^{j-1}_i,T^j_i}_r\big)_{j\ge 1}$.
 \end{proof}

\noindent
With the help of the previous proposition and in order to prepare our first fluctuation result, it is relevant to rewrite the representation \eqref{eq13} as 
\begin{align}\label{eq17}
\lcin&=
 \max_{\Ccal_n}\min
\left\{\sum^{m-1}_{i=1} k_i+N_m(X)-G_{n,m}(X),
\sum^{m-1}_{i=1}k_i+N_m(Y)-G_{n,m}(Y)\right\},
\end{align}
where
\begin{equation}\label{indG}
G_{n,m}=\sum^{m-1}_{i=1}
\sum^{N^*_i+k_i}_{j=N^*_i+1}\Biggl(\left(\frac{N^{T^{j-1}_i,T^j_i}_m-
\frac{p_m}{p_i}}{\sqrt{\frac{p_m}{p_i}\left(1+\frac{p_m}{p_i}\right)n}}\right)
\sqrt{\frac{p_m}{p_i}\left(1+\frac{p_m}{p_i}\right)n}+\frac{p_m}{p_i}\Biggr)
-\sum_{i=1}^{m-1}N_m^{T_i^{N_i^*}, T_{i-1}^{N_{i-1}^*+k_{i-1}}},
\end{equation}
and where $p_i(X)=\bbp(X_1=\aa_i)$ and $p_i(Y)=\bbp(Y_1=\aa_i)$, $1\leq i\leq m$.
Recall once more that  $G_{n,m}$ actually depends on $k_1, \dots, k_{m-1}$ but that, for the sake of readability, this dependency is omitted from our notations, see Remark~\ref{rem:depk}.

\medskip
Via \eqref{eq17} and \eqref{indG}, $\lcin$ is now represented as a max/min over 
random constraints of random sums of randomly stopped independent random variables, 
except for the presence of $N_m(X)$ and $N_m(Y)$. 
Our next result also represents, up to a small error term, both $N_m(X)$ and $N_m(Y)$ 
via the same random variables.
\begin{prop} 
\label{prop:S}
For each $i=1,2,\dots, m$, and $r\not=i$, 
\begin{equation}
\label{eq18}
N_r=\frac{p_r}{p_i}N_i+\sum^{N_i}_{j=1}
\frac{\left(N^{T^{j-1}_i,T^j_i}_r-\frac{p_r}{p_i}\right)}
{\sqrt{\frac{p_r}{p_i}\left(1+\frac{p_r}{p_i}\right)n}}
\sqrt{\frac{p_r}{p_i}\left(1+\frac{p_r}{p_i}\right) n} + S_{i,r}^{(n)}, 
\end{equation}
where $\lim_{n\to+\infty} {S_{i,r}^{(n)}}/{\sqrt n}=0$, in probability.
In particular, for each $r=1, 2, \dots, m$, 
\begin{equation}
\label{eq19}
N_r= np_r+\sum^{m}_{\begin{subarray}{c}i=1\\ i\not=r\end{subarray}}\sqrt{\frac{p_r}
{p_i}\left(1+\frac{p_r}{p_i}\right)n}
p_i\sum^{N_i}_{j=1}\frac{\left(N^{T^{j-1}_i,T^j_i}_r-\frac{p_r}{p_i}\right)}
{\sqrt{\frac{p_r}{p_i}\left(1+\frac{p_r}{p_i}\right) n}}
+\sum^{m}_{\begin{subarray}{c} i=1\\i\not=r\end{subarray}}p_iS_{i,r}^{(n)}.
\end{equation}
\end{prop}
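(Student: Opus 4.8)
The plan is to first prove \eqref{eq18} by an exact combinatorial decomposition of $N_r$ along the successive occurrences of $\aa_i$, and then to deduce \eqref{eq19} from \eqref{eq18} by a one-line linear manipulation using $\sum_{i=1}^m N_i = n$.

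For \eqref{eq18}, the starting point is the observation that, with the convention $T_i^0=0$ of \eqref{eq:Tj}, the half-open intervals $(T_i^{j-1},T_i^j]$, $j=1,\dots,N_i$, together with $(T_i^{N_i},n]$ form a partition of $\{1,\dots,n\}$ (recall $T_i^{N_i}\le n<T_i^{N_i+1}$). Counting the $\aa_r$'s block by block therefore yields the identity
$$N_r=\sum_{j=1}^{N_i}N_r^{T^{j-1}_i,T^j_i}+N_r^{T^{N_i}_i,n},$$
which is also valid, reading as $N_r=N_r^{0,n}$, on the (exponentially unlikely) event $\{N_i=0\}$. Adding and subtracting $(p_r/p_i)N_i$ inside the first sum and writing each increment $N_r^{T^{j-1}_i,T^j_i}-p_r/p_i$ in the normalized form used in \eqref{indG}, one obtains \eqref{eq18} with $S_{i,r}^{(n)}:=N_r^{T^{N_i}_i,n}$.

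The only step that is not pure bookkeeping is showing that $S_{i,r}^{(n)}/\sqrt n\to0$ in probability. Since $N_r^{T^{N_i}_i,n}$ counts a subset of the at most $n-T_i^{N_i}$ letters occurring strictly after the last $\aa_i$, we have $0\le S_{i,r}^{(n)}\le W_n$ where $W_n:=n-T_i^{N_i}$. For $1\le w\le n$ the event $\{W_n\ge w\}$ is exactly $\{Z_{n-w+1}\neq\aa_i,\dots,Z_n\neq\aa_i\}$, so $\bbp(W_n\ge w)=(1-p_i)^w$, while $\bbp(W_n\ge w)=0$ for $w>n$; hence $W_n$ is, uniformly in $n$, stochastically dominated by a geometric random variable, and $\bbp(S_{i,r}^{(n)}>\ep\sqrt n)\le\bbp(W_n>\ep\sqrt n)\le(1-p_i)^{\lceil\ep\sqrt n\rceil}\to0$ for every $\ep>0$ (this bound also takes care of the event $\{N_i=0\}$, on which $W_n=n$). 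This proves \eqref{eq18}.

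To get \eqref{eq19}, I would multiply \eqref{eq18} by $p_i$ and sum over $i\in\{1,\dots,m\}\setminus\{r\}$. On the left this gives $(\sum_{i\neq r}p_i)N_r=(1-p_r)N_r$, and on the right the leading terms combine to $p_r\sum_{i\neq r}N_i=p_r(n-N_r)$ since $\sum_{i=1}^mN_i=n$. Moving $p_rN_r$ to the left-hand side yields $N_r=np_r+\sum_{i\neq r}p_i\big(\sum_{j=1}^{N_i}(N_r^{T^{j-1}_i,T^j_i}-p_r/p_i)+S_{i,r}^{(n)}\big)$, which is precisely \eqref{eq19} once the inner sum is put back into normalized form. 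I do not expect a genuine obstacle in this proposition; the only mildly delicate ingredient is the tightness of the overshoot $n-T_i^{N_i}$, and that is settled by the elementary geometric tail bound above.
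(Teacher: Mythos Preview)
Your proof is correct and follows essentially the same approach as the paper's own proof: the same block decomposition of $N_r$ along successive $\aa_i$-occurrences, the same identification $S_{i,r}^{(n)}=N_r^{T_i^{N_i},n}$, the same geometric tail bound on $n-T_i^{N_i}$, and the same linear manipulation (multiply by $p_i$ and sum over $i\neq r$, then use $\sum_iN_i=n$) to pass from \eqref{eq18} to \eqref{eq19}. The only cosmetic differences are that the paper computes the point masses of $T_i^*$ and sums them rather than writing the tail probability directly, and multiplies by $p_i/p_r$ instead of $p_i$ before summing; neither changes anything of substance.
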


\begin{proof}
Let us start the proof of \eqref{eq18} by identifying the random variable $S_{i,r}^{(n)}$ and show that, when scaled by $\sqrt n$, they converge in probability to zero. 
Clearly, for $i=1,\dots, m$, $i\not=r$,
\begin{equation*}
\label{eq22}
0\le S_{i,r}^{(n)}:=N_r-\sum^{N_i}_{j=1} N^{T^{j-1}_i,T^j_i}_r=N_r-N_r^{T_i^*}.
\end{equation*}
In other words,  $S_{i,r}^{(n)}$ is the number of $\aa_r$ in the interval
$[T_i^*+1,n]$, where $T_i^*$ is the location of the last $\aa_i$ in $[1,n]$.
Therefore,
\begin{equation}\label{eq23}
0\le S_{i,r}^{(n)}\le n-T_i^*=n-(T^{N_i}_i\wedge n).
\end{equation}
But, $\bbp(T_i^*=n-k)=p_i(1-p_i)^k$, $k=0,1,\dots, n-1$ and $\bbp (T_i^*=0)=(1-p_i)^n$. 
Hence, for all $\epsilon >0$, and $n$ large enough, 
\begin{equation}\label{convS}
\bbp \left(\frac{S_{i,m}^{(n)}}{\sqrt n}\ge\epsilon\right)
\leq \bbp (n-T_i^*\ge\epsilon\sqrt n)
\leq \sum_{l=[\epsilon \sqrt n]}^n p_i(1-p_i)^l
\leq (1-p_i)^{[\epsilon \sqrt n]} 
\mathop{\longrightarrow}\limits_{n\to+\infty} 0.
\end{equation}
Let us continue with the proof of \eqref{eq19}. 
Summing over $i=1,\dots, m$, $i\not=r$, both sides 
of \eqref{eq18}, we get
\begin{align}
\label{eq20}
\sum^{m}_{\begin{subarray}{c} i=1\\i\not=r\end{subarray}}\frac{p_i}{p_r}N_r
=\sum^{m}_{\begin{subarray}{c} i=1\\i\not=r\end{subarray}}N_i
+\sum^{m}_{\begin{subarray}{c} i=1\\i\not=r\end{subarray}}
\sqrt{\frac{p_r}{p_i}\left(1+\frac{p_r}{p_i}\right) n}
\frac{p_i}{p_r}\left(\sum^{N_i}_{j=1}\frac{\left(N^{T^{j-1}_i,T^j_i}_r-
\frac{p_r}{p_i}\right)}{\sqrt{\frac{p_r}{p_i}\left(1+\frac{p_r}{p_i}\right) n}}
\right)+\sum^{m}_{\begin{subarray}{c} i=1\\i\not=r\end{subarray}}\frac{p_i}{p_r} S_{i,r}^{(n)}. 
\end{align}
But, $\sum^m_{i=1}N_i=n$, and so \eqref{eq20} becomes 
\begin{equation*}
\label{eq21}
N_r=np_r+\sum^{m}_{\begin{subarray}{c} i=1\\i\not=r\end{subarray}}\sqrt{\frac{p_r}
{p_i}\left(1+\frac{p_r}{p_i}\right) n}p_i
\left(\sum^{N_i}_{j=1}\frac{\left(N^{T^{j-1}_i,T^j_i}_r-\frac{p_r}{p_i}\right)}
{\sqrt{\frac{p_r}{p_i}\left(1+\frac{p_r}{p_i}\right) n}}\right)
+\sum^{m}_{\begin{subarray}{c} i=1\\i\not=r\end{subarray}}p_iS_{i,r}^{(n)},
\end{equation*}
which is precisely \eqref{eq19}. 
\end{proof}
\begin{rem}
\label{rem:Sim}
{\rm
For all $i\not=r$, $\lim_{n\to+\infty} \ee\big[(S_{i,r}^{(n)})^2/n\big]=0$.
Indeed, 
\begin{eqnarray*}
\ee\big[(S_{i,r}^{(n)})^2/n\big]
&=& \int_0^{+\infty} \pp\big((S_{i,m}^{(n)})^2\geq xn\big)dx
\leq\int_0^{+\infty} (1-p_i)^{[\sqrt{xn}]}dx\\
&\leq&\int_0^{+\infty} (1-p_i)^{\sqrt{xn}-1}dx
=\frac2{n(1-p_i)(\ln (1-p_i))^2}.
\end{eqnarray*}

}
\end{rem}

\medskip\noindent
Returning to the representation \eqref{eq17}, the previous proposition allows us to rewrite $\lcin$ as:
\begin{eqnarray}
\nonumber
\lcin&=&\max_{\bigcap^{m-1}_{i=1}\Ccal_{n,i}}\min
\Biggl(np_m(X)+\sum^{m-1}_{i=1}k_i-p_m(X)\sum^{m-1}_{i=1}\frac{k_i}{p_i(X)}\\
\nonumber
&&\hspace{4cm} +H_{m,n}(X)+K_{m,n}(X)
+\sum^{m-1}_{i=1}p_i(X)S_{i,m}^{(n)}(X),\\
\nonumber
&&
np_m(Y)+\sum^{m-1}_{i=1}k_i-p_m(Y)\sum^{m-1}_{i=1}\frac{k_i}{p_i(Y)}\\
\label{eq24}
&&\hspace{4cm}+H_{m,n}(Y)+K_{m,n}(Y)
+\sum^{m-1}_{i=1}p_i(Y)S_{i,m}^{(n)}(Y)\!\Biggr),
\end{eqnarray}
where omitting the dependency in  $k_1, \dots, k_{m-1}$ (see Remark~\ref{rem:depk}),
\begin{eqnarray}
\label{eq25}
\lefteqn{H_{m,n}=\sum^{m-1}_{i=1}\sqrt{\frac{p_m}{p_i}\left(1+\frac{p_m}{p_i}\right)
n} p_i\sum^{N_i}_{j=1}
\frac{\left(N^{T^{j-1}_i,T^j_i}_m-\frac{p_m}{p_i}\right)}
{\sqrt{\frac{p_m}{p_i}\left(1+\frac{p_m}{p_i}\right) n}}}
\nonumber\\
&&\hspace{3cm}-\sum^{m-1}_{i=1} \sqrt{\frac{p_m}{p_i}\left(1+\frac{p_m}{p_i}\right) n}
\sum^{N^*_i+k_i}_{j=N^*_i+1}
\frac{\left(N^{T^{j-1}_i,T^j_i}_m-\frac{p_m}{p_i}\right)}
{\sqrt{\frac{p_m}{p_i}\left(1+\frac{p_m}{p_i}\right) n}},
\end{eqnarray}
and 
\begin{equation}
\label{eq:Kmn}
K_{m,n}=\sum_{i=1}^{m-1} N_m^{T_i^{N_i^*}, T_{i-1}^{N_{i-1}^*+k_{i-1}}}. 
\end{equation}
We now study some of the properties of the random variables $N_i^*$ which are present in both the random constraints and the random sums.  
The random variables $N_i^*$ are defined recursively by \eqref{eq:Nr*} with $N_1^*=0$. We fix ${\bf k}=(k_1, \dots, k_{m-1})$ where $k_i$ is the number of letters $\aa_i$ present in the common increasing subsequences. 
The random variables $N_i^*$, $i\ge 2$, depend on $\bf k$, actually $N_i^*=N_i^*(k_1, \dots, k_{i-1})$. 
We write 
\begin{equation}
\label{eq:NiNij}
N_i^*=\sum_{j=1}^{i-1} N_{i,j}^*
\end{equation}
where $N_{i,j}^*=N_{i,j}^*(k_j)$ is the number of letters $\aa_i$ present 
in the step $j\le i-1$ consisting in collecting the $k_j$ letters $\aa_j$, $j\le i-1$. 
(In the sequel, in order not to further burden the notations, 
we shall skip the symbols ${k_j}$, 
$j=1, \dots, i-1,$ in $N_i^*$ and $N_{i,j}^*$.)   
The following diagram encapsulates the drawing of the letters: 
{\tiny
$$
\hskip -20pt
\begin{array}{ccccccccccccccc}
1&&T_1^{k_1}&&T_2^{k_2+N_2^*}&&T_3^{k_3+N_3^*}&\dots&T_{j-1}^{k_{j-1}+N_{j-1}^*}
&&T_j^{k_j+N_j^*}&\dots&T_{i-2}^{k_{i-2}+N_{i-2}^*}&&T_{i-1}^{k_{i-1}+N_{i-1}^*}\\
\hline
&k_1 \ \aa_1&&&&&&&&&&&&&\\
&N_{2,1}^*\ \aa_2&&k_2\ \aa_2&&&&&&&&&&\\
&N_{3,1}^* \ \aa_3&&N_{3,2}^* \ \aa_3&& k_3 \ \aa_3&&&& k_j \ \aa_j&&&&\\
&\vdots&&\vdots&&\vdots&&\vdots&&\vdots&&\vdots&&k_{i-1}\aa_{i-1}\\
&N_{i,1}^* \ \aa_i&& N_{i,2}^*\  \aa_i&& N_{i,3}^* \ \aa_i&&\dots&&N_{i,j}^*\ \aa_i&&\dots
&&N_{i, i-1}^* \ \aa_i
\end{array}
$$
}
In Step $j\le i-1$, there are $T_j^{k_j+N_j^*}-T_{j-1}^{k_{j-1}+N_{j-1}^*}$ letters 
selected but $k_j$ letters are $\aa_j$, $N_{j+1,j}^*$ 
are $\aa_{j+1}$, \dots, $N_{i-1,j}^*$ are $\aa_{i-1}$, (for $j=i-1$, there are 
also $k_j$ letters $\aa_j$ but none of the others $\aa_{j+1}$, {\it etc}). 

Moreover, there are $T_j^{k_j+N_j^*}-T_{j-1}^{k_{j-1}+N_{j-1}^*}-k_j-N_{j+1,j}^*-\dots-N_{i-1,j}^*$ 
possible spots ($T_j^{k_j+N_j^*}-T_{j-1}^{k_{j-1}+N_{j-1}^*}-k_j$ in case $j=i-1$) 
in which the probability 
of having a $\aa_i$ is  $p_{i,j}:=p_i/(1-p_j-\dots-p_{i-1})$.  
Therefore, conditionally on 
$$
{\cal G}_{i,j}({\bf k})=\sigma\left(N_{j+1,j}^*, 
\dots, N_{i-1,j}^*, T_{j-1}^{k_{j-1}+N_{j-1}^*}, T_j^{k_j+N_j^*}\right),
$$ 
(the $\sigma$-field generated by $N_{j+1,j}^*, 
\dots, N_{i-1,j}^*, T_{j-1}^{k_{j-1}+N_{j-1}^*}, T_j^{k_j+N_j^*}$) it follows that  
\begin{equation}
\label{eq:Nij}
N_{i,j}^* \sim {\cal B}\Big(T_j^{k_j+N_j^*}-T_{j-1}^{k_{j-1}+N_{j-1}^*}
-k_j-N_{j+1,j}^*-\dots-N_{i-1,j}^*, p_{i,j}\Big).
\end{equation}

\medskip\noindent
The two forthcoming propositions respectively characterize the 
laws of $N_{i,j}^*$ and of $N_i^*$. 
\begin{prop} 
\label{prop:law_Nij}
For each $i=2, \dots, m$, the probability generating 
function of $N_{i,j}^*$, $1\leq j\leq i-1$, is given by
\begin{equation}
\label{eq:LaplaceNij}
\ee\Big[x^{N_{i,j}^*}\Big]=\left(\frac{p_j}{p_j+p_i-p_ix}\right)^{k_j}.
\end{equation}
Therefore, $N_{i,j}^*$ is distributed as $\sum_{\ell=1}^{k_j} (G_\ell-1)$, 
where $(G_\ell)_{1\leq \ell\leq k_j}$ are independent 
with geometric law ${\cal G}\big(p_j/(p_j+p_i)\big)$ and so,
\begin{equation}
\label{eq:EVar_Nij}
\ee[N_{i,j}^*]=\frac{p_i}{p_j}k_j
\quad \mbox{ and } \quad 
\Var(N_{i,j}^*)=\left(1+\frac{p_i}{p_j}\right)\frac{p_i}{p_j}k_j.
\end{equation}
\end{prop}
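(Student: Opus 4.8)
The plan is to recognise that Step~$j$, seen from the random time $\tau:=T_{j-1}^{k_{j-1}+N_{j-1}^*}$ at which it starts, is a fresh copy of the underlying i.i.d.\ letter sequence, and that from this vantage point $N_{i,j}^*$ is a sum of $k_j$ independent copies of a quantity already described by Proposition~\ref{lemind}. Throughout I fix ${\bf k}=(k_1,\dots,k_{m-1})$, take $Z=X$ (the argument for $Z=Y$ being identical), and assume $k_j\ge1$, since $k_j=0$ forces $N_{i,j}^*=0$ and makes \eqref{eq:LaplaceNij}--\eqref{eq:EVar_Nij} trivial.

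First I would verify that $\tau$ is an almost surely finite stopping time for the natural filtration $({\cal F}_n)_{n\ge1}$, ${\cal F}_n=\sigma(Z_1,\dots,Z_n)$. This goes by induction on~$j$: each $N^*_{j-1}$ is a deterministic function of the letters drawn up to the (a.s.\ finite, by the induction hypothesis) stopping time $T_{j-2}^{k_{j-2}+N_{j-2}^*}$ at which Step~$j-1$ ends, so $\tau$, being the location of the $(k_{j-1}+N_{j-1}^*)$-th letter $\aa_{j-1}$, is again an a.s.\ finite stopping time. By the strong Markov property for i.i.d.\ sequences, conditionally on ${\cal F}_\tau$ the shifted sequence $(Z_{\tau+n})_{n\ge1}$ is i.i.d.\ with the original law.

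Next I would identify $N_{i,j}^*$ inside that shifted sequence. By \eqref{eq:NiNij} and \eqref{eq:Nr*}, $N_{i,j}^*=N_i^{\tau,\tau'}$ with $\tau'=T_j^{k_j+N_j^*}$; since $N_j^*$ counts the letters $\aa_j$ among $Z_1,\dots,Z_\tau$, the position $\tau'$ is precisely the location of the $k_j$-th letter $\aa_j$ in $(Z_{\tau+n})_{n\ge1}$. Writing $0=:\sigma^0<\sigma^1<\dots<\sigma^{k_j}$ for the locations of the first $k_j$ letters $\aa_j$ in that shifted sequence and noting $Z_{\tau+\sigma^\ell}=\aa_j\ne\aa_i$, one obtains
\[
N_{i,j}^*=\#\{\ell'\in(\tau,\tau+\sigma^{k_j}]:Z_{\ell'}=\aa_i\}
=\sum_{\ell=1}^{k_j}\#\{\ell'\in(\tau+\sigma^{\ell-1},\tau+\sigma^\ell):Z_{\ell'}=\aa_i\}.
\]
The $\ell$-th summand is the number of letters $\aa_i$ strictly between the $(\ell-1)$-th and $\ell$-th occurrences of $\aa_j$ in $(Z_{\tau+n})_{n\ge1}$, i.e.\ exactly a variable of the type studied in Proposition~\ref{lemind} applied to the shifted i.i.d.\ sequence, with reference letter $\aa_j$ and counted letter $\aa_i$. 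Hence, conditionally on ${\cal F}_\tau$, these $k_j$ summands are independent, each with probability generating function $p_j/(p_j+p_i-p_ix)$, mean $p_i/p_j$ and variance $(p_i/p_j)(1+p_i/p_j)$; since this common law is free of ${\cal F}_\tau$, it is also the unconditional one. Multiplying generating functions yields \eqref{eq:LaplaceNij}; identifying $p_j/(p_j+p_i-p_ix)$ as the p.g.f.\ of $G-1$ with $G\sim{\cal G}(p_j/(p_j+p_i))$ shows $N_{i,j}^*$ to be distributed as $\sum_{\ell=1}^{k_j}(G_\ell-1)$ with the $G_\ell$ independent; and summing means and (by independence) variances gives \eqref{eq:EVar_Nij}.

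The only point that really needs care is the reduction above: that $\tau$ is a bona fide a.s.\ finite stopping time, so that Step~$j$ genuinely restarts a fresh i.i.d.\ sequence and Proposition~\ref{lemind} applies verbatim; everything else is bookkeeping together with the standard geometric/negative-binomial generating-function and moment identities already invoked in the proof of Proposition~\ref{lemind}. One could alternatively bypass Proposition~\ref{lemind} and argue from the conditional law \eqref{eq:Nij}: conditioning on ${\cal G}_{i,j}({\bf k})$ gives $\ee[x^{N_{i,j}^*}]=\ee\big[(1-p_{i,j}(1-x))^{L_j}\big]$, where $L_j$ is the number of ``possible spots'' of Step~$j$; the same shift argument, now restricted to the letters $\{\aa_j\}\cup\{\aa_1,\dots,\aa_{j-1},\aa_i,\dots,\aa_m\}$, exhibits $L_j$ as a sum of $k_j$ i.i.d.\ shifted geometric variables, and a short manipulation collapses the nested generating functions to $\big(p_j/(p_j+p_i-p_ix)\big)^{k_j}$.
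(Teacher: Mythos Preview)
Your proof is correct and takes a genuinely different route from the paper's. The paper starts from the conditional binomial law \eqref{eq:Nij} and writes $\ee\big[x^{N_{i,j}^*}\big]=\ee\big[y^{U-V}\big]$ with $y=1-p_{i,j}+p_{i,j}x$, $U=T_j^{k_j+N_j^*}-T_{j-1}^{k_{j-1}+N_{j-1}^*}-k_j$ and $V=\sum_{r=j+1}^{i-1}N_{r,j}^*$; it then identifies the (unconditional) law of $U$ as a sum of $k_j$ i.i.d.\ shifted ${\cal G}(p_j)$ variables and the conditional law of $V$ given $U$ as binomial, and computes the nested expectation algebraically until everything collapses to $\big(p_j/(p_j+p_i-p_ix)\big)^{k_j}$. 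You instead invoke the strong Markov property at $\tau=T_{j-1}^{k_{j-1}+N_{j-1}^*}$ to restart a fresh i.i.d.\ sequence, observe that $N_{i,j}^*$ is the number of $\aa_i$'s up to the $k_j$-th $\aa_j$ in the shifted sequence, and decompose it into $k_j$ blocks to which Proposition~\ref{lemind} applies verbatim. Your route is shorter and more conceptual---it recycles Proposition~\ref{lemind} rather than redoing an analogous computation---and it has the bonus of making the distributional identity with $\sum_\ell(G_\ell-1)$ transparent. The paper's route is more computational but has the advantage of being entirely self-contained from \eqref{eq:Nij}; note, too, that the strong Markov argument you use here is precisely what the paper deploys later, in the proof of Proposition~\ref{prop:law_Ni}, to establish the independence of the $N_{i,j}^*$. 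Your alternative sketch at the end is in fact the paper's own argument.
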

\begin{proof}
Recall that, for $N\sim {\cal B}(n,p)$, $\ee[x^N]=(1-p+px)^n$ 
while, for $N\sim{\cal G}(p)$, $\ee[x^N]=
{px/(1-(1-p)x)}$. 
Using \eqref{eq:Nij}, we then have for 
$N=T_j^{k_j+N_j^*}-T_{j-1}^{k_{j-1}+N_{j-1}^*}-k_j-N_{j+1,j}^*-\cdots-N_{i-1,j}^*$,
\begin{eqnarray}
\nonumber
\ee\Big[x^{N_{i,j}^*}\Big]
&=&\ee\Big[\ee\big[x^{N_{i,j}^*}\big|N\big]\Big]\\
\nonumber
&=&\ee\Big[(1-p_{i,j}+p_{i,j}x)^{T_j^{k_j+N_j^*}-T_{j-1}^{k_{j-1}+N_{j-1}^*}
-k_j-N_{j+1,j}^*-\dots-N_{i-1,j}^*}\Big]\\
\label{eq:techLaplace}
&=&\ee\Big[y^{U-V}\Big],
\end{eqnarray}
setting $y=(1-p_{i,j}+p_{i,j}x)$, and 
\begin{eqnarray}
\label{eq:U-L}
U&:=&T_j^{k_j+N_j^*}-T_{j-1}^{k_{j-1}+N_{j-1}^*}-k_j 
\sim {\cal BN}(k_j, p_j)\ast\delta_{-k_j}\\
\label{eq:V-L}
V&:=&\sum_{r=j+1}^{i-1}N_{r,j}^*\sim {\cal B}
\Big(U, \sum_{r=j+1}^{i-1} \frac{p_r}{1-p_j}\Big),
\end{eqnarray}
where for $j=i-1$, we also set $V=0$. 
The notation ${\cal BN}(k, p)$ above stands for the negative binomial (Pascal)
distribution with 
parameters $k$ and $p$. The parameters of the binomial random variables $V$ 
in \eqref{eq:V-L} stem from 
that $V$ counts the number of letters $\aa_r$, $j+1\leq r\leq i-1$, between 
two letters $\aa_j$, 
while exactly $k_j$ such letters are obtained, so that 
each $\aa_r$ has probability $p_r/(1-p_j)$ to appear. 
Hence, 
\begin{eqnarray*}
\ee\Big[y^{U-V}\Big]&=&\ee\Big[\ee\big[y^{U-V}|U\big]\Big]\\
&=&\ee\Big[y^U\ee\big[y^{-V}|U\big]\Big]\\
&=&\ee\Big[y^U\Big(1-\sum_{r=j+1}^{i-1} \frac{p_r}{1-p_j}+\frac{\sum_{r=j+1}^{i-1} p_r}
{(1-p_j)y}\Big)^U\Big]\\
&=&\ee\Big[\Big(\big(1-\sum_{r=j+1}^{i-1} \frac{p_r}{1-p_j}\big)y+\sum_{r=j+1}^{i-1} 
\frac{p_r}{1-p_j}\Big)^{G_1-1}\Big]^{k_j},
\end{eqnarray*}
since, from \eqref{eq:U-L}, $U\sim\sum_{\ell=1}^{k_j}(G_\ell-1)$, 
where the $G_\ell$, $1\leq \ell \leq k_j$, are iid with distribution ${\cal G}(p_j)$. 
Finally,  
\begin{eqnarray*}
\ee\Big[y^{U-V}\Big]&=&
\left(\frac{p_j}{1-(1-p_j)\Big(\big(1-\sum_{r=j+1}^{i-1}\frac{p_r}{1-p_j}\big)y
+\sum_{r=j+1}^{i-1}\frac{p_r}{1-p_j}\Big)}\right)^{k_j}\\
&=&\left(\frac{p_j}{p_j+p_i-p_ix}\right)^{k_j},
\end{eqnarray*}
since $p_{i,j}=p_i/\big(1-\sum_{r=j}^{i-1}p_r\big)$. 
The expressions for the expectation and for the 
variance in \eqref{eq:EVar_Nij} follow from straightforward computations. 
\end{proof}

Recall that by convention, $N_1^*=0$, and for $2\le i\le m$, the 
following proposition gives the law of $N_i^*$: 
\begin{prop} 
\label{prop:law_Ni}
For each $i=2, \dots, m$, the random 
variables $(N_{i,j}^*)_{1\leq j\leq i-1}$ are independent. 
Hence, the probability generating function of $N_i^*$ is given by
\begin{eqnarray}
\label{eq:LaplaceNi}
\ee\Big[x^{N_i^*}\Big]&=&\prod_{j=1}^{i-1}
\left(\frac{p_j}{p_j+p_i-p_ix}\right)^{k_j}, 
\end{eqnarray}
and so, 
\begin{equation}
\label{eq:EVar_Ni}
\ee[N_i^*]=\sum_{j=1}^{i-1} \frac{p_i}{p_j}k_j
\quad \mbox{ and } \quad 
\Var(N_i^*)
=\sum_{j=1}^{i-1}\left(1+\frac{p_i}{p_j}\right)\frac{p_i}{p_j}k_j.
\end{equation}
\end{prop}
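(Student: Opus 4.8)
The plan is to reduce the whole statement to the single fact that, for each fixed $i\in\{2,\dots,m\}$ and each fixed ${\bf k}=(k_1,\dots,k_{m-1})$, the variables $N_{i,1}^*,\dots,N_{i,i-1}^*$ are mutually independent. Granting this, \eqref{eq:LaplaceNi} is immediate from \eqref{eq:NiNij} together with $\ee[x^{N_i^*}]=\prod_{j=1}^{i-1}\ee[x^{N_{i,j}^*}]$, each factor being given by \eqref{eq:LaplaceNij}; likewise, adding the means and the variances of \eqref{eq:EVar_Nij} over the independent summands $N_{i,j}^*$ yields \eqref{eq:EVar_Ni}. So everything rests on the independence claim, and one may assume that $k_j\ge1$ for all $j=1,\dots,i-1$, since any $j$ with $k_j=0$ has $N_{i,j}^*=0$ a.s.\ (clear from the definition, and also from $\ee[N_{i,j}^*]=(p_i/p_j)k_j$ in \eqref{eq:EVar_Nij}), hence contributes the factor $1$ to \eqref{eq:LaplaceNi}, the summand $0$ to the moments, and is trivially independent of the remaining ones.

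To prove the independence I would make precise the block decomposition underlying the diagram in Section~\ref{sec:probability}. Put $\sigma_0:=0$ and, for $j=1,\dots,i-1$, let $\sigma_j:=T_j^{k_j+N_j^*}$ be the location at which ``step $j$'' ends, so that, by \eqref{eq:Nr*} and the assumption $k_j\ge1$, the steps partition $\{1,\dots,\sigma_{i-1}\}$ into the consecutive non-empty blocks $(\sigma_{j-1},\sigma_j]$, $\sigma_j$ being the smallest index $s>\sigma_{j-1}$ for which $X_{\sigma_{j-1}+1},\dots,X_s$ contains exactly $k_j$ letters equal to $\aa_j$; in particular each $\sigma_j$ is an a.s.\ finite stopping time for the natural filtration ${\cal F}_s:=\sigma(X_1,\dots,X_s)$. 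Writing $W_j:=X_{\sigma_{j-1}+1}\cdots X_{\sigma_j}$ for the (random, a.s.\ finite) word read during step $j$, the counting performed just before \eqref{eq:Nij} identifies $N_{i,j}^*$, for every $i>j$, with the number of letters equal to $\aa_i$ occurring in $W_j$; hence $N_{i,j}^*$ is a deterministic measurable function of $W_j$ alone.

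The core step is then the mutual independence of $W_1,\dots,W_{i-1}$, which I would establish by induction on $j$ using the strong Markov property of the iid sequence $(X_\ell)_{\ell\ge1}$: conditionally on ${\cal F}_{\sigma_{j-1}}$ --- which carries $W_1,\dots,W_{j-1}$ --- the shifted sequence $(X_{\sigma_{j-1}+\ell})_{\ell\ge1}$ is again iid with the original law and independent of ${\cal F}_{\sigma_{j-1}}$, while $W_j$ is obtained from this shifted sequence by the stopping rule ``stop upon collecting $k_j$ letters $\aa_j$''; therefore $W_j$ is independent of $(W_1,\dots,W_{j-1})$, with a law depending on $k_j$ only. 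Iterating yields the full mutual independence of $W_1,\dots,W_{i-1}$, and since $N_{i,1}^*,\dots,N_{i,i-1}^*$ are functions of the disjoint blocks $W_1,\dots,W_{i-1}$, they too are independent, which is what was needed. An alternative route, closer to the proof of Proposition~\ref{lemind} and not invoking the strong Markov property explicitly, is to compute the joint generating function $\ee\big[\prod_{j=1}^{i-1}x_j^{N_{i,j}^*}\big]$ by successively conditioning on the $\sigma$-fields ${\cal G}_{i,j}({\bf k})$ and peeling off one block at a time, the telescoping of the negative-binomial and geometric factors (exactly as in the proof of Proposition~\ref{prop:law_Nij}) producing $\prod_{j=1}^{i-1}\ee\big[x_j^{N_{i,j}^*}\big]$. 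I expect the only genuinely delicate point to be the bookkeeping that certifies each ``step $j$'' as a bona fide stopping time and $N_{i,j}^*$ as measurable with respect to the $j$-th block; once that is in place, the probabilistic content is just the memorylessness already exploited repeatedly in Section~\ref{sec:probability}.
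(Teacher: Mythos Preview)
Your proposal is correct and follows essentially the same approach as the paper: reduce to the independence of the $N_{i,j}^*$, then invoke the strong Markov property at the stopping times $\sigma_j=T_j^{k_j+N_j^*}$ to conclude that the successive blocks $W_j$ (and hence the $N_{i,j}^*$ they determine) are independent. Your treatment is in fact more careful than the paper's, which only spells out the case $j=1,2$ and leaves the full mutual independence to the reader; your explicit handling of the $k_j=0$ case and the inductive block argument fill in precisely what the paper omits.
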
 
\begin{proof}
In view of Proposition~\ref{prop:law_Nij} and of \eqref{eq:NiNij}, it is enough 
to prove the first part of the proposition, i.e., to prove that the random 
variables $N_{i,j}^*$, $1\leq j\leq i-1$, are independent. 
In order to simplify notations, we only show that $N_{i,1}^*$ and $N_{i,2}^*$ are 
independent, but the argument can easily be extended to prove the full independence 
property. 
Since the $T_i^k$'s are stopping times, by the strong Markov property, observe that
$\sigma\big(X_1, \dots , X_{T_1^{k_1}}\big)
\underset{{T_1^{k_1}}}{\indep}
\sigma\big(X_{T_1^{k_1}+1}, \dots , X_{T_2^{k_2+N_2^*}}\big)$
where, again $\sigma(X_1, \dots , X_n)$ denotes the $\sigma$-field generated 
by the random variables $X_1, \dots, X_n$, while $\underset{{T_1^{k_1}}}{\indep}$ stands 
for independence conditionally on $T_1^{k_1}$.  
Moreover, $T_1^{k_1}$ and $\sigma\big(X_{T_1^{k_1}+1}, \dots , X_{T_2^{k_2+N_2^*}}\big)$ 
are independent, 
and thus so are 
$\sigma\big(X_1, \dots , X_{T_1^{k_1}}\big)$ and $\sigma\big(X_{T_1^{k_1}+1}, 
\dots , X_{T_2^{k_2+N_2^*}}\big)$. 
The independence of $N_{i,1}^*$ and $N_{i,2}^*$ becomes clear, 
since $N_{i,1}^*$ is $\sigma\big(X_1, \dots , X_{T_1^{k_1}}\big)$-measurable 
while $N_{i,2}^*$ is $\sigma\big(X_{T_1^{k_1}+1}, \dots , X_{T_2^{k_2+N_2^*}}\big)$-measurable.
The whole conclusion of the proposition then follows.
\end{proof}


\section{The Uniform Case}
\label{sec:Uniform}

In this section, we specialize ours results to the case where the letters are 
uniformly drawn from the alphabet, i.e., $p_i(X)=p_i(Y)= 1/m$, for all $1\leq i\leq m$.
Hence, the functional $\lcin$ in \eqref{eq24} rewrites as
\begin{eqnarray}
\nonumber
\lefteqn{\lcin=\max_{\Ccal_n}\min\left(
\frac nm+H_{m,n}(X)+K_{m,n}(X)+\frac 1m\sum_{i=1}^{m-1} S_{i,m}^{(n)}(X), \right . }
\\
&&
\label{eq:lcin}
\hspace{5cm} \left . 
\frac nm+H_{m,n}(Y)+K_{m,n}(Y) +\frac 1m\sum_{i=1}^{m-1} S_{i,m}^{(n)}(Y)\right),
\end{eqnarray}
and therefore 
\begin{eqnarray}\label{eq:lcin2}
\frac{\lcin- n/m}{\sqrt{2n}}
&=&\max_{\Ccal_n}\min\left( \frac{H_{m,n}(X)}{\sqrt{2n}}+\frac{K_{m,n}(X)}{\sqrt{2n}}+\frac 1{m\sqrt{2n}}
\sum_{i=1}^{m-1} S_{i,m}^{(n)}(X), \right.\nonumber \\
&&\qquad\qquad\qquad\qquad\left. \frac{H_{m,n}(Y)}{\sqrt{2n}}+\frac{K_{m,n}(Y)}{\sqrt{2n}}+\frac 1{m\sqrt{2n}}
\sum_{i=1}^{m-1} S_{i,m}^{(n)}(Y)\right).
\end{eqnarray}
The following simple inequality, a version of which is already present in \cite{HLM}, will be of multiple use (see Appendix~\ref{sec:Appendix_lemma} for a proof):
\begin{lem} 
\label{lemme:ineg}
Let $a_k, b_k, c_k, d_k$, $1\leq k\leq K$, be reals.  Then, 
\begin{equation}
\label{eq:ineg}
\left|\max_{k=1,\dots, K} \big(a_k\wedge b_k\big)-\max_{k=1,\dots, K} \big((a_k+c_k)
\wedge (b_k+d_k)\big)\right|
\leq \max_{k=1,\dots, K}\Big(|c_k|\vee|d_k|\Big).
\end{equation}
\end{lem}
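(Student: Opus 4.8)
The plan is to reduce the statement to the elementary observation that, for each fixed index $k$, adding to $a_k$ and $b_k$ quantities bounded in absolute value by $E:=\max_{k=1,\dots,K}(|c_k|\vee|d_k|)$ can move $a_k\wedge b_k$ by at most $E$ in either direction. First I would fix $k$ and note that, since $-E\le c_k\le E$ and $-E\le d_k\le E$, monotonicity of $\min$ gives
$$(a_k-E)\wedge(b_k-E)\ \le\ (a_k+c_k)\wedge(b_k+d_k)\ \le\ (a_k+E)\wedge(b_k+E),$$
and since $\min$ commutes with the addition of a constant, the two outer terms equal $(a_k\wedge b_k)-E$ and $(a_k\wedge b_k)+E$ respectively. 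Hence $\big|(a_k+c_k)\wedge(b_k+d_k)-(a_k\wedge b_k)\big|\le E$ for every $k$.

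Next I would take the maximum over $k$, being careful to bound each perturbed term against the \emph{global} maximum before passing to the supremum. From $(a_k+c_k)\wedge(b_k+d_k)\le (a_k\wedge b_k)+E\le \max_{j}(a_j\wedge b_j)+E$, valid for all $k$, taking $\max_k$ on the left yields $\max_{k}\big((a_k+c_k)\wedge(b_k+d_k)\big)\le \max_{k}(a_k\wedge b_k)+E$. The reverse inequality $\max_{k}(a_k\wedge b_k)\le \max_{k}\big((a_k+c_k)\wedge(b_k+d_k)\big)+E$ follows symmetrically, e.g.\ by applying the same argument with $(a_k,b_k)$ replaced by $(a_k+c_k,b_k+d_k)$ and $(c_k,d_k)$ by $(-c_k,-d_k)$ (which leaves $E$ unchanged). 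Combining the two one-sided bounds gives exactly \eqref{eq:ineg}.

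There is essentially no obstacle here: the only point requiring attention is not to lose the outer maximum by trying to match the two families term by term — one must dominate each individual perturbed term by $\max_{j}(a_j\wedge b_j)+E$ first. An alternative, slightly longer, route would be to compose the two ``$1$-Lipschitz in the sup-norm'' facts, namely $|\min(x,y)-\min(x',y')|\le |x-x'|\vee|y-y'|$ and $\big|\max_{k}u_k-\max_{k}v_k\big|\le \max_{k}|u_k-v_k|$, but the direct two-sided estimate above is the shortest way to the claim.
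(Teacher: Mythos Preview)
Your proof is correct and follows essentially the same two-step scheme as the paper: first establish the pointwise inequality $\big|(a_k+c_k)\wedge(b_k+d_k)-(a_k\wedge b_k)\big|\le |c_k|\vee|d_k|$, then lift it to the maxima via the $1$-Lipschitz property of $\max$ (the paper states the latter directly as $|\max_k u_k-\max_k v_k|\le\max_k|u_k-v_k|$, while you reprove it by two one-sided bounds). Your monotonicity argument for the pointwise step is in fact cleaner than the paper's case analysis, but the overall structure is the same.
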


The previous lemma entails 
\begin{eqnarray*}
\nonumber
\lefteqn{\left|
\max_{\Ccal_n}\min\left( \frac{H_{m,n}(X)}{\sqrt{2n}}
+\frac{K_{m,n}(X)}{\sqrt{2n}}
+\frac 1{m\sqrt{2n}}
\sum_{i=1}^{m-1} S_{i,m}^{(n)}(X), 
\right . \right .
}
\\
&&\hspace{4cm}\left . \frac{H_{m,n}(Y)}{\sqrt{2n}}
+\frac{K_{m,n}(Y)}{\sqrt{2n}}
+\frac 1{m\sqrt{2n}}\sum_{i=1}^{m-1} 
S_{i,m}^{(n)}(Y)\right)
 \\
&&\qquad \qquad \qquad \qquad \qquad \qquad \qquad \qquad \qquad \qquad -\left . 
\max_{\Ccal_n}\min\left( \frac{H_{m,n}(X)}{\sqrt{2n}}, 
\frac{H_{m,n}(Y)}{\sqrt{2n}}\right) \right | \\ 
&\le& \frac 1{m\sqrt{2n}}\left(\left|\sum_{i=1}^{m-1} S_{i,m}^{(n)}(X)+\frac{K_{m,n}(X)}{\sqrt{2n}}\right|\vee 
\left|\sum_{i=1}^{m-1} S_{i,m}^{(n)}(Y)+\frac{K_{m,n}(Y)}{\sqrt{2n}}\right|\right).
\end{eqnarray*}
But, from Proposition~\ref{prop:S}, as $n\to +\infty$, 
both $S_{i,m}^{(n)}(X)/\sqrt n\cvP 0$ and  
$S_{i,m}^{(n)}(Y)/\sqrt n \cvP 0$, for all $1\leq i\leq m-1$ (see \eqref{convS}). 
Let us now show that similarly  $K_{m,n}(X)/\sqrt n \cvP 0$ and $K_{m,n}(Y)/\sqrt n \cvP 0$ as $n\to+\infty$. 
Doing it for the $X$-sequence, and dropping the $X$-index in the notation, from \eqref{eq:Kmn}, it is enough to deal, for any $2\leq i\leq m-1$, with the number $N_m^{T_i^{N_i^*}, T_{i-1}^{N_{i-1}^*+k_{i-1}}}$ of $\alpha_m$ between the last ($k_{i-1}$-th) $\alpha_{i-1}$ selected and the previously ending $\alpha_i$. 
Counting in a backward manner the letters from the last $\alpha_{i-1}$ selected, the first $\alpha_i$ letter in this backward scheme (i.e., the first $\alpha_i$ before the last $\alpha_{i-1}$ selected) comes after a geometric number of letters. 
Since any letter but $\alpha_{i-1}$ is possible, this geometric random variable has parameter $p_i/(1-p_{i-1})=1/(m-1)$. 
In turn, we derive $\ee\big[N_m^{T_i^{N_i^*}, T_{i-1}^{N_{i-1}^*+k_{i-1}}}\big]\leq (1-p_{i-1})/p_i=m-1$, from which it follows that $N_m^{T_i^{N_i^*}, T_{i-1}^{N_{i-1}^*+k_{i-1}}}/\sqrt{2n}\cvP 0$ and $K_{m,n}/\sqrt{2n}\cvP 0$ as $n\to +\infty$.

\medskip\noindent
Therefore, the fluctuations of $\lcin$ expressed in \eqref{eq:lcin2} are the same as 
that of 
$$
\max_{\Ccal_n}\min\left( \frac{H_{m,n}(X)}{\sqrt{2n}}, 
\frac{H_{m,n}(Y)}{\sqrt{2n}}\right).
$$ 

\medskip\noindent
For uniform draws, the functional $H_{m,n}$ in \eqref{eq25} rewrites as   
\begin{eqnarray*}
\label{eq:HH2}
H_{m,n}&=&\sum_{i=1}^{m-1} \sqrt{2n} \frac 1m\sum_{j=1}^{N_i} 
\frac{N_m^{T_i^{j-1}, T_i^j}-1}{\sqrt{2n}}-\sum_{i=1}^{m-1} \sqrt{2n} 
\sum_{j=N_i^*+1}^{N_i^*+k_i} \frac{N_m^{T_i^{j-1}, T_i^j}-1}{\sqrt{2n}}\\
&=&
\sqrt{2n}\left( \frac 1m\sum_{i=1}^{m-1} B_n^{(i)}\left(\frac{N_i}n\right)
-\sum_{i=1}^{m-1} \left(B_n^{(i)}\left(\frac{N_i^*+k_i}n\right)
-B_n^{(i)}\left(\frac{N_i^*}{n}\right)\right)\right),
\end{eqnarray*}
where $B_n^{(i)}$ is the Brownian approximation defined from the random 
variables $N_m^{T_i^{j-1}, T_i^j-1}$, $j\geq 1$, which are iid, by Proposition~\ref{lemind}, 
centered 
and scaled to have variance one, i.e., $B_n^{(i)}$ is the polygonal process 
on $[0,1]$ defined by linear interpolation between the values
\begin{equation}
\label{interpol}
B_n^{(i)}\left(\frac kn\right)=\sum_{j=1}^k \frac{Z_j^{(i)}}{\sqrt n},
\end{equation}
where
\begin{equation}
\label{eq:Zj}
Z_j^{(i)}=\frac{N_m^{T_i^{j-1}, T_i^j}-1}{\sqrt 2}. 
\end{equation}
Next, we present some heuristic arguments which provide 
the limiting behavior of 
\begin{eqnarray}\label{eq:lcin22}
&&\max_{\Ccal_n}\min\left( 
\!\frac 1m\!\sum_{i=1}^{m-1}\!B_n^{(i),X}\!\left(\!\frac{N_i(X)}n\!\right)
-\sum_{i=1}^{m-1}\!\left(\!B_n^{(i),X}\!\left(\!\frac{N_i^*(X)+k_i}n\right)
-B_n^{(i),X}\left(\frac{N_i^*(X)}{n}\right)\!\right)\! \right ., \nonumber\\
&&\hskip 2cm \left .\frac 1m\!\sum_{i=1}^{m-1}\!B_n^{(i),Y}\!\left(\!\frac{N_i(Y)}n\!\right)
-\sum_{i=1}^{m-1}\!\!\left(\!B_n^{(i),Y}\!\left(\!\frac{N_i^*(Y)+k_i}n\right)
-B_n^{(i),Y}\!\left(\!\frac{N_i^*(Y)}{n}\right)\!\right)\!\!\right), 
\end{eqnarray}
knowing that, by Donsker theorem, 
$(B_n^{(1)}, \dots, B_n^{(m-1)})\stackrel{(C_0([0,1]))^{m-1}}{=\!\!\!=\!\!\!=\!\!\!=\!\!\!=\!\!\!=\!\!\!=\!\!\!=\!\!\!=\!\!\!=\!\!\!\Longrightarrow}
(B^{(1)}, \dots, B^{(m-1)})$, $n\to+\infty$, where $(B^{(1)}, \dots, B^{(m-1)})$ is a drift-less, $(m-1)$-dimensional, correlated Brownian motion on $[0,1]$, which is also zero at the origin. 
The correlation structure of this multivariate Brownian motion is given by that of the $Z_j^{(i)}$, $1\leq i\leq m-1$, which in turn is given by Proposition~\ref{lemind}. 
Above, $\stackrel{(C_0([0,1]))^{m-1}}{=\!\!\!=\!\!\!=\!\!\!=\!\!\!=\!\!\!=\!\!\!=\!\!\!=\!\!\!=\!\!\!\Longrightarrow}$ stands for the convergence in law in the product space of continuous function on $[0,1]$ vanishing at the origin.
Since the multivariate Donsker theorem is crucial is our argument, we give a precise statement:
\begin{thm}[Donsker]
\label{theo:Donsker}
Let $(Z_j)_{j\geq 1}$ be iid square integrable centered random vectors in $\bbR^{m-1}$, $m\geq 2$, with covariance matrix $\Sigma$. 
Let $(B_n)_{t\in [0,1]}$ be the polygonal process defined, for each $n\geq 1$, by 
$$
B_n(t)=\frac 1{\sqrt n} \sum_{k=1}^{[nt]} Z_k + \frac{(nt-[nt])}{\sqrt n} Z_{[nt]+1}, \quad t\in [0,1].
$$
Then $B_n\stackrel{(C_0([0,1]))^{m-1}}{=\!\!\!=\!\!\!=\!\!\!=\!\!\!=\!\!\!=\!\!\!=\!\!\!=\!\!\!=\!\!\!=\!\!\!\Longrightarrow} B$ 
where $B$ is a Brownian motion on $[0,1]^{m-1}$ with covariance matrix $t\Sigma$ and where $\stackrel{(C_0([0,1]))^{m-1}}{=\!\!\!=\!\!\!=\!\!\!=\!\!\!=\!\!\!=\!\!\!=\!\!\!=\!\!\!=\!\!\!=\!\!\!=\!\!\!=\!\!\!\Longrightarrow}$ stands for the convergence in law in the product space of continuous function on $[0,1]$ vanishing at the origin. 
\end{thm}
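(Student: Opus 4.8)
The plan is to obtain Theorem~\ref{theo:Donsker} from the classical one-dimensional invariance principle, exploiting that both convergence of finite-dimensional distributions and tightness behave well with respect to the finite product structure of $(C_0([0,1]))^{m-1}$. So the proof splits into a finite-dimensional step and a tightness step, which together yield weak convergence and identify the limit.

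\emph{Step 1: finite-dimensional distributions.} Fix $0=s_0\le s_1<\cdots<s_\ell\le 1$. Writing each increment $B_n(s_q)-B_n(s_{q-1})$ as $n^{-1/2}\sum_{k=[ns_{q-1}]+1}^{[ns_q]}Z_k$ plus two boundary interpolation terms of the form $\pm n^{-1/2}(ns-[ns])Z_{[ns]+1}$, one sees that the main parts are normalized sums of the iid, centered, square-integrable vectors $Z_k$ over the disjoint index blocks $([ns_{q-1}],[ns_q]]$, of cardinality $\sim n(s_q-s_{q-1})$. By the multivariate central limit theorem applied to each block and by independence across blocks, the vector of these main parts converges in distribution to a tuple of independent centered Gaussian vectors with covariance matrices $(s_q-s_{q-1})\Sigma$. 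The interpolation remainders are bounded by $n^{-1/2}|Z_{[ns]+1}|$, which tends to zero in probability; hence, by Slutsky, $(B_n(s_1),\dots,B_n(s_\ell))$ converges in distribution to $(B(s_1),\dots,B(s_\ell))$. This identifies the only possible limit as the centered Gaussian process with independent increments and covariance $\ee[B(s)B(t)^{\top}]=(s\wedge t)\Sigma$, i.e. the Brownian motion with covariance $t\Sigma$.

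\emph{Step 2: tightness and conclusion.} A sequence of random elements of a finite product $(C_0([0,1]))^{m-1}$ is tight if and only if each of the $m-1$ coordinate sequences is tight in $C_0([0,1])$. But the $i$-th coordinate of $B_n$ is exactly the classical Donsker polygonal interpolation of the partial sums of the iid, centered, square-integrable real random variables $(Z^{(i)}_k)_{k\ge 1}$ (here $Z^{(i)}_k$ denotes the $i$-th component of $Z_k$), whose tightness in $C_0([0,1])$ is precisely the content of the one-dimensional Donsker theorem --- established, e.g., by controlling the modulus of continuity through a maximal inequality (cf.\ Billingsley). Hence $(B_n)_{n\ge 1}$ is tight, and combined with Step~1 this gives $B_n \Longrightarrow B$ in $(C_0([0,1]))^{m-1}$ with $B$ the Brownian motion with covariance $t\Sigma$, as claimed. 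The only mildly delicate points --- and the nearest thing to an obstacle in an otherwise textbook argument --- are the reduction of tightness to the componentwise one-dimensional statement and the disposal of the interpolation terms; both are routine. One could instead run the Cram\'er--Wold device at the path level, noting that $\langle \lambda, B_n\rangle$ is a one-dimensional Donsker process for every fixed $\lambda\in\bbR^{m-1}$, but deducing weak convergence from one-dimensional projections still requires the tightness of $(B_n)$, so this variant offers no real saving.
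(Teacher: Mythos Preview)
Your proof is correct and follows essentially the same route as the paper: finite-dimensional convergence via the multivariate CLT (the paper is terser and does not spell out the interpolation remainders and Slutsky step), and tightness in the product space reduced to coordinatewise tightness supplied by the univariate Donsker theorem. The paper even makes the product-tightness argument explicit by choosing compacts $K_i\subset C_0([0,1])$ with $\sup_n\bbp(B_n^{(i)}\notin K_i)<\varepsilon$ and bounding $\sup_n\bbp\big((B_n^{(1)},\dots,B_n^{(m-1)})\notin K_1\times\cdots\times K_{m-1}\big)\le (m-1)\varepsilon$, which is exactly your Step~2.
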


\begin{proof}
The multivariate Donsker theorem easily derives from the classical univariate one for which we refer, for instance to \cite[Th.~8.2]{Billingsley} and from the multivariate CLT as follows. 
Recall that the convergence  $B_n\stackrel{(C_0([0,1]))^{m-1}}{=\!\!\!=\!\!\!=\!\!\!=\!\!\!=\!\!\!=\!\!\!=\!\!\!=\!\!\!=\!\!\!\Longrightarrow} B$ is equivalent to the convergence of finite-dimensional distributions of $B_n$ to that  of $B$ and to the tightness of $(B_n)_{n\geq 1}$ in $(C_0([0,1]))^{m-1}$.
First, the multivariate CLT gives the convergence of the finite-dimensional distributions of $(B_n^{(1)}(t), \dots, B_n^{(m-1)}(t))_{0\le t \le 1}$ with a covariance structure given by that of the $Z_1^{(i)}$, $1\leq i\leq m-1$. 
Second, the tightness of $\big(B_n^{(1)}(t), \dots, B_n^{(m-1)}(t)\big)_{0\le t \le 1}$ is obtained from that of its coordinates: since $B_n^{(i)}$ is tight for each $1\leq i\leq m-1$ by the univariate Donsker theorem, for all $\varepsilon>0$, there is a compact $K_i$ of $C_0([0,1])$, the usual space of continuous functions on $[0,1]$ vanishing at the origin, such that $\sup_{n\geq 1}\pp\big(B_n^{(i)}\not\in K_i\big)<\varepsilon$ and we have 
\begin{equation*}
\sup_{n\geq 1}\pp\Big(\big(B_n^{(1)}, \dots, B_n^{(m-1)}\big)
\not\not\in K_1\times\dots\times K_{m-1}\Big)
\leq\sup_{n\geq 1}\sum_{i=1}^{m-1}\pp\big(B_n^{(i)}\not\not\in K_i\big)\\
<(m-1)\varepsilon,
\end{equation*}
with $K_1\times\dots\times K_{m-1}$ compact of $(C_0([0,1]))^{m-1}$ so that $\big(B_n^{(1),X}, \dots, B_n^{(m-1),X}\big)$ is tight in $C_0([0,1])^{m-1}$. 
\end{proof}


\subsection*{Heuristics}
Roughly speaking, there are three limits to handle in \eqref{eq:lcin22}:
\begin{enumerate}
\item The limit of the constraints in the maximum over $\Ccal_n$;
\item The limit of the linear terms: $\sum_{i=1}^{m-1} B_n^{(i),X}\left(\frac{N_i(X)}n\right)$;
\item The limit of the increments: $\sum_{i=1}^{m-1} \left(B_n^{(i),X}
\left(\frac{N_i^*(X)+k_i}n\right)-B_n^{(i),X}\left(\frac{N_i^*(X)}{n}\right)\right)$;
\end{enumerate}
and, similarly, for $X$ replaced by $Y$.  
Below, the symbol $\rightsquigarrow$ indicates a heuristic replacement or a heuristic limit, as $n\to+\infty$.

\medskip\noindent
First Limit (to be treated last, in Section \ref{sec:constraints}):  
Since $\Ccal_{n,i}(k_1, \dots, k_{i-1})=\{{\kk} = (k_1, \dots, k_{m-1}):0\leq k_i\leq 
\min\big(N_i(X)- N_i^*(X), N_i(Y)- N_i^*(Y)\big)\}$, 
(and, again, with vacuous constraints in case either 
$N_i^*(X)>n$ or $N_i^*(Y)>n$) and from the concentration property of the 
$N_i^*$, we expect (with again $k_0=0$, and $t_0 = 0$, below):  
\begin{eqnarray*}
\Ccal_{n,i}(k_1, \dots, k_{i-1})&\rightsquigarrow& \left\{{\kk}=(k_1, \dots, k_{m-1}): 
0\leq k_i\leq \left(\bbe[N_i(X)]-\sum_{j=1}^{i-1} k_j\right)\wedge 
\left(\bbe[N_i(Y)]-\sum_{j=1}^{i-1} k_j\right)\right\}\\
&=&\left\{{\kk}=(k_1, \dots, k_{m-1}): \frac 1n\sum_{j=1}^{i-1} k_j\leq \frac 1n
\sum_{j=1}^{i} k_j\le \frac{\bbe[N_i]}{n}, i =1, \dots, m-1\right\}.
\end{eqnarray*}
Hence, for $\Ccal_n$ defined in \eqref{eq8C}: 
$$
\Ccal_n
\rightsquigarrow {\cal V}\left(\frac 1m, \dots, \frac 1m\right),
$$
where 
${\cal V}(p_1, \dots, p_{m-1})=\big\{{\ttt}\!=\!(t_1, \dots, 
t_{m-1})\!: t_i\ge 0, i =1, \dots, m-1, t_1\le p_1, t_1+t_2\le p_2, 
\dots, t_1+\cdots +t_{m-1}\le p_{m-1}\big\}$.

\medskip\noindent
Second Limit (see Section~\ref{sec:linear}): For each $i=1, \dots, m-1$, 
the random variables $N_i$ are concentrated 
around their respective mean $\bbe[N_i]$ ($=1/m$), and so   
$$
\frac {N_i}n\rightsquigarrow \bbe[N_i]  \quad {\rm and} \quad 
\sum_{i=1}^{m-1} B_n^{(i)}\left(\frac{N_i}n\right)\rightsquigarrow 
\sum_{i=1}^{m-1} B^{(i)}\big(\bbe[N_i]\big)
=\sum_{i=1}^{m-1} B^{(i)}\Big(\frac 1m\Big),
$$
where the limit $B_n^{(i)}\stackrel{C_0([0,1])}{=\!\!=\!\!\Longrightarrow} B^{(i)}$ is 
taken simultaneously. 

\medskip\noindent
Third Limit (see Section~\ref{sec:increments}):  For each $i=1, \dots, m-1$, 
the random variables $N_i^*$ are also concentrated 
around their mean $\ee[N_i^*]=\sum_{j=1}^{i-1} k_j$, and so 
$N_i^*\rightsquigarrow \sum_{j=1}^{i-1} k_j$.  Therefore, 
\begin{eqnarray*}
B_n^{(i),X}\left(\frac{N_i^*(X)+k_i}n\right)-B_n^{(i),X}
\left(\frac{N_i^*(X)}{n}\right)&\rightsquigarrow&
B_n^{(i),X}\left(\sum_{j=1}^{i}\frac{k_j}n\right)
-B_n^{(i),X}\left(\sum_{j=1}^{i-1}\frac{k_j}n\right)\\
&\rightsquigarrow& B^{(i),X}\left(\sum_{j=1}^it_j\right)-B^{(i),X}
\left(\sum_{j=1}^{i-1}t_{j}\right), 
\end{eqnarray*}
and similarly for $X$ replaced by $Y$.  
Hence,    
\begin{eqnarray*}
\!\!\!\frac{\lcin- n/m}{\sqrt{2n}} 
\rightsquigarrow \max_{{\cal V}(1/m, \dots, 1/m)}\!\min\! 
\left(\!\frac 1m\!\sum_{i=1}^{m-1}\!B^{(i),X}\!\!\left(\frac 1m\right)-
\sum_{i=1}^{m-1}\!\left(\!\!B^{(i),X}\!\!\left(\sum_{j=1}^{i}t_{j}\!\right)
\!-B^{(i),X}\!\!\left(\sum_{j=1}^{i-1}t_{j}\!\right)\!\right)\!\!\right., \\
\quad\quad\quad \quad\quad\quad \left.\frac 1m\!\sum_{i=1}^{m-1}\!B^{(i),Y}\!\left(\!\frac 1m\right) -
\sum_{i=1}^{m-1}\!\left(\!B^{(i),Y}\!\left(\sum_{j=1}^{i}t_{j}\right)-B^{(i),Y}\!
\left(\!\sum_{j=1}^{i-1}t_{j}\right)\!\right)\!\right)\\
\stackrel{{\cal L}}{=} \frac{1}{\sqrt m}\max_{0=u_0\le u_1 \le \cdots\le u_{m-1}\le 1}
\min\!\left(\!\frac 1{m} \sum_{i=1}^{m-1} B^{(i),X}(1)
-\sum_{i=1}^{m-1}\left(\!B^{(i),X}(u_i)
-B^{(i),X}\!(u_{i-1})\right)\!\!\right.,\\ 
\quad\quad\quad \quad\quad\quad \left.\frac{1}{m}\sum_{i=1}^{m-1} B^{(i),Y}(1)-
\sum_{i=1}^{m-1} \left(\!B^{(i),Y}\!(u_i)
-B^{(i),Y}\!(u_{i-1})\right)\!\right),   
\end{eqnarray*}
by Brownian scaling and the reparametrization 
$\sum_{j=1}^{i}t_j = u_i/m$, $i=1, \dots m-1$, $u_0=t_0=0$.
In other words, 
\begin{eqnarray*}
\frac{\lcin- n/m}{\sqrt{2n/m}} 
\rightsquigarrow \max_{0=u_0\le u_1 \le \cdots\le u_{m-1}\le 1}\min \left( \frac 1m \sum_{i=1}^{m-1} 
B^{(i),X}(1)-\sum_{i=1}^{m-1} 
\left(\!B^{(i),X}\!(u_i)-B^{(i),X}\!(u_{i-1})\right),\right.\\
\quad\quad\quad \quad\quad\qquad  \left.\frac 1m \sum_{i=1}^{m-1} B^{(i),Y}(1)-
\sum_{i=1}^{m-1} \left(B^{(i),Y}(u_i)
-B^{(i),Y}(u_{i-1})\right)\!\right).  
\end{eqnarray*}
Finally, a linear transformation and Brownian properties allow 
to transform the parameter space 
into the Weyl chamber 
$$
{\cal W}_m(1):=\big\{{\bf t}=(t_0, t_1, \dots, 
t_{m-1}, t_m) : 0=t_0 \le t_1 \le \dots \le t_{m-1}\le t_m =1\big\},
$$ 
and to replace the $(m-1)$-dimensional correlated Brownian motion $B^X$ (resp.~$B^Y$), by  
an $m$-dimensional standard one $B_1$ (resp.~$B_2$).  Combining these facts, 
the expression on the right-hand side above, 
becomes equal, in law, to:

\begin{eqnarray}\label{eqlast}
\!\!\!\!\!\!\!&&\max_{{\bf t}\in{\cal W}_m(1)}
\!\min\!\left(-\frac{1}{m}\!\sum_{i=1}^m\!B^{(i)}_1\!(1)
+\sum_{i=1}^m\left(B^{(i)}_1\!(t_i)-B^{(i)}_1\!(u_{i-1})\right), \right.\nonumber\\ 
&&\qquad \qquad \qquad \left.\hskip 2cm -\frac{1}{m}\!\sum_{i=1}^m\!B^{(i)}_2\!(1)
+\sum_{i=1}^m\left(B^{(i)}_2\!(t_i)-B^{(i)}_2\!(u_{i-1})\right)\!\right), \nonumber   
\end{eqnarray}
which is the final form of our result, Theorem \ref{thm1.1}. 
In the sequel, we make precise the previous heuristic arguments.     


\medskip 
All along, we use different sets constraints. 
For easy references, we gather here the references to these notations:
$\widetilde\Ccal_n$  is defined in \eqref{eq8Ctilde},
$\widetilde\Ccal_{n,i}(k_1,\dots, k_{i-1})$ in \eqref{eq8Citilde},
$\Ccal_n$ in \eqref{eq8C}, 
$\Ccal_{n,i}(k_1, \dots, k_{i-1})$ in \eqref{eq8Ci}, 
$\Ccal_{n,i}^*$ in \eqref{eq:Cni*}, 
$\Ccal_n^*$ in \eqref{eq:Cn*}, 
$\Ccal_{n,i}$ above \eqref{eq:Cni**}, 
$\Ccal_{n,i}^\#$ in \eqref{eq:Cni**}, 
$\Ccal_n^{\pm}$ in \eqref{eq:Cpm}. 


\subsection{The Linear Terms}
\label{sec:linear}

Set 
$$
R_1(X)=\sum_{i=1}^{m-1} \left(B_n^{(i),X}\left(\frac{N_i^*+k_i}n\right)-
B_n^{(i),X}\left(\frac{N_i^*}n\right)\right),
$$
where again the dependency of $R(X)$ in $(k_1, \dots, k_{m-1})$ is omitted (see Remark~\ref{rem:depk}), so that with the help of \eqref{eq:lcin22}, \eqref{eq:lcin2} rewrites as:

\begin{eqnarray}
\label{deux}
&&\frac{\lcin-n/m}{\sqrt{2n}}=\max_{\Ccal_n}\min\left(\frac 1m\sum_{i=1}^{m-1} B_n^{(i),X}
\left(\frac{N_i(X)}n\right)-R_1(X),\right. \nonumber \\
&&\hskip 6cm\left. \frac 1m\sum_{i=1}^{m-1} 
B_n^{(i),Y}\left(\frac{N_i(Y)}n\right)-R_1(Y)
\right)+o_\pp(1), 
\end{eqnarray}
where, throughout, $o_\pp(1)$ indicates a term, which might be different 
from an expression to another, 
converging to zero, in probability, as $n$ converges to infinity.     

\medskip\noindent
Next, by Lemma~\ref{lemme:ineg}, 
\begin{eqnarray}
&&\left|
\max_{\Ccal_n}\min\left(\frac 1m\sum_{i=1}^{m-1} B_n^{(i),X}
\Big(\frac{N_i(X)}n\Big)-R_1(X), 
\frac 1m\sum_{i=1}^{m-1} B_n^{(i),Y}\Big(\frac{N_i(Y)}n\Big)-R_1(Y) \right)\right. \nonumber 
\\
 \nonumber 
&& \quad \left .
-\max_{\Ccal_n}\min\left(\!\frac 1m\sum_{i=1}^{m-1}\!B_n^{(i),X}
\Big(\!\frac{\ee[N_i(X)]}n\Big)-R_1(X), \frac 1m\sum_{i=1}^{m-1}\!B_n^{(i),Y}
\Big(\!\frac{\ee[N_i(Y)]}n\Big)-R_1(Y)\!\right)\right|
\\
 \nonumber 
&\leq&\max_{\Ccal_n}\left|
\min\left(\frac 1m\sum_{i=1}^{m-1} B_n^{(i),X}\Big(\frac{N_i(X)}n\Big)-R_1(X), 
\frac 1m\sum_{i=1}^{m-1} B_n^{(i),Y}\Big(\frac{N_i(Y)}n\Big)-R_1(Y)\right)\right .
\\
 \nonumber 
&&\quad \quad \quad \quad \left. -\min\left(\!\frac 1m\sum_{i=1}^{m-1}\! 
B_n^{(i),X}\Big(\frac{\ee[N_i(X)]}n\Big)-R_1(X), \frac 1m\sum_{i=1}^{m-1}\! 
B_n^{(i),Y}\Big(\frac{\ee[N_i(Y)]}n\Big)-R_1(Y)\!\right)\right| 
\\
 \nonumber 
&\leq& \max_{\Ccal_n}\left(\max\left(\frac 1m\left|\sum_{i=1}^{m-1}
\Big(B_n^{(i),X}\Big(\frac{N_i(X)}n\Big)
-B_n^{(i),X}\Big(\frac{\ee[N_i(X)]}n\Big)\Big)\right|, \right.\right.
\\ 
\label{premred}
&&\hskip 5cm\left.\left. \frac 1m\left|\sum_{i=1}^{m-1}
\Big(B_n^{(i),Y}\Big(\frac{N_i(Y)}n\Big)
-B_n^{(i),Y}\Big(\frac{\ee[N_i(Y)]}n\Big)\Big)\right|\right)\right).
\end{eqnarray}
We now wish to show that the right-hand side of \eqref{premred} converges 
to zero, in probability. 
First note that for each $2\le i\leq m-1$, 
$\Ccal_{n,i}(k_1 \dots, k_{i-1})\subset 
\big\{{\kk} = (k_1, \dots, k_{m-1}): 0\leq k_i\leq \min\left(N_i(X), N_i(Y)\right)\big\} 
\subset \big\{{\kk} = (k_1, \dots, k_{m-1}): 0\le k_i\le n\big\}$, and the same holds true for $\Ccal_{n,1}$, see \eqref{eq8C}.  
But, $B_n^{(i)}\left({N_i}/n\right)-B_n^{(i)}(\ee [N_i]/n)$, where we have dropped 
$X$ and $Y$, does not depend on $\kk$.
Therefore, the maximum can be skipped and the problem reduces 
to showing that, for all  $1\le i \le m-1$: 
\begin{equation}
\label{eq:max1}
\left|B_n^{(i)}\left(\frac{N_i}n\right)-B_n^{(i)}\left(\frac{\ee[N_i]}n\right)\right|\cvP 0,  
\end{equation}
as $n\to +\infty$.  
This follows from the forthcoming lemma applied, for each $i=1, \dots, m-1$, to 
the random variables $Z_j^{(i)}=\big(N_m^{T_i^{j-1}, T_i^j}-1\big)/\sqrt {2}$ , 
present in both \eqref{interpol} and \eqref{eq:Zj} and which, by Proposition~\ref{lemind}, 
are iid with mean zero and variance one. 
Note that the lemma below (see Appendix~\ref{sec:Appendix_lemma} for a proof) can indeed be brought into play since Hoeffding's inequality, 
applied to the random variables $N_i$, ensures that  for $x_n=\sqrt n \ln n$, 
\begin{equation}
\label{hoe}
\lim_{n\to +\infty}\bbp\big(|N_i-\ee[N_i]|\ge x_n\big)
\le \lim_{n\to +\infty} {2e^{-2x_n^2/n}}=0.
\end{equation}  
\begin{lem}
\label{lemme:devZ}
Let $(Z_j)_{j\geq 1}$ be iid centered random variables with unit variance,  
and for each $n\in \nit$, let $N^{(n)}$ be an $\nit$-valued random variable such 
that $\lim_{n\to+\infty}\bbp\big(|N^{(n)}- \ee[N^{(n)}]| \geq x_n\big)=0$, where $x_n\geq 0$ is such that $\lim_{n\to+\infty} x_n/n=0$.
Then,  
$$
\sum_{j\in [N^{(n)}, \ee[N^{(n)}] ]} \frac{Z_j}{\sqrt n}\stackrel{\bbp}{\longrightarrow} 0, 
$$
where $[N^{(n)}, \ee[N^{(n)}] ]$ is short for $[\min(N^{(n)},\ee[N^{(n)}]),  \max(N^{(n)},\ee[N^{(n)}])]$.  
\end{lem}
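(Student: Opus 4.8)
The plan is to replace the randomly indexed sum by a maximal fluctuation of the underlying walk over a \emph{deterministic} window, and then to control that fluctuation via Kolmogorov's maximal inequality. Throughout, write $S_k=\sum_{j=1}^k Z_j$ (with $S_0=0$), $c_n=\ee[N^{(n)}]$ and $b_n=\lfloor c_n\rfloor\in\bbn$. Fix $\ep>0$, and let $A_n=\{|N^{(n)}-c_n|<x_n\}$, so that $\bbp(A_n)\to 1$ by hypothesis. Since the complementary event is asymptotically negligible, it suffices to prove that $\bbp\big(A_n\cap\{|\sum_{j\in[N^{(n)},c_n]}Z_j|\ge \ep\sqrt n\}\big)\to 0$ as $n\to\infty$.

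The key observation is that, with the evident conventions for the index boundary, the sum $\sum_{j\in[N^{(n)},c_n]}Z_j$ is exactly $S_a-S_b$ for an ordered pair of integers $a,b$ read off from $N^{(n)}$ and $c_n$ (the pair $\{b_n,(N^{(n)}-1)\vee 0\}$ when $N^{(n)}\le c_n$, and $\{N^{(n)},(\lceil c_n\rceil-1)\vee 0\}$ otherwise), and that on the event $A_n$ both $a$ and $b$ lie in the \emph{deterministic} window $[b_n-L_n,b_n+L_n]\cap\bbn$ with $L_n:=\lceil x_n\rceil+2$. Hence, on $A_n$,
\[
\Big|\sum_{j\in[N^{(n)},c_n]}Z_j\Big|\le 2W_n,\qquad
W_n:=\max_{0\le k\le L_n}|S_{b_n+k}-S_{b_n}|\ \vee\ \max_{0\le k\le \min(L_n,b_n)}|S_{b_n}-S_{b_n-k}|.
\]
Because $b_n$ is deterministic, this bound makes no use of the joint distribution of $N^{(n)}$ and $(Z_j)_{j\ge1}$ --- which is left entirely unspecified in the statement --- so the argument is automatically robust to any dependence between the stopping index and the summands.

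Each of the two maxima defining $W_n$ is the running maximum of a mean-zero, variance-one random walk of length at most $L_n$ (the second one being the walk read backwards from index $b_n$), so Kolmogorov's maximal inequality bounds each of their tails at level $\lambda$ by $L_n/\lambda^2$. By a union bound there is an absolute constant $C$ with
\[
\bbp\Big(W_n\ge \tfrac{\ep}{2}\sqrt n\Big)\ \le\ \frac{C\,L_n}{\ep^2 n}\ \longrightarrow\ 0 \quad(n\to\infty),
\]
the limit holding because $x_n/n\to 0$. Combined with $\bbp(A_n^c)\to 0$ this gives $\bbp\big(|\sum_{j\in[N^{(n)},c_n]}Z_j|\ge \ep\sqrt n\big)\to 0$, which is precisely the claimed convergence in probability.

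I expect the only delicate point to be bookkeeping: dealing with the possible non-integrality of $\ee[N^{(n)}]$, and the edge cases where $\ee[N^{(n)}]$ is small (so that $b_n-k$ can become negative), both of which are absorbed by intersecting the relevant index ranges with $\bbn$; one should also verify that the passage to $S_a-S_b$ in the second paragraph is an exact identity rather than off by a stray summand. The one genuinely substantive idea is the decoupling trick itself: replacing the random endpoints by a supremum over a deterministic neighbourhood of $b_n$ severs the estimate from the (unknown) dependence structure of $N^{(n)}$ and the $Z_j$'s, after which a maximal inequality of order $\sqrt{x_n}$ against the $\sqrt n$ scaling finishes the job.
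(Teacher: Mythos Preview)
Your proof is correct and follows essentially the same route as the paper's own argument: restrict to the high-probability event $\{|N^{(n)}-\ee[N^{(n)}]|<x_n\}$, bound the randomly indexed sum by a maximum over a deterministic window of width $\approx x_n$ around $\ee[N^{(n)}]$, and finish with Kolmogorov's maximal inequality to obtain a bound of order $x_n/(\ep^2 n)\to 0$. Your version is somewhat more explicit about the integer-part bookkeeping and the two-sided window, and you rightly note that no independence between $N^{(n)}$ and $(Z_j)_{j\ge1}$ is needed, but the substance is identical to the paper's proof.
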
 

\noindent
At this stage, \eqref{eq:max1} is proved and therefore,     
\begin{eqnarray}
\label{trois}
\frac{\lcin-n/m}{\sqrt{2n}}=\max_{\Ccal_n}\min\left(\frac 1m\sum_{i=1}^{m-1} B_n^{(i),X}
\left(\frac{\ee[N_i(X)]}n\right)-R_1(X),\right. \nonumber \\
\left.  \frac 1m\sum_{i=1}^{m-1} 
B_n^{(i),Y}\left(\frac{\ee[N_i(Y)]}n\right)-R_1(Y)
\right)+o_\pp(1), 
\end{eqnarray}
finishing the first part of the proof of Theorem~\ref{thm1.1}.  
Indeed, $(N_1, \dots, N_m)$ is multinomial with parameters $n$ and $(p_1, \dots, p_m)$.  
So, for uniform draws, $\ee[N_i(X)]=\ee[N_i(Y)]=np_i=n/m$. 
Then, by the multivariate Donsker theorem, see Th.~\ref{theo:Donsker}, and scaling, 
\begin{equation}
\label{eq:CVlaw_mB}
\sum_{i=1}^{m-1} \frac 1mB_n^{(i),X}\left(\frac{\ee[N_i(X)]}n\right)
\Longrightarrow \sum_{i=1}^{m-1} \frac 1{m\sqrt m}B^{(i),X}(1), 
\quad n\to +\infty,
\end{equation}
where $\big(B^{(1),X}(t), \dots, B^{(m-1),X}(t)\big)_{0\le t \le 1}$ is a $(m-1)$-dimensional Brownian motion and similarly for $Y$. 
As shown next, the covariance matrix of this Brownian motion at time $t$ is $t\Sigma = t(\sigma_{k,l})_{1\le k,l\le m-1}$, where 
\begin{equation}
\label{eq:Cov_mB}
\Sigma=\left(\begin{array}{cccc}
1 &1/2&\dots&1/2\\
1/2&1&1/2&\vdots\\
\vdots&&\ddots&1/2\\
1/2&\dots&1/2&1
\end{array}
\right).
\end{equation}  
Indeed, $\Sigma$ in \eqref{eq:Cov_mB} is obtained as follows: 
First, since 
$$
\big(B_n^{(1),X}, \dots, B_n^{(m-1),X}\big)
\stackrel{(C_0([0,1]))^{m-1}}{=\!\!\!=\!\!\!=\!\!\!=\!\!\!=\!\!\!=\!\!\!=\!\!\!=\!\!\!=\!\!\!=\!\!\!\Longrightarrow}
\big(B^{(1),X}, \dots, B^{(m-1),X}\big),
$$
while uniform integrability (see Lemma~\ref{lemme:Bn_ui} below) entails
$$
\lim_{n\to+\infty}\cov\big(B_n^{(k),X}(1),B_n^{(l),X}(1)\big)
=\cov\big(B^{(k),X}(1),B^{(l),X}(1)\big)=\sigma_{k,l}.
$$
Next, in the uniform case, Proposition~\ref{prop:S} writes, for $i=1,\dots, m-1$, as
$$
N_m=N_i+\sqrt{2n} B_n^{(i),X}\Big(\frac{N_i}n\Big)+o_\pp(\sqrt n),
$$
so that using also Remark~\ref{rem:Sim}
\begin{equation}
\label{eq:Cov_tech1}
\cov\Big(B_n^{(k),X}\Big(\frac{N_k}n\Big),B_n^{(l),X}\Big(\frac{N_l}n\Big)\Big)=\frac 1{2n}\cov\big(N_m-N_k,N_m-N_l\big)+o(1).
\end{equation}
But $(N_1, \dots, N_m)\sim Mult\big(n, (\frac 1m, \dots, \frac 1m)\big)$, $N_i/n\to 1/m$, $\Var(N_i)=n(m-1)/m^2$ and when, $i\not =j$, $\cov(N_i, N_j)=-n/m^2$. 
Therefore, 
\begin{equation}
\label{eq:Cov_tech2}
\frac 1{2n}\cov\big(N_m-N_k,N_m-N_l)=\frac 1{2n}\Big(\frac{n(m-1)}{m^2}+\frac n{m^2}-\frac n{m^2}-\frac n{m^2}\Big)=\frac1{2m}.
\end{equation}
Since by Lemma~\ref{lemme:Bn_ui}, $\lim_{n\to+\infty}\ee\big[\big(B_n^{(k)}\big(N_k/n\big)-B_n^{(k)}\big(1/m\big)\big)^2\big]=0$, it follows 
\begin{eqnarray}
\nonumber
\lim_{n\to+\infty}\cov\Big(B_n^{(k),X}\Big(\frac{N_k}n\Big),B_n^{(l),X}\Big(\frac{N_l}n\Big)\Big)
&=&\lim_{n\to+\infty}\cov\Big(B_n^{(k),X}\Big(\frac1m\Big),B_n^{(l),X}\Big(\frac1m\Big)\Big)\\
\nonumber
&=&
\cov\Big(B^{(k),X}\Big(\frac1m\Big),B^{(l),X}\Big(\frac1m\Big)\Big)\\
\label{eq:Cov_tech3}
&=&\frac 1m\cov\Big(B^{(k),X}(1),B^{(l),X}(1)\Big).
\end{eqnarray}
Finally, \eqref{eq:Cov_tech1}, \eqref{eq:Cov_tech2}, \eqref{eq:Cov_tech3} ensure the expression \eqref{eq:Cov_mB} for the covariance.
To finish, let us state a lemma, just used above and, whose proof is presented in Appendix~\ref{sec:Appendix_lemma}.

\medskip
\begin{lem}
\label{lemme:Bn_ui}
The sequences $\big(B_n^{(k)}(N_k/n)^2\big)_{n\geq 1}$ and $\big(B_n^{(k)}(1/m)^2\big)_{n\geq 1}$, 
$k=1, \dots, m-1$, are uniformly integrable and  
\begin{equation}
\label{eq:Bn_approxL2}
\lim_{n\to+\infty} \ee\Big[\Big(B_n^{(k)}\Big(\frac{N_k}n\Big)-B_n^{(k)}\Big(\frac1m\Big) \Big)^2\Big]=0.
\end{equation}
\end{lem}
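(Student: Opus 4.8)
The plan is to first establish the uniform integrability of $\big(B_n^{(k)}(1/m)^2\big)_n$ via an $L^4$ bound, then to prove the $L^2$-convergence \eqref{eq:Bn_approxL2}, and finally to deduce the uniform integrability of $\big(B_n^{(k)}(N_k/n)^2\big)_n$ from these two. The starting point is the observation that, by \eqref{eq:PGFN} of Proposition~\ref{lemind} specialized to the uniform case, the iid variables $N_m^{T_k^{j-1},T_k^j}$, $j\ge1$, are geometric (with $\bbp(N_m^{T_k^{j-1},T_k^j}=\ell)=2^{-(\ell+1)}$), so the centred unit-variance variables $Z_j^{(k)}$ of \eqref{eq:Zj} have finite moments of every order; I will use this freely. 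From the usual fourth-moment identity for sums of iid centred variables, $\ee\big[(\sum_{l=1}^{p}Z_l^{(k)})^4\big]=O(p^2)$ uniformly in $p$, and since $B_n^{(k)}(1/m)$ differs from $n^{-1/2}\sum_{l=1}^{[n/m]}Z_l^{(k)}$ only by an interpolation term of $L^4$-norm $O(n^{-1/2})$, this gives $\sup_n\ee\big[B_n^{(k)}(1/m)^4\big]<\infty$, hence $\big(B_n^{(k)}(1/m)^2\big)_n$ is bounded in $L^2$ and so uniformly integrable.

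For \eqref{eq:Bn_approxL2} I would fix $k$, write $N=N_k$, set $x_n=\sqrt n\,\ln n$ and $A_n=\{|N-n/m|\le x_n\}$, so that Hoeffding's inequality \eqref{hoe} makes $\bbp(A_n^c)\le 2e^{-2(\ln n)^2}$ smaller than any power of $n$. On $A_n^c$ I would combine Cauchy--Schwarz with the crude bound $|B_n^{(k)}(N/n)|\le n^{-1/2}\sum_{l=1}^{n+1}|Z_l^{(k)}|$ (valid since $N\le n$), which together with $\ee[(Z_1^{(k)})^4]<\infty$ yields $\ee\big[(B_n^{(k)}(N/n)-B_n^{(k)}(1/m))^4\big]=O(n^2)$, whence $\ee\big[(B_n^{(k)}(N/n)-B_n^{(k)}(1/m))^2\ind_{A_n^c}\big]\le O(n)\,e^{-(\ln n)^2}\to0$. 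On $A_n$ one has $N\in W_n:=[\,[n/m]-x_n-2,\,[n/m]+x_n+2\,]$, and since $B_n^{(k)}$ is piecewise linear, on $A_n$
\[
\big|B_n^{(k)}(N/n)-B_n^{(k)}(1/m)\big|\ \le\ 2\max_{j\in W_n}\Big|\frac1{\sqrt n}\sum_{l\in[j,[n/m]]}Z_l^{(k)}\Big|\ +\ \frac{2}{\sqrt n}\max_{l\in W_n}\big|Z_l^{(k)}\big| .
\]
I would bound the first term in $L^2$ by Doob's $L^2$-maximal inequality over the \emph{deterministic} window $W_n$ of length $O(x_n)$, getting $L^2$-norm $O\big(\sqrt{x_n/n}\,\big)\to0$; and the second term via $\ee\big[\max_{l\le M}|Z_l^{(k)}|^2\big]=O(\sqrt M)$ (tail integration, using the finite fourth moment), getting $L^2$-norm $O\big(n^{-1/2}x_n^{1/4}\big)\to0$. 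Combining the two regimes proves \eqref{eq:Bn_approxL2}.

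To conclude, I would use $B_n^{(k)}(N_k/n)^2\le 2B_n^{(k)}(1/m)^2+2\big(B_n^{(k)}(N_k/n)-B_n^{(k)}(1/m)\big)^2$: the first summand is uniformly integrable by the first paragraph, the second converges to $0$ in $L^1$ by \eqref{eq:Bn_approxL2} and is therefore uniformly integrable, and a sum of two uniformly integrable families is uniformly integrable. I expect the main obstacle to be the $L^2$ --- rather than merely in-probability, as in Lemma~\ref{lemme:devZ} --- control of the stopped increment $B_n^{(k)}(N_k/n)-B_n^{(k)}(1/m)$: the difficulty is that the stopping level $N_k$ is random and correlated with the summands $Z_l^{(k)}$, so a direct optional-stopping argument is unavailable; the remedy above is to localize to the high-probability event $A_n$, where a maximal inequality over a deterministic window applies, and to absorb the negligible event $A_n^c$ with crude moment bounds.
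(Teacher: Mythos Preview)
Your proof is correct and proceeds along a genuinely different route from the paper's. The paper establishes the uniform integrability of $\big(B_n^{(k)}(N_k/n)^2\big)_n$ \emph{first and directly}, by invoking the identity from Proposition~\ref{prop:S} in the form $\sqrt{2n}\,B_n^{(k)}(N_k/n)=N_m-N_k-S_{k,m}^{(n)}$: this rewrites the random-time evaluation as a sum $\sum_{i=1}^n\epsilon_i^{(m,k)}$ of iid \emph{bounded} variables plus a remainder small in $L^2$ (Remark~\ref{rem:Sim}), so Marcinkiewicz--Zygmund yields $\sup_n\ee\big[|B_n^{(k)}(N_k/n)|^p\big]<\infty$ for all $p$. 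With both uniform integrabilities in hand, the paper reduces \eqref{eq:Bn_approxL2} to convergence \emph{in probability}, and checks the latter via Kolmogorov's maximal inequality on $\{|N_k-n/m|\le\sqrt n\ln n\}$. You reverse the logical order: you bypass the structural identity from Proposition~\ref{prop:S}, prove the $L^2$-convergence \eqref{eq:Bn_approxL2} directly (localizing to $A_n$ with a maximal inequality over the deterministic window, and absorbing $A_n^c$ via Cauchy--Schwarz with a crude $O(n^2)$ fourth-moment bound), and only then deduce uniform integrability of $\big(B_n^{(k)}(N_k/n)^2\big)_n$ from that of $\big(B_n^{(k)}(1/m)^2\big)_n$ together with \eqref{eq:Bn_approxL2}. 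Your approach is more self-contained in that it does not call on Proposition~\ref{prop:S}, at the cost of the explicit treatment of $A_n^c$; the paper's route is shorter because the identity with $N_m-N_k$ instantly delivers moment bounds at the random time $N_k$.
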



\subsection{The Increments}
\label{sec:increments}
In this section, we compare the maximum of two different quantities over 
the same set of constraints 
in order to simplify the quantities to be maximized (before simplifying the constraints $\Ccal_n$ 
themselves, in the next section).  
The quantities to compare are:  
\begin{eqnarray}
\label{eq:max11}
\max_{\kk\in \Ccal_n}\left\{\left(\frac 1m\sum_{i=1}^{m-1} B_n^{(i),X}(p_i(X))
-\sum_{i=1}^{m-1} \left(B_n^{(i),X}\left(\frac{N_i^*(X)+k_i}{n}\right)-B_n^{(i),X}
\left(\frac{N_i^*(X)}n\right)\right)\right) \right. \bwedge \nonumber \\ 
\left. \left(\frac 1m\sum_{i=1}^{m-1} B_n^{(i),Y}(p_i(Y))
-\sum_{i=1}^{m-1} \left(B_n^{(i),Y}\left(\frac{N_i^*(Y)+k_i}{n}\right)-B_n^{(i),Y}
\left(\frac{N_i^*(Y)}n\right)\right)\right)\right\},
\end{eqnarray}
and 
\begin{eqnarray}
\label{eq:max12}
\max_{\kk\in \Ccal_n}\left\{\left(\frac 1m\sum_{i=1}^{m-1} B_n^{(i),X}(p_i(X))-\sum_{i=1}^{m-1} 
\left(B_n^{(i),X}\left(\frac{\sum_{j=1}^{i}k_j}{n}\right)-B_n^{(i),X}
\left(\frac{\sum_{j=1}^{i-1}k_j}n\right)\right)\right)
\right.  \bigwedge \nonumber 
\\
\left.  \left(\frac 1m\sum_{i=1}^{m-1} B_n^{(i),Y}(p_i(Y))-\sum_{i=1}^{m-1} 
\left(B_n^{(i),Y}\left(\frac{\sum_{j=1}^{i}k_j}{n}\right)-B_n^{(i),Y}
\left(\frac{\sum_{j=1}^{i-1}k_j}n\right)\right)\right)\right\}.
\end{eqnarray}
Using \eqref{eq:ineg} in Lemma~\ref{lemme:ineg}, their absolute difference is upper-bounded by 
\begin{eqnarray*}
&&\max_{\kk\in \Ccal_n}\left\{
\left|\sum_{i=1}^{m-1}\left( B_n^{(i),X}\left(\frac{N_i^*(X)+k_i}{n}\right)
-B_n^{(i),X}\left(\frac{N_i^*(X)}n\right)\right)\right.\right.
\\
&&
\qquad \qquad \qquad \qquad \qquad \left.-\sum_{i=1}^{m-1} \left(B_n^{(i),X}
\left(\frac{\sum_{j=1}^{i}k_j}{n}\right)-B_n^{(i),X}
\left(\frac{\sum_{j=1}^{i-1}k_j}n\right)\right)\right| 
\\
&&\qquad \qquad \quad \bigvee  \!\left|\sum_{i=1}^{m-1}
\left( B_n^{(i),Y}\left(\frac{N_i^*(Y)+k_i}{n}\right)
-B_n^{(i),Y}\left(\frac{N_i^*(Y)}n\right)\right)\right.
\\
&& \qquad \qquad \qquad \qquad \qquad \left.-\sum_{i=1}^{m-1} 
\left(B_n^{(i),Y}\left(\frac{\sum_{j=1}^{i}k_j}{n}\right)
-B_n^{(i),Y}\left(\frac{\sum_{j=1}^{i-1}k_j}n\right)\right)\right|\Bigg\}
\\
&&\le
\max_{\kk\in \Ccal_n} \Bigg\{
\left|\sum_{i=1}^{m-1}\left(B_n^{(i),X}\left(\frac{(N_i^*(X)+k_i)}{n}\right)-B_n^{(i),X}
\left(\frac{\sum_{j=1}^{i}k_j}n\right)\right)\right|
\\
&&\hskip 4.5cm\bigvee \left|\sum_{i=1}^{m-1}\left(B_n^{(i),Y}
\left(\frac{(N_i^*(Y)+k_i)}{n}\right)-B_n^{(i),Y}
\left(\frac{\sum_{j=1}^{i}k_j}n\right)\right)\right|\Bigg\}
\\
&&+
\max_{\kk\in \Ccal_n} \Bigg\{\left|\sum_{i=1}^{m-1}
\left(B_n^{(i),X}\left(\frac{N_i^*(X)}n\right)-B_n^{(i),X}
\left(\frac{\sum_{j=1}^{i-1}k_j}n\right)\right)\right|
\\
&&\hskip 4.5cm\bigvee \left|\sum_{i=1}^{m-1}\left(B_n^{(i),Y}\left(\frac{N_i^*(Y)}n\right)
-B_n^{(i),Y}\left(\frac{\sum_{j=1}^{i-1}k_j}n\right)\right)\right|\Bigg\}. 
\end{eqnarray*}
Recall that $N_1^*(X)=N_1^*(Y)=0$.  Hence, for $i=1$, 
$$
B_n^{(i),X}\left(\frac{N_i^*(X)+k_i}{n}\right)-B_n^{(i),X}
\left(\frac{\sum_{j=1}^{i}k_j}{n}\right)
=B_n^{(i),X}\left(\frac{N_i^*(X)}{n}\right)-B_n^{(i),X}
\left(\frac{\sum_{j=1}^{i-1}k_j}{n}\right)
=0,  
$$ 
with the same property for functionals relative to $Y$.  
Therefore, we are left with investigating terms of the form
\begin{eqnarray}\label{eq:form1}
&&\max_{\kk\in \Ccal_n} \left\{\left|B_n^{(i),X}
\left(\frac{N_i^*(X)+k_i}{n}\right)
-B_n^{(i),X}\left(\frac{\sum_{j=1}^{i}k_j}{n}\right)\right| \right.\nonumber \\
&& \hskip 3.5cm \left. \bigvee \left|B_n^{(i),Y}\left(\frac{N_i^*(Y)+k_i}n\right)-
B_n^{(i),Y}\left(\frac{\sum_{j=1}^{i}k_j}{n}\right)\right|\right\},
\end{eqnarray}
and 
\begin{equation}
\label{eq:form2}
\max_{\kk\in \Ccal_n}
\left\{\left|B_n^{(i),X}\!\left(\frac{N_i^*(X)}n\right)
-B_n^{(i),X}\!\left(\frac{\sum_{j=1}^{i-1}k_j}n\right)\right|
\bigvee \left|B_n^{(i),Y}\!\left(\frac{N_i^*(Y)}n\right)
-B_n^{(i),Y}\!\left(\frac{\sum_{j=1}^{i-1}k_j}n\right)\right|\right\},
\end{equation}
for $2\leq i\leq m-1$. Above, all the quantities considered only depend 
on a single sequence, say $X$  or $Y$, 
except for the constraints in $\Ccal_n$ which depend on both $X$ and $Y$.  
However, 
\begin{equation}
\label{eq:Cni*}
\Ccal_{n,i}(k_1,\dots, k_{i-1})\subset {\Ccal_{n,i}^*}(X):=\big\{{\kk} = (k_1, \dots, k_{m-1}):0\leq k_i
\leq N_i(X)- N_i^*(X)\big\},
\end{equation} 
(resp.~$\Ccal_{n,i}(k_1,\dots, k_{i-1}) \subset {\Ccal_{n,i}^*}(Y)$) and the same for $\Ccal_{n,1}$, and so upper-bounding, in \eqref{eq:form1} and 
\eqref{eq:form2}, the inner maxima by sums and the maxima over $\Ccal$ by maxima over 
\begin{equation}
\label{eq:Cn*}
{\Ccal_n^*}(X):= \bigcap_{i=1}^{m-1}{\Ccal_{n,i}^*}(X),
\end{equation}
(resp.~${\Ccal_n^*}(Y)$),
we are left with investigating, for $2\leq i\leq m-1$, the convergence in probability of terms of the form  
\begin{equation}\label{eq:form12}
\max_{\kk\in{\Ccal_n^*}(X)} 
\left\{\left|B_n^{(i),X}\left(\frac{N_i^*(X)+k_i}{n}\right)
-B_n^{(i),X}\left(\frac{\sum_{j=1}^{i}k_j}{n}\right)\right| \right\},
\end{equation}
and 
\begin{equation}
\label{eq:form22}
\max_{\kk\in{\Ccal_n^*}(X)} 
\left\{\left|B_n^{(i),X}\left(\frac{N_i^*(X)}n\right)
-B_n^{(i),X}\left(\frac{\sum_{j=1}^{i-1}k_j}n\right)\right|\right\},
\end{equation}
and, similarly with $X$ replaced by $Y$.  
Omitting the reference to either $X$ or $Y$, 
the terms to control are, from \eqref{interpol} and for 
each, $2\le i \le m-1$, of the form:   
\begin{equation}
\label{eq:max33}
\max_{\kk\in\Ccal_n^*}\left|\sum_{j=k_1+\cdots+k_i+1}^{N_i^*+k_i} 
\frac{Z_j^{(i)}}{\sqrt n}\right|,
\end{equation}
and 
\begin{equation}
\label{eq:max34}
\max_{\kk\in\Ccal_n^*}\left|\sum_{j=k_1+\cdots+k_{i-1}+1}^{N_i^*} 
\frac{Z_j^{(i)}}{\sqrt n}\right|, 
\end{equation}
where the $Z_j^{(i)}$, $j\ge 1$, are defined in \eqref{eq:Zj} 
and where 
$$
\Ccal_n^* =\bigcap_{i=1}^{m-1}\Ccal_{n,i}^*, \quad \mbox{ with } 
\Ccal_{n,i}^*=\big\{{\kk} = (k_1, \dots, k_{m-1}):0\leq k_i
\leq N_i- N_i^*\big\}. 
$$ 
In \eqref{eq:max33}, \eqref{eq:max34} and henceforth, we write $\sum_{j=n_1}^{n_2}$ regardless of the order of $n_1$ and $n_2$, i.e., by convention this sum is $\sum_{j=n_2}^{n_1}$ when $n_2<n_1$. 

Since \eqref{eq:max34} is similar, but easier to tackle 
than \eqref{eq:max33}, we only deal with  \eqref{eq:max33}.  
Again, as in Section~\ref{sec:linear}, 
let $D_n^i=\big\{\left|N_i-\bbe[N_i]\right| \le \sqrt n\ln n\big\}$ 
for $i=1,2, \dots, m-1$, and, thus, for $\varepsilon>0$,  
\begin{eqnarray} 
\label{eq:maj61a}
\lefteqn{\pp\left(\max_{\kk\in\Ccal_n^*} 
\left|\sum_{j=k_1+\cdots+k_i+1}^{N_i^*+k_i}  
\frac{Z_j^{(i)}}{\sqrt n}\right|\ge\varepsilon\right)}\\
\nonumber
&\leq&\pp\left(\left\{\max_{\kk\in\Ccal_n^*} 
\left|\sum_{j=k_1+\cdots+k_i+1}^{N_i^*+k_i}  
Z_j^{(i)}\right|\ge\varepsilon\sqrt n\right\}\cap
\bigcap_{i=1}^{m-1}D_n^i\right)+\sum_{i=1}^{m-1}\pp\big((D_n^i)^c\big).
\end{eqnarray}
Let $\Ccal_n^{i-1}=\bigcap_{j=1}^{i-1}\big\{k_j \le \bbe[N_j] +\sqrt n\ln n\big\}$
and let $\Ccal^{\#}_{n,i}$ be the set of indices $k_1,\dots, k_i$ 
\begin{equation}
\label{eq:Cni**}
\Ccal^{\#}_{n,i}
=\Ccal_n^{i-1}\cap \Big\{\bbe[N_i^*]\le \ell_i=k_1+\cdots +k_i
\le \bbe[N_i] +\sqrt n\ln n -(N_i^* - \bbe[N_i^*]) \Big\} 
\end{equation}
where we set $\ell_i:= k_1+\cdots +k_i$. 
Since under $\bigcap_{i=1}^{m-1}D_n^i$, $\Ccal_{n,i}^*\subset\big\{k_j\le \bbe[N_j] +\sqrt n\ln n\big\}$ 
and since Proposition~\ref{prop:law_Ni}, specialized to the uniform case, gives $\ee[N_i^*]=\sum_{j=1}^{i-1} k_j$, it follows that 
$\Ccal_n^*\subset \Ccal^{\#}_{n,i}$ and \eqref{eq:maj61a} is thus further upper-bounded by 
\begin{equation} 
\label{eq:maj61}
\pp\left(\max_{\kk\in\Ccal^{\#}_{n,i}} 
\left|\sum_{j=\ell_i + 1}^{\ell_i+N_i^*-(k_1+\cdots+k_{i-1})} Z_j^{(i)}\right|
\ge\varepsilon\sqrt n\right) +\sum_{i=1}^{m-1}\pp\big((D_n^i)^c\big).
\end{equation}
Now, in view of \eqref{hoe}, it is enough to show the convergence to zero  
of the first term on the right-hand side of \eqref{eq:maj61}.
To do so, set $E_n^1=\Omega$ and, for $2\leq i\leq m-1$,
$$
E_n^i(k_1, \dots, k_{i-1})
=\Big\{|N_i^*(k_1, \dots, k_{i-1})-\ee[N_i^*(k_1, \dots, k_{i-1})]|\leq x_n\Big\},
$$
with 
\begin{equation}
\label{eq:xn}
x_n=\sqrt n \ln n, 
\end{equation}
and let 
\begin{equation}
\label{eq:Eni}
 E_n^i=\bigcap_{(k_1, \dots, k_{i-1})\in \Ccal_n^{i-1}} E_n^i(k_1, \dots, k_{i-1}).
\end{equation}
Our next goal is to show that asymptotically, $E_n^i$ has full probability.   
\begin{prop}
\label{prop:maxC1}
Let $2\le i \le m-1$, then $\lim_{n\to+\infty} \pp\big((E_n^i)^c\big)=0$.
\end{prop}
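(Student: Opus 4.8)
The plan is to turn $(E_n^i)^c$ into a union, over the at most polynomially many index vectors of $\Ccal_n^{i-1}$, of one‑dimensional large‑deviation events for $N_i^*$, each controlled by an explicit Chernoff bound, and then to invoke the calibration of $x_n$ in \eqref{eq:xn}.

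First I would record the exact law of $N_i^*$ in the uniform case. Specializing Proposition~\ref{prop:law_Nij} to $p_1=\dots=p_m=1/m$, the probability generating function of $N_{i,j}^*$ is $(2-x)^{-k_j}$, and by Proposition~\ref{prop:law_Ni} the variables $N_{i,j}^*$, $1\le j\le i-1$, are independent; hence, writing $K_i:=k_1+\dots+k_{i-1}$, the sum $N_i^*=\sum_{j=1}^{i-1}N_{i,j}^*$ is equal in law to $\sum_{\ell=1}^{K_i}(G_\ell-1)$, where the $G_\ell$ are iid with the geometric law ${\cal G}(1/2)$ on $\{1,2,\dots\}$. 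In particular $\ee[N_i^*]=K_i$, so that $N_i^*-\ee[N_i^*]$ is a sum of $K_i$ iid centered variables $G_\ell-2$, each of variance $2$ and with the explicit moment generating function $\ee[e^{\lambda(G_1-2)}]=\big((1/2)e^{-\lambda}\big)/\big(1-(1/2)e^{\lambda}\big)<\infty$ for $\lambda<\ln 2$.

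Next I would extract from this a uniform sub‑exponential tail bound. Since $\lambda\mapsto\ee[e^{\lambda(G_1-2)}]$ is finite and smooth near $0$ with value $1$, vanishing first derivative, and second derivative $\Var(G_1)=2$, there are constants $C,\lambda_0>0$, depending only on ${\cal G}(1/2)$, with $\ee[e^{\lambda(G_1-2)}]\le e^{C\lambda^2}$ for $|\lambda|\le\lambda_0$. A standard optimization of the Cramér bound $\pp\big(\sum_{\ell=1}^{K}(G_\ell-2)\ge t\big)\le e^{-\lambda t+CK\lambda^2}$ over $\lambda\in(0,\lambda_0]$ (and symmetrically for the lower tail) then yields, for every integer $K\ge1$ and every $t>0$,
$$\pp\big(\,|N_i^*-\ee[N_i^*]|\ge t\,\big)\le 2\exp\!\big(-c'\min(t^2/K,\,t)\big),$$
for a constant $c'>0$ independent of $K$ and of $(k_1,\dots,k_{i-1})$ (the case $K=0$ being trivial, since then $N_i^*\equiv0$). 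Taking $t=x_n=\sqrt n\ln n$ and using that, for $(k_1,\dots,k_{i-1})\in\Ccal_n^{i-1}$ and $n$ large, $K_i=\sum_{j=1}^{i-1}k_j\le(i-1)(\ee[N_j]+x_n)\le 2n$, one gets $\min(x_n^2/K_i,\,x_n)\ge (\ln n)^2/2$ for $n$ large, hence
$$\sup_{(k_1,\dots,k_{i-1})\in\Ccal_n^{i-1}}\pp\big(E_n^i(k_1,\dots,k_{i-1})^c\big)\le 2e^{-c''(\ln n)^2}$$
for some $c''>0$ and all $n$ large.

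Finally, a crude union bound closes the argument. Since $\Ccal_n^{i-1}=\bigcap_{j=1}^{i-1}\{0\le k_j\le \ee[N_j]+x_n\}$ has at most $(n/m+x_n+1)^{i-1}\le n^{i-1}$ elements for $n$ large,
$$\pp\big((E_n^i)^c\big)\le\sum_{(k_1,\dots,k_{i-1})\in\Ccal_n^{i-1}}\pp\big(E_n^i(k_1,\dots,k_{i-1})^c\big)\le n^{i-1}\cdot 2e^{-c''(\ln n)^2}\longrightarrow 0,$$
because $(\ln n)^2$ dominates $(i-1)\ln n$. I do not expect a genuine obstacle here: identifying the law of $N_i^*$ and the exponential moment bound are routine consequences of Propositions~\ref{prop:law_Nij}--\ref{prop:law_Ni}. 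The one point requiring care is the calibration of the threshold $x_n=\sqrt n\ln n$ in \eqref{eq:xn}: it must be large enough that $x_n^2/n=(\ln n)^2$ beats $\log|\Ccal_n^{i-1}|=O(\ln n)$, yet small enough ($x_n=o(n)$) to keep the Chernoff bound in its Gaussian regime and to stay compatible with the Hoeffding estimate \eqref{hoe}; both requirements are met, and the remaining work is only the bookkeeping of constants.
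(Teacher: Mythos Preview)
Your proof is correct and follows the same overall architecture as the paper: a union bound over the at most $n^{i-1}$ index vectors in $\Ccal_n^{i-1}$, followed by a Chernoff-type concentration bound for $N_i^*-\ee[N_i^*]$, which in the uniform case is a sum of $K_i=k_1+\dots+k_{i-1}$ iid centered geometric variables.

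Where you diverge is in the execution of the concentration step. The paper carries out the Chernoff optimization explicitly, obtaining closed-form bounds $\Theta_k^r(x_n)$ and $\Theta_k^l(x_n)$ (see \eqref{eq:thetar}, \eqref{eq:thetalbis}), and then invokes the technical Lemma~\ref{lemme:ThetaF} (the $K_n(x)$ estimate) together with a case analysis on the relative sizes of $K_i$ and $x_n$ to reach the final decay. You instead recognize $G_\ell-2$ as a sub-exponential variable with bounded MGF near the origin and apply the off-the-shelf Bernstein-type bound $\pp(|N_i^*-\ee[N_i^*]|\ge t)\le 2\exp(-c'\min(t^2/K_i,t))$, which immediately gives $e^{-c''(\ln n)^2}$ once you plug in $t=x_n$ and $K_i\le 2n$. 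This is cleaner: it bypasses Lemma~\ref{lemme:ThetaF} entirely and removes the case splitting. The paper's explicit route does have the advantage that the same functions $\Theta_k^\bullet$ are reused later in Proposition~\ref{prop:Zjdev} and in the estimate \eqref{eq:techmax}, so the investment is amortized; but for Proposition~\ref{prop:maxC1} in isolation your argument is both shorter and more transparent.
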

In order to prove Proposition~\ref{prop:maxC1}, we first need the following technical result, proved in Appendix~\ref{sec:Appendix_lemma}:   
\begin{lem}
\label{lemme:ThetaF}
For $x\in[-n,+\infty)$, let  
$$
K_n(x)=\frac{(x+2n)^{x+2n}}{(2x+2n)^{x+n}(2n)^n}.
$$  
Then, for some constants $c, C\in (0,+\infty)$,  
\begin{equation}
\label{eq:ThetaF}
K_n(x)\le C\exp\left(-cn\min\Big(\frac {|x|}n, \frac{x^2}{n^2}\Big)\right).
\end{equation}
\end{lem}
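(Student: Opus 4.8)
The plan is to reduce the lemma to a one–variable concavity estimate via the substitution $x=nu$ with $u\in[-1,+\infty)$. Collecting the powers of $n$ and the factors $2$ and $2n$, a direct computation gives
\[
K_n(nu)=\left(\frac{(u+2)^{\,u+2}}{2^{\,u+2}(u+1)^{\,u+1}}\right)^{\!n}=e^{n\phi(u)},\qquad
\phi(u):=(u+2)\ln(u+2)-(u+1)\ln(u+1)-(u+2)\ln 2,
\]
where the convention $0\ln 0=0$ keeps $\phi$ continuous up to the endpoint $u=-1$. Since $\min(|x|/n,x^2/n^2)=\min(|u|,u^2)$ with $u=x/n$, it suffices to show that $\phi(u)\le -c\min(|u|,u^2)$ on $[-1,+\infty)$ for some $c>0$; the claim then holds with $C=1$.

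First I would record the elementary facts about $\phi$: $\phi(0)=0$, $\phi'(u)=\ln\!\big((u+2)/(2(u+1))\big)$ (so $\phi'(0)=0$), and $\phi''(u)=-1/\big((u+1)(u+2)\big)<0$ on $(-1,+\infty)$. Hence $\phi$ is strictly concave with a unique maximum at $u=0$, where it vanishes, so $\phi<0$ on $[-1,+\infty)\setminus\{0\}$ (this already yields $K_n\le 1$, but not the decay). For $|u|\le 1$ — which also covers all of $[-1,0)$ — a second-order Taylor expansion at $0$ gives $\phi(u)=-\tfrac14u^2+o(u^2)$, so $\psi(u):=\phi(u)/u^2$ (with $\psi(0):=-\tfrac14$) extends continuously to the compact interval $[-1,1]$ and is strictly negative there; hence $\sup_{[-1,1]}\psi=-c_1<0$, i.e. $\phi(u)\le-c_1u^2$ on $[-1,1]$. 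For $u\ge 1$, a short computation (e.g. the mean value theorem applied to $t\mapsto t\ln t$) shows $\phi(u)/u\to-\ln 2$ as $u\to+\infty$; since $u\mapsto\phi(u)/u$ is continuous and strictly negative on $[1,+\infty)$ with a strictly negative limit at infinity, $\sup_{u\ge1}\phi(u)/u=-c_2<0$, i.e. $\phi(u)\le-c_2u$ on $[1,+\infty)$.

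To conclude, put $c=\min(c_1,c_2)$. On $[-1,1]$ one has $\min(|u|,u^2)=u^2$, so $\phi(u)\le-c_1u^2\le-c\min(|u|,u^2)$; on $[1,+\infty)$ one has $\min(|u|,u^2)=u$, so $\phi(u)\le-c_2u\le-c\min(|u|,u^2)$. Hence $\phi(u)\le-c\min(|u|,u^2)$ throughout $[-1,+\infty)$, and substituting $u=x/n$ yields $K_n(x)=e^{n\phi(x/n)}\le\exp\!\big(-cn\min(|x|/n,x^2/n^2)\big)$. I do not anticipate a genuine obstacle: the argument is elementary calculus, and the only delicate points are keeping $\phi$ continuous at $u=-1$ (handled by the convention $0\ln 0=0$) and checking that the quadratic bound near $0$ and the linear bound near $\infty$ dovetail with the single function $\min(|u|,u^2)$, which they do since its crossover occurs exactly at $|u|=1$.
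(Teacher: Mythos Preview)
Your proof is correct and is in fact cleaner than the paper's. The key observation you make --- that the substitution $u=x/n$ yields the exact self-similar form $K_n(nu)=e^{n\phi(u)}$ for a single function $\phi$ independent of $n$ --- reduces the two-parameter problem to a one-variable inequality, which you then dispatch by concavity plus compactness on $[-1,1]$ and a limit argument on $[1,+\infty)$.

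The paper instead splits into three asymptotic regimes ($|x|\ll n$, $x\gg n$, $x\approx n$) and in each performs a Taylor or asymptotic expansion of $\ln K_n(x)$, reading off the leading order $-x^2/(4n)$, $-x\ln 2$, or $-c(\alpha)n$ respectively. That approach makes the constants in each regime explicit, but is sketchier about uniformity: the $o(\cdot)$ remainders are asymptotic in the ratio $x/n$, so one still has to argue that the three regimes can be patched together with a single pair $(c,C)$. Your compactness argument sidesteps this entirely, at the cost of not producing explicit constants. Both arrive at the same place; yours is more self-contained, the paper's more computational.
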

We proceed now to the proof of Proposition~\ref{prop:maxC1}:
\begin{proof}(Prop.~\ref{prop:maxC1})
Clearly, 
\begin{eqnarray*}
\pp\big((E_n^i)^c\big)&\le& \sum_{(k_1, \dots, k_{i-1})\in \Ccal_n^{i-1}}
\pp\big((E_n^i(k_1, \dots, k_{i-1}))^c\big)\\ 
&\leq& n^{i-1} 
\max_{(k_1, \dots, k_{i-1})\in \Ccal_n^{i-1}}\pp\big((E_n^i(k_1, \dots, k_{i-1}))^c\big). 
\end{eqnarray*}
Therefore, to prove the lemma, it is enough to show that:  
\begin{equation}
\label{eq:maxC1}
\lim_{n\to+\infty} n^{i-1} 
\max_{(k_1, \dots, k_{i-1})\in \Ccal_n^{i-1}}\pp\big((E_n^i(k_1, \dots, k_{i-1}))^c\big) = 0.
\end{equation} 
Now, for each $2\leq i\leq m-1$, Propositions \ref{prop:law_Nij} and \ref{prop:law_Ni} assert that,  
\begin{equation*}
\label{eq:N2N2j}
N_i^*=N_i^*(k_1, \dots, k_{i-1})=\sum_{j=1}^{i-1} N_{i,j}^*,  
\end{equation*}
where the $(N_{i,j}^*)_{1\le j\le i-1}$ are independent and with probability generating function  
\begin{equation*}
\label{eq:Ni*}
\ee\left[x^{N_{i,j}^*}\right]=\left(\frac 1{2-x}\right)^{k_j}. 
\end{equation*}
Next, 
\begin{eqnarray}
\label{eq:ab0}
\hspace{-.8cm} \pp\left((E_n^i(k_1, \dots, k_{i-1}))^c\right)
&=&\pp\big(\left|N_i^*-\ee[N_i^*]\right| > x_n\big) \nonumber\\
&=&\pp\left(\sum_{j=1}^{i-1}\!\left(N_{i,j}^*-k_j\right) > x_n\!\right)+
\pp\left(\sum_{j=1}^{i-1}\!\left(k_j- N_{i,j}^*\right) > x_n\!\right).
\end{eqnarray}
The first term in \eqref{eq:ab0} is bounded by 
$\Theta_{k_1+\cdots+k_{i-1}}^r(x_n)$, where
\begin{eqnarray}
\label{eq:thetar0} 
\Theta_k^r(x)&:=&
\min_{t>0}\Big(\exp\left(-\left(t(x+k) +k\ln(2-e^t)\right)\right)\Big)\\
\label{eq:thetar}
&=&\frac{(x+2k)^{x+2k}}{(2x+2k)^{x+k}(2k)^k}, 
\end{eqnarray}
since the minimization in \eqref{eq:thetar0} occurs at $t=\ln\big((2x+2k)/(x+2k)\big)$. 

\medskip\noindent
The second term in \eqref{eq:ab0} is bounded by $\Theta_{k_1+\cdots+k_{i-1}}^l(x_n)$, where
\begin{eqnarray}
\label{eq:thetal}
\Theta_{k}^l(x)&:=&
\min_{t>0}\Big(\exp\left(-\left(t(x-k)+k\ln(2-e^{-t})\right)\right)\Big)\\
\label{eq:thetalbis}
&=& \frac{(2k-x)^{2k-x}}{(2k-2x)^{k-x}(2k)^k},
\end{eqnarray}
observing that, for $x\leq k$, the minimization in \eqref{eq:thetal} occurs 
at $t=\ln\big((2k-x)/(2k-2x)\big)$.  

\medskip\noindent
From the previous bounds and \eqref{eq:ab0}, it is clear that 
\eqref{eq:maxC1} will follow from 
\begin{equation}
\label{eq:thetabullet}
\lim_{n\to+\infty} n^{i-1}\max_{(k_1, \dots, k_{i-1})\in {\Ccal}_n^{i-1}} 
\Theta_{k_1+\cdots+k_{i-1}}^\bullet(x_n)=0, 
\end{equation}
for $\bullet\in\{l,r\}$.  To obtain such a limit, we make use 
of Lemma~\ref{lemme:ThetaF}, with $x=x_n=\sqrt n \ln (n)$, 
noting also that 
${\Ccal}_n^{i-1} \subset \big\{(k_1, \dots, k_{i-1}): k_1+\cdots+k_{i-1}
\le \sum_{j=1}^{i-1}\bbe[N_j] + (i-1)\sqrt n \ln n\big\} \subset 
\big\{(k_1, \dots, k_{i-1}): k_1+\cdots+k_{i-1}
\le (i-1)(\max_{j=1, \dots, i-1}p_j n+\sqrt n \ln n)\big\}$.  
 
First, for $\bullet=r$, when $k_1+\cdots+k_{i-1}\le x_n$, \eqref{eq:ThetaF} writes as
\begin{eqnarray*}
\Theta_{k_1+\cdots+k_{i-1}}^r(x_n)&\leq&
C\exp\!\Big(\!\!-c(k_1+\cdots+k_{i-1})\min\!\Big(\frac{x_n}{k_1+\cdots+k_{i-1}},
\Big(\frac{x_n}{k_1+\cdots+k_{i-1}}\Big)^2\Big)\!\Big) \\
&=&C\exp(-cx_n), 
\end{eqnarray*}
so that 
\begin{eqnarray*}
n^{i-1}\max_{k_1+\cdots+k_{i-1}\leq x_n} \Theta_{k_1+\cdots+k_{i-1}}^r(x_n)
\leq Cn^{i-1} e^{-c \sqrt n \ln n} \to 0, \quad n\to+\infty,
\end{eqnarray*}
where above, and below, $C$ is a finite positive constant whose value might change from 
a line to another. 
For $x_n\leq k_1+\cdots+k_{i-1}\leq (i-1)(n\max_{j=1, \dots, i-1}p_j+\sqrt n\ln n)
= (i-1)(n/m+\sqrt n\ln n)$, 
\eqref{eq:ThetaF} writes as 
\begin{eqnarray*}
\!\!\Theta_{k_1+\cdots+k_{i-1}}^r(x_n)&\leq&C\exp\!\Big(\!\!-c(k_1+\cdots+k_{i-1})
\!\min\!\Big(\frac{x_n}
{k_1+\cdots+k_{i-1}},\Big(\frac{x_n}{k_1+\cdots+k_{i-1}}\Big)^2\Big)\!\Big) \\
&=&C\exp\Big(-c\frac{x_n^2}{k_1+\cdots+k_{i-1}}\Big)\\
&\le& C\exp\left(-c\frac{x_n^2}{(i-1)(n/m+\sqrt n\ln n)}\right), 
\end{eqnarray*}
so that 
\begin{eqnarray*}
&&n^{i-1}\max_{x_n\leq k_1+\cdots+k_{i-1}\leq (i-1)(n/m+\sqrt n \ln n)} 
\Theta_{k_1+\cdots+k_{i-1}}^r(x_n)\\
&&\hskip 2.5cm 
\leq n^{i-1}\exp\left(-c\frac{n(\ln n)^2}{(i-1)(n/m+\sqrt n\ln n)}\right)\to 0, 
\quad n\to+\infty, 
\end{eqnarray*}
guaranteeing \eqref{eq:thetabullet} with $\bullet=r$.  

Next, let $\bullet=l$ and consider the following three cases: 
$k_1+\cdots+k_{i-1}\leq x_n/2$, $x_n/2\leq k_1+\cdots+k_{i-1}\leq x_n$ 
and $x_n\leq k_1+\cdots+k_{i-1}
\leq (i-1)(n\max_{j=1, \dots, i-1}p_j+\sqrt n\ln n)=(i-1)(n/m+\sqrt n\ln n)$. 
When $k_1+\cdots+k_{i-1}\leq x_n/2$, \eqref{eq:thetal} ensures that for all $t>0$: 
\begin{eqnarray}
\label{eq:Thetar11}
\Theta_{k_1+\cdots+k_{i-1}}^l(x_n)
&\leq& \exp\Big(t(k_1+\cdots+k_{i-1}-x_n)
-(k_1+\cdots+k_{i-1})\ln (2-e^{-t})\Big)\nonumber\\
&\leq& \exp\Big(-\frac t2x_n\Big).
\end{eqnarray}
When $x_n/2\leq k_1+\cdots+k_{i-1}\leq x_n$, \eqref{eq:thetal} 
ensures that for all $t>0$: 
\begin{eqnarray}
\label{eq:Thetar12}
\Theta_{k_1+\cdots+k_{i-1}}^l(x_n)&\leq& 
\exp\Big(t(k_1+\cdots+k_{i-1}-x_n)-(k_1+\cdots+k_{i-1})\ln (2-e^{-t})\Big)\nonumber \\
&\leq& \exp\Big(-\frac{x_n}{2} \ln(2-e^{-t})\Big).
\end{eqnarray}
When $x_n\leq k_1+\cdots+k_{i-1}\leq (i-1)(n/m+\sqrt n\ln n)$, \eqref{eq:thetalbis} and 
\eqref{eq:ThetaF} in Lemma \ref{lemme:ThetaF} ensure that: 
\begin{eqnarray}
\Theta_{k_1+\cdots+k_{i-1}}^l(x_n)&\leq& 
C\exp\!\Big(\!-c(k_1+\cdots+k_{i-1})\!\min\!\Big(\frac{x_n}{k_1+\cdots+k_{i-1}},
\Big(\frac{x_n}{k_1+\cdots+k_{i-1}}\Big)^2\Big)\!\Big) \nonumber\\
&=&C\exp\Big(-c\frac{x_n^2}{k_1+\cdots+k_{i-1}}\Big)\nonumber\\
&\leq&C\exp\Big(-c\frac{n(\ln n)^2}{(i-1)(n/m+\sqrt n\ln n)}\Big).  
\label{eq:Thetar13} 
\end{eqnarray}
Gathering together the bounds \eqref{eq:Thetar11}, \eqref{eq:Thetar12} and 
\eqref{eq:Thetar13} proves \eqref{eq:thetabullet}, for $\bullet=l$.  
Combining this last fact with the corresponding result 
for $\bullet=r$, and via \eqref{eq:thetabullet} and \eqref{eq:maxC1}, proves Proposition~\ref{prop:maxC1}. 
\end{proof}

\medskip\noindent
Now, thanks to Proposition~\ref{prop:maxC1}, to prove the convergence to zero, 
as $n\to+\infty$, of the first term on the right-hand side of \eqref{eq:maj61}, 
it is enough to prove the same result for
\begin{equation}
\label{eq:ppZ}
\pp\left(\left\{\max_{\kk\in\Ccal^{\#}_{n,i}} 
\left|\sum_{j=\ell_i + 1}^{\ell_i+N_i^*-(k_1+\cdots+k_{i-1})} Z_j^{(i)}\right|
\ge\varepsilon\sqrt n\right\}\cap E_n^i \right), 
\end{equation}
where the $Z_j^{(i)}$ are given in \eqref{eq:Zj}, i.e., 
$Z_j^{(i)}=\big(N_m^{T_i^{j-1}, T_i^j}-1\big)/\sqrt 2$, $i=1, \dots, m-1$, $j\ge 1$.  
Our next elementary proposition, the ultimate before closing this section, provides tail estimates on the partial sums of the $Z_j$ (omitting the indices $i$ for a while).   

\begin{prop}
\label{prop:Zjdev}
Let $(Z_j)_{j\geq1}$ be iid random variables as in \eqref{eq:Zj}.  Then, for suitable positive and finite constants $c$ and $C$, all $x>0$, and all positive integer $k$, 
\begin{eqnarray}
\label{eq:Zjdevaa} 
\pp\Big(\sum_{j=1}^k Z_j\geq x\Big)&\leq& \min_{t>0}\!\Big(\!\exp\!\left(\!-\left(t(x\sqrt 2+k)+k\ln(2-e^{t})\right)\right)\!\Big)\!=:\Theta_k^r(x\sqrt 2),\\
\label{eq:Zjdeva}
\pp\Big(\sum_{j=1}^k Z_j\geq x\Big)&\leq& C\exp\Big(-c\min\Big(\frac xk, 
\Big(\frac xk\Big)^2\Big)\Big),\\
\label{eq:Zjdevb}
\pp\Big(\sum_{j=1}^k Z_j\leq -x\Big)&\leq& 
\min_{t>0}\!\Big(\!\exp\!\left(\!-\left(t(x\sqrt 2-k)+k\ln(2-e^{-t})\!\right)\!\right)\!\!\Big)
\!=:\Theta_k^l(x\sqrt 2),\\
\label{eq:Zjdevbb}
\pp\Big(\sum_{j=1}^k Z_j\leq -x\Big)&\leq&C\exp\Big(-c\min\Big(\frac xk, 
\Big(\frac xk\Big)^2\Big)\Big),\quad \mbox{ for } x\leq k.
\end{eqnarray}
\end{prop}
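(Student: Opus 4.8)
The plan is to reduce all four estimates to a single Chernoff bound for a negative-binomial partial sum with explicit moment generating function. Dropping the superscript $i$, set $N_j := N_m^{T_i^{j-1},T_i^j}$, so that by \eqref{eq:Zj} one has $Z_j=(N_j-1)/\sqrt2$ and $\sum_{j=1}^k Z_j=(S_k-k)/\sqrt2$ with $S_k:=\sum_{j=1}^k N_j$. By Proposition~\ref{lemind} the $N_j$ are iid, and in the uniform case \eqref{eq:PGFN} gives $\ee\big[x^{N_1}\big]=1/(2-x)$; hence $\ee\big[e^{tS_k}\big]=(2-e^t)^{-k}$ for $t<\ln2$ and $\ee\big[e^{-tS_k}\big]=(2-e^{-t})^{-k}$ for every $t>0$. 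I would also note that $Z_j\ge-1/\sqrt2$ a.s.\ while $Z_j$ is unbounded above: this asymmetry is the reason for the restriction $x\le k$ in \eqref{eq:Zjdevbb}, since $\sum_{j=1}^k Z_j\ge-k/\sqrt2$ forces $\pp\big(\sum_{j=1}^kZ_j\le-x\big)=0$ once $x>k/\sqrt2$.

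For the upper tail, $\{\sum_{j=1}^kZ_j\ge x\}=\{S_k\ge k+x\sqrt2\}$, and Markov's inequality applied to $e^{tS_k}$, $0<t<\ln2$, gives $\pp\big(\sum_{j=1}^kZ_j\ge x\big)\le\exp\big(-(t(x\sqrt2+k)+k\ln(2-e^t))\big)$. The right-hand side equals $1$ at $t=0$, has negative derivative there, and diverges as $t\uparrow\ln2$, so its infimum over $t>0$ is attained at the interior point $e^t=2(x\sqrt2+k)/(x\sqrt2+2k)$; that infimum is $\Theta_k^r(x\sqrt2)$ by the definition \eqref{eq:thetar0}, proving \eqref{eq:Zjdevaa}, and substituting the optimal $t$ yields the closed form \eqref{eq:thetar}, i.e.\ $K_k(x\sqrt2)$ in the notation of Lemma~\ref{lemme:ThetaF}. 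The lower tail is parallel: applying Markov's inequality to $e^{-tS_k}$, $t>0$, to $\{S_k\le k-x\sqrt2\}$ gives $\pp\big(\sum_{j=1}^kZ_j\le-x\big)\le\exp\big(-(t(x\sqrt2-k)+k\ln(2-e^{-t}))\big)$, whose infimum over $t>0$ is $\Theta_k^l(x\sqrt2)$ from \eqref{eq:thetal}, proving \eqref{eq:Zjdevb}; for $x\sqrt2\le k$ the optimizer is $e^{-t}=(2k-2x\sqrt2)/(2k-x\sqrt2)$ and the closed form \eqref{eq:thetalbis} coincides with $K_k(-x\sqrt2)$.

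It then remains to derive \eqref{eq:Zjdeva} and \eqref{eq:Zjdevbb} by feeding these closed forms into \eqref{eq:ThetaF}: Lemma~\ref{lemme:ThetaF} gives $K_k(\pm x\sqrt2)\le C\exp\big(-ck\min(x\sqrt2/k,\,2x^2/k^2)\big)$, and since $k\ge1$ one has $k\min(x\sqrt2/k,\,2x^2/k^2)=\min(x\sqrt2,\,2x^2/k)\ge\min(x/k,(x/k)^2)$, so after absorbing $\sqrt2$ into the constant this is the asserted bound $C\exp\big(-c\min(x/k,(x/k)^2)\big)$. For the lower tail one uses $x\le k$ to keep the argument $-x\sqrt2$ within the range $[-k,+\infty)$ required by Lemma~\ref{lemme:ThetaF}, the estimate being vacuously true on $k/\sqrt2<x\le k$ where the probability vanishes.

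The only genuinely delicate step is the bookkeeping identifying the optimized Chernoff exponents with the previously introduced $\Theta_k^r,\Theta_k^l$ and with $K_k(\pm x\sqrt2)$---in particular carrying the factor $\sqrt2$ consistently and respecting the constraint $t<\ln2$ in the upper tail. Everything else is a routine application of results already in hand (Proposition~\ref{lemind} for the distribution of the $N_j$ and Lemma~\ref{lemme:ThetaF} for the explicit bound), and no new probabilistic estimate is needed.
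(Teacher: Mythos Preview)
Your proof is correct and follows essentially the same route as the paper: both arguments apply the Chernoff--Markov bound using the explicit probability generating function $\ee[x^{N_1}]=1/(2-x)$ from \eqref{eq:PGFN}, identify the optimized exponents with the quantities $\Theta_k^r$, $\Theta_k^l$ already introduced in \eqref{eq:thetar0}--\eqref{eq:thetalbis}, and then invoke Lemma~\ref{lemme:ThetaF} to extract the explicit exponential bounds. Your treatment is in fact slightly more careful than the paper's in two places: you make the constraint $t<\ln2$ for finiteness of the MGF explicit, and you handle the lower-tail range $k/\sqrt2<x\le k$ (where the optimizer in \eqref{eq:thetalbis} breaks down) by observing that $\sum_{j=1}^kZ_j\ge-k/\sqrt2$ deterministically, so the probability vanishes there.
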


\begin{proof}
Recall from \eqref{eq:Zj} that $Z_j=\big(N_m^{T_i^{j-1}, T_i^j}-1\big)/\sqrt 2$, $i\neq m$, 
and from \eqref{eq:PGFN},  
\begin{equation}
\label{eq:FCNApp}
\ee\Big[x^{N_m^{T_i^{j-1}, T_i^j}}\Big]=\frac 1{2-x}. 
\end{equation}
Hence, using the notation in \eqref{eq:thetar0},
\begin{eqnarray*}
\nonumber
\pp\Big(\sum_{j=1}^k Z_j\geq x\Big)
\leq \min_{t>0} \left(e^{-t(x\sqrt 2+k)}\  
\ee\Big[\exp\big(tN_m^{T_i^{j-1}, T_i^j}\big)\Big]^k\right)=\Theta_k^r(x\sqrt 2), 
\end{eqnarray*}
and \eqref{eq:Zjdeva} follows from \eqref{eq:thetar0} and \eqref{eq:thetar} 
in (the proof of) Proposition~\ref{prop:maxC1} (with its notation)  
and from \eqref{eq:ThetaF} in Lemma~\ref{lemme:ThetaF}. 
Similarly, using the notation in \eqref{eq:thetal}
\begin{eqnarray}
\nonumber
\pp\Big(\sum_{j=1}^k Z_j\leq -x\Big)&=&\pp\left(\sum_{j=1}^k 
\Big(1-N_m^{T_i^{j-1}, T_i^j}\Big)\geq x\sqrt 2\right)\\
\nonumber
&\leq& \min_{t>0} \left(e^{-t(x\sqrt 2-k)}\  
\ee\Big[\exp\big(-tN_m^{T_i^{j-1}, T_i^j}\big)\Big]^k\right)=\Theta_k^l(x\sqrt 2).
\end{eqnarray}
which is \eqref{eq:Zjdevb}. 
As previously observed via \eqref{eq:thetalbis}, when $x\leq k$, the minimization for $\Theta_k^l(x)$ occurs at $t=\ln\big((2k-x)/(2k-2x)\big)$, 
and, once again, \eqref{eq:ThetaF} in Lemma~\ref{lemme:ThetaF} ensures \eqref{eq:Zjdevbb}.
\end{proof}

We are now ready to move towards completing this section.  
From its very definition in~\eqref{eq:Cni**},  
\begin{eqnarray*}
&&{\Ccal}^{\#}_{n,i} = 
\Ccal_n^{i-1} \cap \left\{\bbe[N_i^*]\le \ell_i=k_1+\cdots +k_i
\le \bbe[N_i] +\sqrt n\ln n -(N_i^* - \bbe[N_i^*]) \right\} \\ 
&&\qquad \subset 
\left\{k_1+\cdots+k_{i-1}\le \sum_{j=1}^{i-1}\bbe[N_j] + (i-1)\sqrt n\ln n \right\} \\
&&\qquad \qquad \cap 
\left\{\bbe[N_i^*]\le \ell_i=k_1+\cdots +k_i
\le \bbe[N_i] + \sqrt n\ln n -(N_i^* - \bbe[N_i^*]) \right\} \\
&&\qquad \subset 
\left\{k_1+\cdots+k_{i-1}\le (i-1)(n\max_{j=1, \dots, i-1}p_j + \sqrt n\ln n) \right\}\\
&&\qquad \qquad \cap 
\left\{\bbe[N_i^*]\le \ell_i=k_1+\cdots +k_i
\le \bbe[N_i] + \sqrt n\ln n -(N_i^* - \bbe[N_i^*]) \right\}.
\end{eqnarray*}  
Therefore, recalling also from \eqref{eq:EVar_Ni} 
that $\bbe[N_i^*] = k_1+\cdots+k_{i-1}$, \eqref{eq:ppZ} 
is upper bounded by:   
\begin{eqnarray}
&&\pp\left(\left\{\max_{\begin{subarray}{l}k_1+\cdots+k_{i-1}\leq (i-1)(n/m+\sqrt n\ln n)
\\k_1+\cdots+k_{i-1}
\leq \ell_i
\leq \bbe[N_i]+\sqrt n\ln n-(N_i^*-(k_1+\cdots+k_{i-1}))\end{subarray}} 
\left|\sum_{j=\ell_i+1}^{\ell_i+N_i^*-(k_1+\cdots+k_{i-1})} Z_j\right|\ge 
\varepsilon\sqrt n\right\}\cap E_n^i\!\right) \nonumber \\
\nonumber
&&\hskip 1.7cm \le \pp\left(\max_{\begin{subarray}{l}k_1+\cdots+k_{i-1}
\le (i-1)(n/m+\sqrt n\ln n)
\\k_1+\cdots+k_{i-1}
\leq \ell_i
\leq \bbe[N_i]+\sqrt n\ln n + x_n\end{subarray}}
\max_{|n_i|\leq x_n} \left|\sum_{j=\ell_i+1}^{\ell_i+n_i} Z_j\right|
\ge\varepsilon\sqrt n\right)
\quad \mbox{(recall \eqref{eq:Eni})}\\
\nonumber
&&\hskip 1.7cm \le \pp\left(\max_{\ell_i\leq \bbe[N_i]
+\sqrt n\ln n+x_n}\max_{|n_i|\leq x_n} 
\left|\sum_{j=\ell_i+1}^{\ell_i+n_i} Z_j\right|\ge \varepsilon\sqrt n\right)
\\
&&\hskip 1.7cm \le 3nx_n\max_{\begin{subarray}{c}\ell_i\leq \bbe[N_i]
+\sqrt n\ln n+x_n\\|n_i|\leq x_n\end{subarray}} 
\pp\left(\left|\sum_{j=\ell_i+1}^{\ell_i+n_i} Z_j\right|
\ge\varepsilon\sqrt n\right)\nonumber \\
&&\hskip 1.7cm \le 3nx_n\max_{\begin{subarray}{c}\ell_i
\le \bbe[N_i]+\sqrt n\ln n
+x_n\\0\leq n_i
\leq x_n\end{subarray}} \left(\Theta_{n_i}^l(\varepsilon\sqrt{2n})
+\Theta_{n_i}^r(\varepsilon\sqrt{2n})\right),\label{eq:techmax}
\end{eqnarray}
where, in the next to last inequality, we used the usual (sharp in the iid case) bounding 
of the maximum via the number of terms times the maximal probability; 
while in the last one, $|n_i|\leq x_n$ was changed into 
$0\leq n_i\leq x_n = \sqrt n \ln n$.

Our final task is to show that 
\begin{equation}
\label{eq:thetabullet2}
\lim_{n\to+\infty} nx_n\max_{0\leq n_i\leq x_n} 
\Theta_{n_i}^\bullet(\varepsilon\sqrt{2n})=0, 
\end{equation}
for $\bullet\in\{l,r\}$. 
This relies again on Lemma~\ref{lemme:ThetaF} and Proposition~\ref{prop:Zjdev}.
For $\bullet=r$,  when $k<\varepsilon \sqrt{2n}$, \eqref{eq:Zjdevaa} 
and \eqref{eq:Zjdeva} entail that,    
\begin{equation}\label{eq:thetar1}
\Theta_k^r(\varepsilon \sqrt{2n})\leq C\exp(-c\varepsilon \sqrt{2n}); 
\end{equation}
while, for $\varepsilon\sqrt{2n}\leq k\leq x_n$, they entail that,  
\begin{equation}\label{eq:thetar2}
\Theta_k^r(\varepsilon\sqrt{2 n})
\leq C\exp\big(-2c\varepsilon^2n/k\big)
\leq C\exp\big(-2c\varepsilon^2n/x_n\big)
=C\exp\big(-2c\varepsilon^2\sqrt n /\ln n\big).
\end{equation}
Therefore, for $\bullet=r$, \eqref{eq:thetabullet2} follows from \eqref{eq:thetar1} and 
\eqref{eq:thetar2}. 
Let us now turn our attention to $\bullet=l$.  
When $\varepsilon\sqrt{2n}\leq k\leq x_n$, \eqref{eq:Zjdevbb} entails that, 
\begin{equation}
\label{eq:thetal1}
\Theta_k^l(\varepsilon\sqrt{2n})
\leq C\exp\big(-2c\varepsilon^2n/k\big)\leq C\exp\big(-2c\varepsilon^2n/x_n\big)
= C\exp\big(-2c\varepsilon^2 \sqrt n/\ln n\big).
\end{equation}
For $k\leq \varepsilon\sqrt{n/2}$, \eqref{eq:Zjdevb} entails that, for any $t>0$,  
\begin{equation}
\label{eq:thetal2Est}
\Theta_k^l(\varepsilon\sqrt{2n})\leq 
\exp\Big(t(k-\varepsilon\sqrt{2n})-k\ln (2-e^{-t})\Big)
\leq \exp\Big(-\varepsilon t\sqrt{n/2}\Big).
\end{equation}  
For $\varepsilon\sqrt{n/2}\leq k\leq \varepsilon\sqrt{2n}$, \eqref{eq:Zjdevb} 
entails that, for any $t>0$, 
\begin{equation}
\label{eq:thetal3}
\Theta_k^l(\varepsilon\sqrt{2 n})\leq\exp\Big(t(k-\varepsilon\sqrt{n/2})-k\ln (2-e^{-t})\Big)
\leq \exp\Big(-\varepsilon t\sqrt{n/2} \ln(2-e^{-t})\Big).
\end{equation}
Therefore, for $\bullet=l$, \eqref{eq:thetabullet2} follows from \eqref{eq:thetal1}, 
\eqref{eq:thetal2Est}, and \eqref{eq:thetal3}. 
Gathering all the intermediate results, for any $i=2, \dots, m-1$,  
$$
\lim_{n\to+\infty}
\pp\left(\left\{\max_{\kk\in\Ccal^{\#}_{n,i}} 
\left|\sum_{j=\ell_i + 1}^{\ell_i+N_i^*-(k_1+\cdots+k_{i-1})} Z_j^{(i)}\right|
\ge\varepsilon\sqrt n\right\}\cap E_n^i \right)=0, 
$$
and therefore,  
$$
\lim_{n\to+\infty}\pp\left(\max_{\kk\in\Ccal_n^*} 
\left|\sum_{j=k_1+\cdots+k_i+1}^{N_i^*+k_i}  
\frac{Z_j^{(i)}}{\sqrt n}\right|\ge\varepsilon\right)=0.
$$
The goal of this section has thus been achieved:
the quantities \eqref{eq:max11} and \eqref{eq:max12} have the same weak limit.


\subsection{The Constraints}
\label{sec:constraints}

To deal with the third heuristic limit, we now need to obtain the convergence 
of the random set of constraints towards a deterministic set of constraints.  
This fact will follow from the various reductions obtained to date as well as new 
arguments developed from now on. To start with, let us recall two elementary facts 
about convergence in distribution.

The first fact asserts that if $(f_n)_{1\leq n\leq \infty}$ is a sequence of 
Borel functions such that $x_n\to x_\infty$ implies that 
$f_n(x_n)\to f_\infty(x_\infty)$, and  
if $(X_n)_{n\ge 1}$ is a sequence of random variables such that 
$X_n\Rightarrow X_\infty$, then $f_n(X_n)\Rightarrow f_\infty(X_\infty)$. 
Indeed, via the Skorohod representation theorem for $C_0([0,1])$-valued random variables, there exist a probability space and $C_0([0,1])$-valued random variables $Y_n$, $1\le n\le \infty$, such 
that $Y_n\stackrel{\call}{=} X_n$, $1\le n\le \infty$, and $Y_n\to Y_\infty$ 
with probability one.  But, by hypothesis, $f_n(Y_n)\to f_\infty (Y_\infty)$, 
with probability one. Therefore $f_n(X_n)\Rightarrow f_\infty (X_\infty)$.

The second elementary fact is as follows: Let $(X_n)_{n\ge 1}$ be a 
sequence of random variables such that $X^\pm_n\Rightarrow Y$, 
then $X_n\Rightarrow Y$, where $x^+=\max(x,0)$ and $x^-=\min(x,0)$.  
Indeed, $\bbp (X^+_n\le x)\le \bbp(X_n\le x)\le\bbp (X^-_n\le x)$, 
for all $x\in\bbR$.

Using these two elementary facts, let us return to our derandomization problem. 
Recalling \eqref{eq:max12}, and using the polygonal structure of 
the processes $B_n^X$ and $B_n^Y$, we have 
\begin{eqnarray}
\nonumber
M_n&:=&\max_{\kk\in\Ccal_n}\left(F_X\left(B^X_n,\frac{\kk}{n}\right)\wedge
F_Y\left(B^Y_n,\frac{\kk}{n}\right)\right),
\end{eqnarray}
where
\begin{eqnarray}
\label{eq:funcFX}
F_X\big({\bf u},{\bf t}\big)&=&
\frac 1m\sum_{i=1}^{m-1} u_i(p_i(X))-\sum_{i=1}^{m-1} 
\left(u_i\Big(\sum_{j=1}^{i}t_j\Big)-u_i\Big(\sum_{j=1}^{i-1}t_j\Big)\right),\\
\label{eq:funcFY}
F_Y\big({\bf u},{\bf t}\big)&=&
\frac 1m\sum_{i=1}^{m-1} u_i(p_i(Y))-\sum_{i=1}^{m-1} 
\left(u_i\Big(\sum_{j=1}^{i}t_j\Big)-u_i\Big(\sum_{j=1}^{i-1}t_j\Big)\right),
\end{eqnarray}
for ${\bf u}=(u_1, \dots, u_{m-1})\in \big(C_0([0,1])\big)^{m-1}$ and ${\bf t}=(t_1, \dots, t_{m-1})\in[0,1]^{m-1}$.
Now, let
\begin{equation}
\label{eq:Cpm}
\Ccal^{\pm}_n=\left\{\kk=(k_i)_{1\leq i\leq m-1}:\forall\ i=1,\dots, m-1, 0\leq k_i\leq n \mbox{ and }
\sum^i_{j=1} \frac{k_j}n\leq p_i\pm 2x_n\right\},
\end{equation}
with $x_n=\sqrt n\ln(n)$
as in \eqref{eq:xn},
and let
\begin{equation}
\label{eq4b}
M^\pm_n=\max_{\kk \in\Ccal^{\pm}_n}\left(F_X\left(B^X_n,\frac{\kk}n\right)
\wedge F_Y\left(B^Y_n,\frac{\kk}{n}\right)\right).
\end{equation}
Since
\begin{eqnarray*}
N_i(X)-N^*_i(X)&=&np_i-\sum_{j=1}^{i-1}k_i+\Big(\big(N_i(X)-\ee[N_i(X)]\big)-\big(N_i^*(X)-\ee[N_i^*(X)]\big)\Big),\\
\end{eqnarray*}
with a similar statement replacing $X$ by $Y$, the condition
$$
k_i\leq \big(N_i(X)-N^*_i(X)\big)\wedge \big(N_i(Y)-N^*_i(Y)\big),
$$
in the definition \eqref{eq8C}--\eqref{eq8Ci} of $\Ccal_n$, writes as $\sum_{j=1}^ik_i/n\leq p_i+R_n^i(X,Y)/n$ where
\begin{eqnarray}
\nonumber
R_n^i(X,Y)&=&\Big(\big(N_i(X)-\ee[N_i(X)]\big)-\big(N_i^*(X)-\ee[N_i^*(X)]\big)\Big)\\
\label{eq:Rni}
&&\hspace{2cm}\wedge\Big(\big(N_i(Y)-\ee[N_i(Y)]\big)-\big(N_i^*(Y)-\ee[N_i^*(Y)]\big)\Big).
\end{eqnarray}
Now let
$$
F_n:=\bigcap_{i=1}^{m-1}\big\{|N_i-\ee[N_i]|\leq x_n\big\}\cap E_n^i,
$$
with $E_n^i$ defined in \eqref{eq:Eni}.
From \eqref{hoe} and Proposition~\ref{prop:maxC1}, we have $\lim_{n\to+\infty}\pp\big(F_n^c\big)=0$ 
and, on $F_n$,  $R_n^i(X,Y)\leq 2x_n$, for all $1\leq i\leq m$.
Therefore, when $F_n$ is realized, $\Ccal_n$ in \eqref{eq8C} is encapsulated as follows: $\Ccal_n^{-}\subset\Ccal_n\subset\Ccal^{+}_n$ , and
\begin{equation}
\label{eq6b}
M^-_n\leq M_n\leq M^+_n.
\end{equation}
Clearly, 
\begin{equation}
\label{eq4bb}
M^\pm_n=\max_{{\bf t}\in\Ccal^{\pm}_n}\left(F_X\left(B^X_n,{\bf t}\right)
\wedge F_Y\left(B^Y_n,{\bf t}\right)\right),
\end{equation}
where now 
\begin{equation}
\label{eq3bb}
\Ccal^{\pm}_n=\left\{{\bf t}=(t_i)_{1\leq i\leq m-1}\in {}[0,1]^{m-1} : 
\forall\ i=1,\dots, m-1, \sum^i_{j=1} t_j\leq p_i\pm 2\frac{x_n}n\right\}.
\end{equation}
Next,
\begin{align*}
\pp\big(M_n\leq x\big)
&\leq \pp\Big(\{M_n\leq x\}\cap F_n\Big)
+\pp\big(F_n^c\big)\\
&\leq \pp\big(\{M^-_n\leq x\}\cap F_n\big)
+\pp\big(F_n^c\big)\\
&\leq \pp\big(M^-_n\leq x\big)+\pp\big(F_n^c\big),
\end{align*}
therefore 
\begin{equation}
\label{eq7b}
\limsup_{n\to+\infty} \pp(M_n\leq x)\leq\limsup_{n\to +\infty}\pp(M^-_n\leq x). 
\end{equation}
Similarly, 
\begin{align*}
\pp\big(M_n\leq x\big)
&= \pp\Big(\{M_n\leq x\}\cap F_n\Big)+\pp\Big(\{M_n\le x\}\cap F_n^c\Big)\\
&\geq \pp\Big(\{M^+_n\leq x\}\cap F_n \Big)\\
&\geq \pp\big(M^+_n\leq x\big)-\pp\big(F_n^c\big),
\end{align*}
and therefore
\begin{equation}
\label{eq8b}
\liminf_{n\to +\infty} \pp(M_n\leq x)\geq\liminf_{n\to+\infty}\pp\big(M^+_n\leq x\big).
\end{equation}
Combining \eqref{eq7b} and \eqref{eq8b} with the second elementary fact described 
above, our goal is now to show that the convergence in distribution of both $M^+_n$ and $M^-_n$ towards
\begin{equation}
\label{eq9b}
M_\infty=\max_{{\bf t}\in \calv}\big(F_X(B^X,{\bf t})\wedge F_Y(B^Y,{\bf t})\big),
\end{equation}
holds true, where
$$
\calv:=\calv(p_1, \dots, p_{m-1})=\left\{{\bf t}=(t_j)_{1\leq j\leq m-1} 
\in[0,1]^{m-1} : \forall i=1, 
\dots, m-1, \sum_{j=1}^i t_j\leq p_i\right\}.
$$
To do so, first note that by Donsker's theorem $(B^X_n, B^Y_n)\Rightarrow (B^X,B^Y)$ 
and we now wish to apply the first elementary fact, recalled above, to the functions
\begin{eqnarray}
\label{eq:fnpm}
f^\pm_n({\bf u},{\bf v})
=\max_{{\bf t}\in\Ccal^\pm_n}\big(F_X\left({\bf u},{\bf t}\right)
\wedge F_Y\left({\bf v},{\bf t}\right)\big), 
\end{eqnarray}
and 
\begin{eqnarray}
\label{eq:fpm}
f_\infty({\bf u},{\bf v})=\max_{{\bf t}\in\calv}
\big(F_X({\bf u},{\bf t})\wedge F_Y({\bf v},{\bf t})\big).
\end{eqnarray}
With these notations, $M_n^\pm=f_n^\pm(B_n^X, B_n^Y)$ and $M_\infty=f_\infty(B^X, B^Y)$. 
In other words, we wish to show that 
$({\bf u}_n,{\bf v}_n)\to ({\bf u},{\bf v})$ in $(C_0([0,1]))^{m-1}$ implies that $f_n({\bf u}_n,{\bf v}_n)\to f_\infty({\bf u},{\bf v})$.
To start with, 
\begin{equation}
\label{eq10b}
|f^\pm_n({\bf u}_n,{\bf v}_n)-f_\infty({\bf u},{\bf v})|
\leq|f^\pm_n({\bf u}_n,{\bf v}_n)-f^\pm_n({\bf u},{\bf v})|
+|f^\pm_n({\bf u},{\bf v})-f_\infty({\bf u},{\bf v})|, 
\end{equation}
and we continue by 
estimating $|f^\pm_n({\bf u}_n,{\bf v}_n)-f^\pm_n({\bf u},{\bf v})|$. 
But,   
\begin{align}
\nonumber
&|f^\pm_n({\bf u}_n,{\bf v}_n)-f^\pm_n({\bf u},{\bf v})|\\
&\qquad \leq \max_{{\bf t}\in\Ccal^\pm_n}
\Bigl|\Big(F_X\left({\bf u}_n,{\bf t}\right)\wedge
F_Y\left({\bf v}_n,{\bf t}\right)\Big) -
\Big(F_X\left({\bf u},{\bf t}\right) \wedge F_Y\left({\bf v},{\bf t}\right)
\Big)\Bigr| \nonumber\\
\label{eq:techineg2}
&\qquad\leq\max_{{\bf t}\in\Ccal^\pm_n}\max\Big(\left|
F_X\left({\bf u}_n,{\bf t}\right) -F_X\left({\bf u},{\bf t}\right)\right|,
\left|F_Y\left({\bf v}_n,{\bf t}\right)-F_Y\left({\bf v},{\bf t}\right)
\right|\Big)
\\
\label{eq:tech9887}
&\qquad \leq c\max_{{\bf t}\in\Ccal^\pm_n}\max
\Big(\left| {\bf u}_n({\bf t})-{\bf u}({\bf t})\right|,
\left| {\bf v}_n({\bf t})-{\bf v}({\bf t})\right|\Big), 
\end{align}
making use of Lemma~\ref{lemme:ineg} in \eqref{eq:techineg2}, and 
by the linearity of both $F_X$ and $F_Y$, with respect to their first 
argument, in \eqref{eq:tech9887} and where, further, $c$ is a finite positive constant (depending explicitly on $m$). 
Therefore, 
$$
|f^\pm_n({\bf u}_n,{\bf v}_n)-f^\pm_n({\bf u},{\bf v})|
\leq c\max\big(\| {\bf u}_n-{\bf u}\|_\infty, \|{\bf v}_n-{\bf v}\|_\infty\big), 
$$
and so if $({\bf u}_n,{\bf v}_n)\to ({\bf u},{\bf v})$, it follows 
that $f^\pm_n({\bf u}_n,{\bf v}_n)-f^\pm_n({\bf u},{\bf v})\to 0$.

\medskip\noindent
In order to complete the proof of $M^\pm_n\Rightarrow M_\infty$ and thus that 
of $M_n\Rightarrow M_\infty$, let us now estimate the right-most expression 
in \eqref{eq10b}.  

At first, note that $\Ccal^-_n\subset \calv\subset \Ccal^+_n$, hence 
\begin{equation}
\label{eq12}
f^-_n({\bf u},{\bf v})\leq f_\infty({\bf u},{\bf v})\leq f^+_n({\bf u},{\bf v}).
\end{equation}
Next, via \eqref{eq:fnpm} and \eqref{eq:fpm}, set  
$f^+_n({\bf u},{\bf v})=\max_{{\bf t}\in\Ccal^+_n}\theta_{{\bf u},{\bf v}}({\bf t})$, 
and 
$f_\infty({\bf u},{\bf v})=\max_{{\bf t}\in\calv}\theta_{{\bf u},{\bf v}}({\bf t})$, 
where $\theta_{{\bf u},{\bf v}}({\bf t})=F_X({\bf u},{\bf t})\wedge F_Y({\bf v},{\bf t})$.
Since $\Ccal^-_n\subset \Ccal^-_{n+1}$, for $n\geq 1$, it follows (as shown next) 
that $f^-_n({\bf u},{\bf v})\to \max_{{\bf t}\in \bigcup_{n\geq 1}\Ccal^-_n} 
\theta_{{\bf u},{\bf v}}({\bf t})$. 
Indeed, $\lim_{n\to+\infty}f^-_n({\bf u},{\bf v})
\le \max_{{\bf t}\in \bigcup_{n\geq 1}\Ccal^-_n} \theta_{{\bf u},{\bf v}}({\bf t})$ 
and if the previous inequality 
were strict, there would now be $K\in(0,+\infty)$ such that 
$$
\max_{{\bf t}\in\Ccal^-_n} \theta_{{\bf u},{\bf v}}({\bf t})\leq K 
<\max_{{\bf t}\in \bigcup_{n\geq 1}\Ccal^-_n} \theta_{{\bf u},{\bf v}}({\bf t}).
$$
The left-hand side inequality implies that for all $n\geq 1$, 
and ${\bf t}\in\Ccal^-_n$, $\theta_{{\bf u},{\bf v}}({\bf t})\leq K$, 
contradicting the right-hand side inequality. 

\noindent
Since $\Ccal^+_n\supset \Ccal^+_{n+1}$, for $n\geq 1$, it also follows that  
$f^+_n({\bf u},{\bf v})\to \max_{{\bf t}\in \bigcap_{n\geq 1}\Ccal^+_n} 
\theta_{{\bf u},{\bf v}}({\bf t})$. 
Indeed, we have $\lim_{n\to+\infty}f^+_n({\bf u},{\bf v})
\geq \max_{{\bf t}\in \bigcap_{n\geq 1}\Ccal^-_n} \theta_{{\bf u},{\bf v}}({\bf t})$ 
and if the previous inequality were strict, there would be $K\in(0,+\infty)$ such that 
$$
\max_{{\bf t}\in\Ccal^+_n} \theta_{{\bf u},{\bf v}}({\bf t})
\ge K >\max_{{\bf t}\in \bigcap_{n\geq 1}\Ccal^+_n} \theta_{{\bf u},{\bf v}}({\bf t}).  
$$
The left-hand side inequality implies that for any $n\geq 1$, 
there exists ${\bf t}_n\in\Ccal^+_n$ 
with $\theta_{{\bf u},{\bf v}}({\bf t}_n)\ge K$.  
Up to a subsequence ${\bf t}_n\to {\bf t}^\ast \in \bigcap_{n\geq 1} \Ccal^+_n$ 
and by the continuity of $\theta_{{\bf u},{\bf v}}$,  
$\theta_{{\bf u},{\bf v}}({\bf t}^\ast)\geq K$, 
which is inconsistent with the previous right-hand side inequality. 

\noindent 
Finally, since $\bigcup_{n\geq 1}\Ccal^-_n={\cal V}^\circ$, the interior of $\cal V$, and since $\bigcap_{n\geq 1}\Ccal^+_n=\overline{\cal V}={\cal V}$, the closure of $\cal V$, we have 
\begin{equation}
\label{eq:f+f-f}
\lim_{n\to+\infty}f^-_n({\bf u},{\bf v})
=\max_{{\bf t}\in{\cal V}^\circ} \theta_{{\bf u},{\bf v}}({\bf t})
\leq f_\infty({\bf u},{\bf v})
=\max_{{\bf t}\in{\cal V}} \theta_{{\bf u},{\bf v}}({\bf t})=
\lim_{n\to+\infty}f^+_n({\bf u},{\bf v}).  
\end{equation}
It remains to show that the maximum of $ \theta_{{\bf u},{\bf v}}$ on ${\cal V}$ 
is attained on ${\cal V}^\circ$ for $\pp_{(B^X, B^Y)}$-almost 
all $({\bf u},{\bf v})$, i.e., that 
\begin{equation}
\label{eq:Wcirc}
\pp\left(\max_{{\bf t}\in{\cal V}(1/m, \dots, 1/m)^\circ} \theta_{B^X,B^Y}({\bf t})
=\max_{{\bf t}\in{\cal V}(1/m, \dots, 1/m)} \theta_{B^X,B^Y}({\bf t})\right)
=1. 
\end{equation}
With \eqref{eq:Wcirc}, \eqref{eq:f+f-f} entails 
$\lim_{n\to+\infty}f^\pm_n({\bf u},{\bf v})=f_\infty({\bf u},{\bf v})$
for $\pp_{(B^X, B^Y)}$-almost all $({\bf u},{\bf v})$, i.e.,  
the right-most expression in \eqref{eq10b} converges to $0$ and, 
as previously explained, this gives 
$M_n^\pm\Rightarrow M_\infty$ and $M_n\Rightarrow M_\infty$.  

\noindent
In order to complete \eqref{eq:Wcirc} we anticipate, in the second equality below, on the results of Section~\ref{sec:LinearTrans} in which parameters are changed via: $s_1=u_1, s_1+s_2 = u_2, \dots, s_1+\cdots+s_{m-1}=u_{m-1}$ and where we prove that
$$
\big(\theta_{B^X,B^Y}({\bf t})\big)_{{\bf t}\in {\cal V}(1/m, \dots, 1/m)}
\stackrel{{\cal L}}{=}
\frac 1{\sqrt m}\big(\theta_{B^X,B^Y}({\bf s})\big)_{{\bf s}\in {\cal V}(1, \dots, 1)}\\
{=}
\frac 1{\sqrt{2m}} \big(\widetilde\theta_{B_1,B_2}({\bf u})\big)_{{\bf u}\in {\cal W}_m(1)}, 
$$ 
where ${\cal W}_m(1) = \{0=u_0\le u_1\le \cdots \le u_{m-1}\le u_m=1\}$, 
\begin{eqnarray}
\label{eq:tildetheta}
\widetilde\theta_{B_1,B_2}({\bf u})&=&
\left(-\frac 1m\ 
\sum_{i=1}^{m}B^{(i)}_1(1)+\sum_{i=1}^{m} 
\left(\!B^{(i)}_1(u_i)-B^{(i)}_1(u_{i-1})\right)\right)\\
\nonumber&
&\hspace{2cm} \wedge
\left(-\frac 1m \sum_{i=1}^{m} B^{(i)}_2(1)+\sum_{i=1}^{m} \left(B^{(i)}_2(u_i) 
- B^{(i)}_2(u_{i-1})\right)\right), 
\end{eqnarray}
and with $B_1$ and $B_2$ two independent, standard, $m$-dimensional Brownian on $[0,1]$. 
The property \eqref{eq:Wcirc} is thus equivalent to 
\begin{equation}
\label{eq:Wcirc2}
\pp\left(\max_{{\bf u}\in{\cal W}_m(1)^\circ} \widetilde\theta_{B_1,B_2}({\bf u})
=\max_{{\bf u}\in{\cal W}_m(1)} \widetilde\theta_{B_1,B_2}({\bf u})\right)
=1. 
\end{equation}
The advantage of \eqref{eq:Wcirc2} over \eqref{eq:Wcirc} is that the former  
involves two standard Brownian motions each one having {\it independent} coordinates.  
Roughly speaking, the property \eqref{eq:Wcirc2} should be derived from 
the following observation: when ${\bf u}\in\partial{\cal W}_m(1)$, 
then $u_k=u_{k+1}$, for some index $k$, and for such a ${\bf u}$, the sum 
$\sum_{i=1}^{m} \big(B^{(i)}_1(u_i)-B^{(i)}_1(u_{i-1})\big)$ 
contains only $m-1$ terms. 
Letting ${\bf u}_\varepsilon$ be given by 
$$
u_{\varepsilon, i}=u_i, \quad i\not=k+1, 
\quad \mbox{ and } \quad u_{\varepsilon, k+1}=u_k+\varepsilon, 
$$
we have  
\begin{align*}
&\sum_{i=1}^{m} \big(B^{(i)}_1(u_{\varepsilon,i})-B^{(i)}_1(u_{\varepsilon,i-1})\big) \\
&\quad = \sum_{i=1}^{m} \big(B^{(i)}_1(u_i)-B^{(i)}_1(u_{i-1})\big)
+\big(B^{(k+1)}_1(u_k+\varepsilon)-B^{(k+1)}_1(u_k)\big)\\
&\qquad\qquad +\big(B^{(k+2)}_1(u_k)-B^{(k+2)}_1(u_k+\varepsilon)\big).
\end{align*}
The terms $\big(B^{(k+1)}_1(u_k+\varepsilon)-B^{(k+1)}_1(u_k)\big)$ 
and $\big(B^{(k+2)}_1(u_k)-B^{(k+2)}_1(u_k+\varepsilon)\big)$ are independent 
of $\sum_{i=1}^{m} \big(B^{(i)}_1(u_i)-B^{(i)}_1(u_{i-1})\big)$ and from 
standard properties of Brownian motion, almost surely, the sum 
$\big(B^{(k+1)}_1(u_k+\varepsilon)-B^{(k+1)}_1(u_k)\big)
+ \big(B^{(k+2)}_1(u_k)-B^{(k+2)}_1(u_k+\varepsilon)\big)$ takes positive value 
for arbitrarily small $\varepsilon>0$.  
Since the same is true for the second term in \eqref{eq:tildetheta} relative 
to $B_2$, it follows that in the vicinity of each ${\bf u}\in\partial{\cal W}_m(1)$, 
there is ${\bf u}_\varepsilon\in {\cal W}_m(1)$ 
with $\widetilde\theta_{B_1,B_2}({\bf u}_\varepsilon)>\widetilde\theta_{B_1,B_2}({\bf u})$.
Therefore, $\max_{{\bf u}\in{\cal W}_m(1)} \widetilde\theta_{B_1,B_2}({\bf u})$ 
is attained in ${\cal W}_m(1)^\circ$,  
and so both \eqref{eq:Wcirc2} and \eqref{eq:Wcirc} hold true, leading to 
$M_n\Rightarrow M_\infty$.


\subsection{Final Step: A Linear Transformation}
\label{sec:LinearTrans}

By combining the results of the previous three subsections, we proved that
\begin{eqnarray}
\!\!\frac{\lcin- n/m}{\sqrt{2n}} 
\Rightarrow \!\max_{{\cal V}(1/m, \dots, 1/m)}\!\!\!\!\min\!\left(\!\frac 1m\! 
\sum_{i=1}^{m-1}\!B^{(i),X}\!\!\left(\!\frac{1}{m}\right)\!-\sum_{i=1}^{m-1} 
\!\left(\!\!B^{(i),X}\!\!\left(\sum_{j=1}^{i}t_{j}\!\right)\!-B^{(i),X}
\!\!\left(\sum_{j=1}^{i-1}t_{j}\!\right)\!\right)\!,\right.\nonumber \\
\quad\quad\quad\!\!\left.\frac 1m \sum_{i=1}^{m-1}\!B^{(i),Y}\!\!\left(\frac{1}{m}\right)-
\sum_{i=1}^{m-1}\!\left(\!B^{(i),Y}\!\left(\sum_{j=1}^{i}t_{j}\!\right)
\!-B^{(i),Y}\!\left(\sum_{j=1}^{i-1}t_{j}\!\right)\!\right)\!\right),\label{presqder} 
\end{eqnarray}
where the maximum is taken over 
${\ttt}=(t_1, \dots, t_{m-1})\in {\cal V}(1/m, \dots, 1/m)$.
Now, via the linear transformations of the parameters given by $s_i=m\sum_{j=1}^i t_j$, $i=1, \dots, m-1$, $s_0=t_0=0$, and Brownian scaling, the right-hand side of \eqref{presqder} becomes equal, in law, to:   
\begin{eqnarray}
\frac{1}{\sqrt m}\max_{0=s_0\le s_1\le \cdots \le s_{m-1}\le 1}\min\left(\frac 1m\ 
\sum_{i=1}^{m-1}B^{(i),X}(1)-\sum_{i=1}^{m-1} 
\left(\!B^{(i),X}\!(s_i)-B^{(i),X}(s_{i-1})\right),\right.\nonumber \\
\quad\quad\quad \quad\quad\quad \left.\frac 1m \sum_{i=1}^{m-1} B^{(i),Y}(1)-
\sum_{i=1}^{m-1} \left(B^{(i),Y}(s_i) - B^{(i),Y}(s_{i-1})\right)\right).\label{presqder2} 
\end{eqnarray}
Next, for all $t\in [0,1]$ and $i= 1, \dots, m-1$, 
let us introduce the following two pointwise linear transformations:   
\begin{align*}
B^{(i),X}(t) = \frac{B_1^{(m)}(t) - B_1^{(i)}(t)}{\sqrt 2},  \\
B^{(i),Y}(t) = \frac{B_2^{(m)}(t) - B_2^{(i)}(t)}{\sqrt 2},   
\end{align*}
where $B_1$ and $B_2$ are two, standard, $m$-dimensional Brownian motion on $[0, 1]$. 
Clearly $(B^{(1),X}(t), \dots, B^{(m-1),X}(t))_{0\le t \le 1}$ has the correct covariance matrix \eqref{eq:Cov_mB}, and similarly for $B_2$, replacing $X$ by $Y$.  
Moreover, 
\begin{eqnarray}
&&\!\!\frac 1m 
\sum_{i=1}^{m-1}B^{(i),X}(1)-\sum_{i=1}^{m-1} 
\left(\!B^{(i),X}\!(s_i)-B^{(i),X}(s_{i-1})\right)\nonumber \\
&&= -\frac{1}{{\sqrt 2}m}\left(\sum_{i=1}^mB_1^{(i)}(1)\right) 
+ \frac{1}{\sqrt 2}B_1^{(m)}(1)\nonumber \\
&&\quad\quad -\frac{1}{\sqrt 2} \!\sum_{i=1}^{m-1}\!\left(\!B_1^{(m)}\!(s_i)-B_1^{(m)}(s_{i-1})\!\right) 
+ \frac{1}{\sqrt2}\!\sum_{i=1}^{m-1}\!\left(\!B_1^{(i)}\!(s_i)-B_1^{(i)}(s_{i-1})\!\right)  \nonumber\\ 
&&\!\!\!=\!\! \frac{1}{\sqrt 2}\!\left(\!\!-\frac{1}{m}\!\sum_{i=1}^m\!B_1^{(i)}(1) 
+ (B_1^{(m)}\!(1)- B_1^{(m)}\!(s_{m-1}))\! +  
\!\sum_{i=1}^{m-1}\!\!\left(\!B_1^{(i)}\!(s_i)-B_1^{(i)}(s_{i-1})\!\right)\!\!\right).
\label{presqder3} 
\end{eqnarray}
Finally, with the help of \eqref{presqder3} (and the corresponding identity for $Y$),  \eqref{presqder2} becomes:  
\begin{eqnarray}
\frac{1}{\sqrt{2m}}\max_{0=s_0\le s_1\le \cdots \le s_{m-1}\le s_m=1}\min\left(-\frac 1m\ 
\sum_{i=1}^{m}B^{(i)}_1(1)+\sum_{i=1}^{m} 
\left(\!B^{(i)}_1(s_i)-B^{(i)}_1(s_{i-1})\right),\right.\nonumber \\
\quad\quad\quad \quad\quad\quad \left.-\frac 1m \sum_{i=1}^{m} B^{(i)}_2(1)+
\sum_{i=1}^{m} \left(B^{(i)}_2(s_i) - B^{(i)}_2(s_{i-1})\right)\right),\label{der} 
\end{eqnarray}
and the proof of Theorem~\ref{thm1.1} is over.  


\section{Concluding Remarks}
\label{sec:concluding}

Let us discuss below some potential extensions to Theorem~\ref{thm1.1} and some 
questions we believe are of interest.

\medskip

$\bullet$  From the proof presented above, the passage from two to three or more 
sequences is clear: the minimum over two Brownian functionals becomes a minimum 
over three or more Brownian functionals, and such a passage applies 
to the cases touched upon below.    

\medskip

$\bullet$  It is also clear from the proof developed above, that a theorem for two sequences of iid (non-uniform) random variables is also valid.  
Here is what it should look like:  Let $X=(X_i)_{i\geq 1}$ and $Y=(Y_i)_{i\geq 1}$ be two sequences of 
iid random variables with values in  $\cala_m=\{\aa_1<\aa_2< \cdots <\aa_m\}$, a 
totally ordered finite alphabet of cardinality $m$ and with a common law, i.e., 
$X_1\stackrel{{\cal L}}{=}Y_1$.  
Let $\displaystyle p_{\max} = \max_{i=1, 2, \dots, m}\pp(X_1 = \aa_i)$ and let  
$k$ be the multiplicity of $p_{\max}$.  Then,  
\begin{eqnarray}\label{eq00max}
&&\!\!\!\!\!\!\!\!\!\!\!\!\!\!\frac{\lcin - {np_{\max}}}{\sqrt{np_{\max}}} 
\Longrightarrow\nonumber \\
&& \max_{0=t_0 \le t_1 \le \dots \le t_{k-1} \le t_k=1}
\!\min\!\left(\!\frac{\sqrt{1-kp_{\max}}-1}{k}\!\sum_{i=1}^k\!B^{(i)}_1\!(1)
+\sum_{i=1}^k(B^{(i)}_1\!(t_i)-B^{(i)}_1\!(t_{i-1})), \right.\nonumber\\ 
&&\hskip 4.3cm \left.\!\! \frac{\sqrt{1-kp_{\max}}-1}{k}\!\sum_{i=1}^k\!B^{(i)}_2\!(1)+ 
\sum_{i=1}^k(B^{(i)}_2\!(t_i)-B^{(i)}_2\!(t_{i-1}))\!\right)\!\!, 
\end{eqnarray}
where $B_1$ and $B_2$ are two $k$-dimensional standard 
Brownian motions defined on $[0,1]$. So, for instance, 
if $p_{\max}$ is uniquely attained then the limiting law in \eqref{eq00max} 
is the minimum of two centered Gaussian random variables.  
  
Using the sandwiching techniques developed in \cite{HL}, an infinite countable alphabet 
result can also be obtained with \eqref{eq00max}.   

\medskip

$\bullet$  The loss of independence inside the sequences, 
and the loss of identical distributions, both within and between the sequences 
is more challenging.  Results for these situations will be presented elsewhere.  

\medskip

$\bullet$ The length of the longest increasing subsequence of a random word is well known 
to have an equivalent interpretation in percolation theory: Indeed, consider the following 
directed last-passage percolation model in $\bbz^2_+$: let $\Pi_2(n,m)$ be the set of 
directed paths in $\bbz^2_+$ 
from $(0,0)$ to $(n,m)$ with unit steps going either North or East. 
Given random variables $\omega_{i,j}$, $i\geq 0, j\geq 1$, and 
interpreting each $\omega_{i,j}$ as the length of time spent by a path 
at the vertex $(i,j)$, the last-passage time to $(n,m)$ is given by
\begin{equation}
\label{eq:percolation-T}
T_2(n,m)=\max_{\pi\in \Pi_2(n,m)}\Bigg(\sum_{(i,j)\in\pi}\omega_{i,j}\Bigg).
\end{equation}
(See Bodineau and Martin \cite{BM}, and the references therein, for details.)  
In our random word context, when $X=(X_i)_{1\leq i\leq n}$ is a sequence of {\it iid} 
random variables taking their values in a totally ordered finite 
alphabet $\big\{\alpha_1<\alpha_2<\dots< \alpha_m\big\}$ of size $m$, 
taking $\omega_{i,j}=\ind_{\{X_i=\alpha_j\}}$ and $\omega_{0,j}=0$, $j\geq 1$, 
which for each $i$ are {\it dependent} random variables, the length of the longest 
increasing subsequence of the random word is equal to the last 
passage-time $T_2(n,m)$, see \cite{Breton_Houdre}.

Now $\lcin$, the length of the longest {\em common} and increasing subsequences, enjoys a similar percolation theory interpretation, but in $\bbz_+^3$.
Let $\Pi_3(n,n,m)$ be the set of paths in $\bbz^3_+$ from $(0,0,0)$ to $(n,n,m)$ taking either 
unit steps towards the top or steps, of any length, in the horizontal plane 
but neither parallel to the $x$-axis nor to the $y$-axis, i.e., 
\begin{eqnarray*}
\Pi_3(n,n,m)\!\!\!\!&:=&\!\!\!\!\Big\{(u_1, u_2, \dots, u_{n+m})\in \big(\bbz^3_+\big)^{n+m}: 
u_1=(0,0,1), u_{n+m}=(n,n,m),\\
&&
u_{j+1}-u_j\in\big\{\!(0,0,1), (a,b,0) \mbox{ with }a,b\in\bbn\setminus\{0\}\big\}, j=1, 
\cdots, n+m-1\Big\}.
\end{eqnarray*}
Given weights $\omega_{i,j,k}$, $i\geq 0, j\geq 0, k\geq 1$, on the lattice, we can 
consider a quantity analogous to $T_2(n,m)$ in \eqref{eq:percolation-T}, namely,
$$
T_3(n,n,m):=\max_{\pi\in \Pi_3(n,n,m)}\Bigg(\sum_{(i,j,k)\in\pi}\omega_{i,j,k}\Bigg).
$$
In the random word context, taking $\omega_{i,j,k}=\ind_{\{X_i=Y_j=\alpha_k\}}$  
and $\omega_{0,0,k}=0$, $k\geq 1$, as weights, gives $\lcin=T_3(n,n,m)$.
\\
Note that when $X=Y$, $T_3(n,n,m)$ recovers $T_2(n,m)$ since 
$T_2(n,m)$  is unchanged if, in \eqref{eq:percolation-T}, $\Pi_2(n,m)$ is replaced by 
\begin{eqnarray*}
\widetilde\Pi_2(n,m)\!\!&:=&\!\!\Big\{(u_1, u_2, \dots, u_{n+m})
\in \big(\bbz^2_+\big)^{n+m} :u_1=(0,1), u_{n+m}=(n,m),\\
&& \hspace{.7cm} 
u_{j+1}-u_j\in\big\{(0,1), (a,b)\mbox{ with }a,b\in\bbn\setminus\{0\}\big\}, j=1, 
\dots, n+m-1\Big\}.
\end{eqnarray*}

More generally, for $p\geq 3$ sequences of 
letters $X^{(\ell)}=(X_i^{(\ell)})_{1\leq i\leq n}$, $1\le \ell\leq p$, we can similarly consider
\begin{eqnarray*}
\Pi_{p+1}(n,\dots,m)\!\!\!&:=&\!\!\!\Big\{\!(u_1, u_2, \dots, u_{n+m})\in\!\big(\bbz^{p+1}_+\big)^{n+m} 
\!\!\!\!:
u_1=(0,\dots,0,1), u_{n+m}=(n,\dots,n,m)\\
&& \hspace{-2.3cm} 
u_{j+1}-u_j\in\big\{(0,, \dots, 0,1), (a_1,\dots, a_p,0) 
\mbox{ with }a_i\in\bbn\!\setminus\!\{0\}\big\}, j=1, \dots, n+m-1\Big\},
\end{eqnarray*}
and
$$
T_p(n,\dots,n,m):=\max_{\pi\in \Pi_{p+1}(n,\dots,n,m)}\Bigg(\sum_{(i_1,\dots, i_p,k)\in\pi}
\omega_{i_1,\dots, i_p,k}\Bigg).
$$
Then, observe that $\lcin$, for the $p$ sequences, is equal to $T_p(n,\dots,n,m)$, 
where now $\omega_{i_1,\dots, i_p,k}=\ind_{\{X_{i_1}=\dots=X_{i_p}=\alpha_k\}}$ 
and $\omega_{0,\dots,0,k}=0$, $k=1, \dots, m$, are dependent random variables.  

In view of Theorem~\ref{thm1.1} and of \cite{BM}, one would expect that for $m$ fixed and for {\it iid} exponential weights $\omega_{i,j,k}$ with mean one, 
$T_3(n,n,m)$ converges, when properly centered, by $n$, and scaled, by $\sqrt n$, 
towards   
$$\max_{0=t_0 \le t_1 \le \dots \le t_{m-1} \le t_m=1}
\min\left(\sum_{i=1}^m\left(B^{(i)}_1(t_i)-B^{(i)}_1(t_{i-1})\right), 
\sum_{i=1}^m\left(B^{(i)}_2(t_i)-B^{(i)}_2(t_{i-1})\right)\right), 
$$  
with also the trivial modification for $T_p$.  
\medskip

$\bullet$  Starting with Baryshnikov \cite{Bar} and Gravner, Tracy and Widom 
\cite{GTW} (see, also \cite{BGH}, for a 
further description 
and up to date references) a strong interaction has been shown to exist between 
Brownian functionals, originating in queuing theory with  
Glynn and Whitt~\cite{GW} (see also Sepp\"al\"ainen~\cite{Sep}), and 
maximal eigenvalues of Gaussian random matrices.
Likewise, we hypothesize that the max/min functionals obtained here do enjoy a similar 
strong connection (which might extend to spectra and Young diagrams).  
Could it be that the right-hand side of \eqref{eq00} (with or without the linear terms) 
has the 
same law as the maximal eigenvalue of a random matrix model?
Even in the binary case, it would be interesting to find the law of the processes 
$\big({\sqrt 2}\max_{0\leq t \leq 1}\min(B_1(t) -B_1(1)/2, B_2(t)-B_2(1)/2)\big)_{t\ge 0}$ and 
$\big(\max_{0\leq t \leq 1}\min(B_1(t), B_2(t))\big)_{t\ge 0}$ where, say, $B_1$ and $B_2$ are two independent standard linear Brownian motions.
Very preliminary work on these problems was started 
with \href{http://www.proba.jussieu.fr/pageperso/yor/}{Marc Yor}, before his untimely death, 
and this text is dedicated to his memory.

\medskip

$\bullet$ To finish, note that the LCIS problem for two or more uniform random permutations of $\{1,2,\dots, n\}$ has not been studied either, although it certainly deserves to be. 
In point of fact, it is shown in \cite{HI} that, for any two independent uniform random permutations $\sigma_1$ and $\sigma_2$ of $\{1,2,\dots, n\}$ , and  for any $x\in\bbR$, $\bbp(LC_n(\sigma_1,\sigma_2)\le x)=\bbp (LI_n(\sigma_1)\le x)$, where $LI_n(\sigma_1)$ is the length of the longest increasing subsequences of $\sigma_1$. 
Therefore, this equality in law shows the emergence of the Tracy-Widom distribution, which had sometimes been speculated, as the corresponding limiting law. 
Indeed, once we are given the result of Baik, Deift and Johansson \cite{BaikDeiftJohansson99} on the limiting law of $LI_n(\sigma_1)$, a corresponding result (actually equivalent to it) for $LC_n(\sigma_1,\sigma_2)$ is immediate.  
In fact, many of the results on $LI_n(\sigma_1)$ presented in Romik~\cite{Romik}, such as the law of large numbers of Vershik and Kerov~\cite{VK} are instantaneously transferable to equivalent versions for $LC_n(\sigma_1,\sigma_2)$.  

Moreover, for $p\ge 3$ independent and uniform random permutations $\sigma_1,\sigma_2,\dots, \sigma_p$, the methodology developed in \cite{HI} easily shows that $LC_n(\sigma_1,\sigma_2,\dots, \sigma_p)\stackrel{d}{=} LCI_n (\sigma_1,\dots, \sigma_{p-1})$, where $\stackrel{d}{=}$ denotes equality in distribution. 
Therefore, the study of longest common and increasing subsequences in random words or random permutations which might appear, at first, quite artificial is actually intimately related to the study of longest common subsequences.


\appendix

\section{Appendix}

\subsection{Proofs of technical lemmas}
\label{sec:Appendix_lemma}

\subsubsection*{Proof of Lemma~\ref{lemme:ineg}}
First,  
\begin{eqnarray*}
&\Big|\max_{k=1,\dots, K} \big(a_k\wedge b_k\big)-\max_{k=1,\dots, K} 
\big((a_k+c_k)\wedge (b_k+d_k)\big)\Big| \\ 
&\hskip 2.5cm \le \max_{k=1,\dots, K}\big|\big(a_k\wedge b_k\big)-\big((a_k+c_k)
\wedge (b_k+d_k)\big)\big|.
\end{eqnarray*}
Next, the result will follows from the elementary inequality 
\begin{equation}
\label{eq:10b}
(a\wedge b)-(a+c)\wedge (b+d)\leq |c|\vee|d|, 
\end{equation} 
which is valid for all $a, b, c, d \in \bbR$.   Indeed, set $D=(a\wedge b)-(a+c)\wedge (b+d)$ 
and assume (without loss of generality) that $a\leq b$.  If $a+c\leq b+d$, 
then $D=a-(a+c)=-c\leq |c|$.  If $b+d\leq a+c$, then $D=a-b-d$ and so whenever $a\leq b+d$, 
\eqref{eq:10b} is immediate, while if $a\geq b+d$, then $D=a-b-d\leq -d=|d|$ 
since $a-b\leq 0$ and $-d\geq b-a\geq 0$.
\CQFD


\subsubsection*{Proof of Lemma~\ref{lemme:devZ}}
Let $D_n=\big\{\left|N^{(n)}- \ee[N^{(n)}]\right| < x_n\big\}$, 
and for $\varepsilon>0$, let 
$$
A_n(\varepsilon)=\Big\{\Big|\mbox{$\sum_{j\in [N^{(n)}, \ee[N^{(n)}] ]} 
\frac{Z_j}{\sqrt n}$}\Big|\geq\varepsilon\Big\}.
$$ 
Since $\bbp\big(A_n(\varepsilon)\big)\leq \bbp\big(A_n(\varepsilon)\cap D_n\big)+\bbp(D_n^c)$, 
and since $\lim_{n\to\infty}\bbp(D_n^c)=0$, it is enough to show $\lim_{n\to+\infty}\bbp\big(A_n(\varepsilon)\cap D_n\big)=0$.  
But, by Kolmogorov's maximal inequality, 
\begin{eqnarray*}
\bbp\big(A_n(\varepsilon)\cap D_n\big)&\leq& \bbp\left(\max_{|k-\ee[N^{(n)}]|< x_n}\Big|\sum_{j\in [k,\ee[N^{(n)}] ]} \frac{Z_j}{\sqrt n}\Big|\geq \varepsilon\right)\\
&\leq& \frac{x_n\Var(Z_1)}{\varepsilon^2 n}\to 0, \quad n\to+\infty.
\end{eqnarray*}
\CQFD


\subsubsection*{Proof of Lemma~\ref{lemme:Bn_ui}}

First, we show that $\big(B_n^{(k)}\big(N_k/n\big)^2\big)_{n\geq 1}$ is uniformly integrable. 
Proposition~\ref{prop:S} and Remark~\ref{rem:Sim} give
\begin{eqnarray*}
B_n^{(k)}\Big(\frac{N_k}n\Big)&=&\frac{N_m-N_k}{\sqrt{2n}}+o_\pp(1/\sqrt n)\\
\ee\Big[\Big|B_n^{(k)}\Big(\frac{N_k}n\Big)\Big|^p\Big]
&\leq& 2^{p-1}\left(\ee\Big[\Big|\frac{N_m-N_k}{\sqrt{2n}}\Big|^p\Big]+o(n^{-p/2})\right)\\
&=&2^{-1/2}n^{-p/2}\ee\Big[|N_m-N_k|^p\Big]+o(n^{-p/2}).
\end{eqnarray*}
But $N_m-N_k=\sum_{i=1}^n\epsilon_i^{(m,k)}$ where $(\epsilon_i^{(m,k)})_{i\geq 1}$ are iid with $\epsilon_i^{(m,k)}=1$ when $X_i=\alpha_m$, $\epsilon_i^{(m,k)}=-1$ when $X_i=\alpha_k$ and $\epsilon_i^{(m,k)}=0$ otherwise. 
Hence, by the classical Marcinkiewicz-Zygmund inequality, for some constant $C_p$,
\begin{eqnarray*}
\ee\Big[|N_m-N_k|^p\Big]
&=&\ee\Big[\Big|\sum_{i=1}^n\epsilon_i\Big|^p\Big]\\
&\leq& C_p \ee\Big[\Big(\sum_{i=1}^n |\epsilon_i^{(m,k)}|^2|\Big)^{p/2}\Big]\\
&\leq& C_pn^{p/2}.
\end{eqnarray*}
Therefore, for any $p>2$
\begin{equation*}
\label{eq:Bn_ui1}
\sup_{n\geq 1} \ee\Big[\Big|B_n^{(k)}\Big(\frac{N_k}n\Big)\Big|^p\Big]<+\infty
\end{equation*} 
and $\big(B_n^{(k)}\big(N_k/n\big)^2\big)_{n\geq 1}$ is uniformly integrable. 
Next, for $\big(B_n^{(k)}\big(1/m\big)\big)_{n\geq 1}$:
$$
B_n^{(k)}\Big(\frac 1m\Big)=\frac 1{\sqrt n}\sum_{j=1}^{[n/m]}Z_j^{(k)}+\frac{(nm-[nm])}{\sqrt n} Z_{[n/m]+1}^{(k)}.
$$
and 
\begin{eqnarray*}
\ee\Big[\Big|B_n^{(k)}\Big(\frac 1m\Big)\Big|^p\Big]
&\leq & 2^{p-1}n^{-p/2} \ee\bigg[\Big|\sum_{j=1}^{[n/m]}Z_j^{(k)}\Big|^p\bigg] +2^{p-1}n^{-p/2}\ee\Big[\Big|Z_{[n/m]+1}^{(k)}\Big|^p\Big] \\
&\leq & C_p n^{-p/2} \ee\bigg[\Big(\sum_{j=1}^{[n/m]}|Z_j^{(k)}|^2\Big)^{p/2}\bigg] +2^{p-1}n^{-p/2} \ee\Big[\Big|Z_{1}^{(k)}\Big|^p\Big],
\end{eqnarray*}
using again the Marcinkiewicz-Zygmund inequality. 
Continuing, using convexity, 
$$
C_p n^{-p/2} \ee\bigg[\Big(\sum_{j=1}^{[n/m]}|Z_j^{(k)}|^2\Big)^{p/2}\bigg]\leq C_p n^{-p/2} \ee\bigg[[n/m]^{p/2-1}\sum_{j=1}^{[n/m]}|Z_j^{(k)}|^p\bigg]\leq \frac{C_p}{m^{p/2}} \ee\Big[|Z_1^{(k)}|^p\Big].
$$
Hence, for any $p> 2$,
\begin{equation*}
\label{eq:Bn_ui2}
\sup_{n\geq 1} \ee\Big[\Big|B_n^{(k)}\Big(\frac1m\Big)\Big|^p\Big]<+\infty,
\end{equation*}
and $\big(B_n^{(k)}(1/m)^2\big)_{n\geq 1}$ is uniformly integrable and therefore, from above, so is $\big(B_n^{(k)}(N_k/n)-B_n^{(k)}(1/m)\big)^2$.
Finally, in order to show \eqref{eq:Bn_approxL2}, it is enough to prove  
\begin{equation}
\label{eq:Bn_approxP}
B_n^{(k)}\Big(\frac{N_k}n\Big)-B_n^{(k)}\Big(\frac1m\Big) 
\stackrel{\pp}{\longrightarrow}0, \quad n\to+\infty. 
\end{equation}
Setting $A_n=\{|N_k-n/m|\leq \sqrt n\ln n\}$, Hoeffding's inequality ensures that $\lim_{n\to+\infty} \pp(A_n^c)~=~0$.
Therefore, since 
$$
B_n^{(k)}\Big(\frac{N_k}n\Big)-B_n^{(k)}\Big(\frac 1m\Big)
=\frac 1{\sqrt n} \sum_{j=[n/m]+1}^{N_k} Z_j^{(k)},
$$
we have
\begin{eqnarray*}
\pp\Big(\Big|B_n^{(k)}\Big(\frac{N_k}n\Big)-B_n^{(k)}\Big(\frac 1m\Big)\Big|\geq \varepsilon\Big)
&\leq &\pp\bigg(\Big\{\Big|\sum_{j=[n/m]+1}^{N_k} Z_j^{(k)}\Big|\geq \varepsilon \sqrt n\Big\}\cap A_n\bigg)+\pp(A_n^c),
\end{eqnarray*}
and Kolmogorov's maximal inequality entails that 
\begin{eqnarray*}
\pp\bigg(\max_{l\in[n/m-\sqrt n\ln n,n/m+\sqrt n\ln n]}\Big|\sum_{j=[n/m]+1}^{l} Z_j^{(k)}\Big|\geq \varepsilon \sqrt n\bigg)
&\leq&\frac 1{\varepsilon^2n}\ee\bigg[\sum_{j=[n/m]+1}^{n/m+\sqrt n\ln n}(Z_j^{(k)})^2\bigg]\\
&\leq& \frac{(\ln n) \ee\big[(Z_1^{(k)})^2\big]}{\sqrt n},
\end{eqnarray*}
finishing the proof of \eqref{eq:Bn_approxP} and thus of \eqref{eq:Bn_approxL2}. 


\subsubsection*{Proof of Lemma~\ref{lemme:ThetaF}}

Consider three cases: $|x|\ll n$, $x\gg n$ (here $u_n\ll v_n$ means $\lim_{n\to+\infty} u_n/v_n=0$) and $x\approx n$, i.e., $c_1n\leq x\leq c_2n$, for two finite constants $c_1$ and $c_2$, and expand $K_n(x)$ accordingly.
First, let $|x|\ll n$: then, 
\begin{eqnarray*}
K_n(x)&=&\frac{(2n)^{x+2n}}{(2n)^{x+n}(2n)^n}\frac{(1+\frac x{2n})^{x+2n}}{(1+\frac xn)^{x+n}}\\
&=&\exp\left((x+2n)\ln\left(1+\frac x{2n}\right) -(x+n)\ln\left(1+\frac xn\right)\right)\\
&=&\exp\left((x+2n)\left(\frac x{2n}-\frac{x^2}{8n^2}+o\left(\frac{x^2}{n^2}\right)\right)
-(x+n)\left(\frac xn-\frac{x^2}{2n^2}+o\left(\frac {x^2}{n^2}\right)\right)\right)\\
&=&\exp\left(-\frac{x^2}{4n}+\frac{3x^3}{8n^2}+o\left(\frac{x^3}{n^2}\right)+
o\left(\frac{x^2}{n}\right)\right)\\
&=&\exp\left(-\frac{x^2}{4n}+o\left(\frac{x^2}{n}\right)\right), 
\end{eqnarray*}
which yields \eqref{eq:ThetaF} in case $|x|\ll n$.  
Next, let $x\gg n$: then,  
\begin{eqnarray}
K_n(x)&=&\frac{(x+2n)^{x+2n}}{(2x+2n)^{x+n}(2n)^n}
=\frac{x^n}{(4n)^n2^x}\frac{(1+\frac {2n}x)^{x+2n}}{(1+\frac nx)^{x+n}} \nonumber\\
&=&\frac{x^n}{(4n)^n2^x} \exp\left((x+2n) \ln\left(1+\frac {2n}x\right)
-(x+n)\ln\left(1+\frac nx\right)\right) \nonumber\\
&=&\frac{x^{n}}{(4n)^n2^x} \exp\!\left(\!(x+2n)\!\left(\!\frac{2n}x-\frac{2n^2}{x^2}
+o\left(\frac{n^2}{x^2}\right)\!\!\right)-(x+n)\!
\left(\frac nx-\frac{n^2}{2x^2}+o\left(\frac{n^2}{x^2}\right)\!\right)\!\right) \nonumber\\
&=&\frac{x^{n}}{(4n)^n2^x} \exp\left(n+\frac{3n^2}{2x}-\frac{7n^3}{2x^2}
+o\left(\frac{n^2}{x}\right)\right) \nonumber\\
&=&\exp\left(n+\frac{3n^2}{2x}+n\ln\Big(\frac x{4n}\Big)-x\ln 2 
+o\left(\frac{n^2}{x}\right)\right).\label{eq:lemmethetar}\end{eqnarray}
Since $x\gg n$, the larger order in the exponential \eqref{eq:lemmethetar}  
is $x\ln 2$ and, this recovers a bound of the form \eqref{eq:ThetaF} in this case.  
Finally, consider the case $x\approx n$, say $x=\alpha n$ with $\alpha>-1$. 
Then,  
\begin{eqnarray*}
K_n(x)&=&
\frac{\big((\alpha+2)n\big)^{(\alpha+2)n}}{\big((2\alpha+2)n\big)^{(\alpha+1)n}(2n)^n}
=\exp\big(- c(\alpha) n\big), 
\end{eqnarray*}
which is again of the form \eqref{eq:ThetaF}, since
$c(\alpha)=\ln\big({2(2\alpha+2)^{\alpha+1}/(\alpha+2)^{\alpha+2}}\big)$ 
is positive for all $\alpha>-1$ and is also bounded.
\CQFD


\subsection{On \cite{HLM}}
\label{sec:appendix_HLM}
The purpose of this Appendix is to provide some missing steps in the proof of 
the main theorem in \cite{HLM} devoted to the binary case as well as to correct 
the errors present there. The notations and numbering are as in \cite{HLM}.    
In particular, recall that $N_1$ (resp. $N_2$) is the number of 
zeros in $X_1, \dots, X_n$ (resp. $Y_1, \dots, Y_n$). 

\medskip\noindent
{\bf Proof of (13).}
Recall again from \cite{HLM} that  
\begin{eqnarray*}
V_n&=&\max_{0\le k\leq N_1\wedge N_2} \left(\bigwedge_{i=1,2} \left(-\frac 12 
\widehat B_n^{(i)}\left(\frac 12\right)+\widehat B_n^{(i)}\left(\frac kn\right)\right)\right),\\
X_n&=&\max_{0\le t\leq \frac 12} \left(\bigwedge_{i=1,2} 
\left(-\frac 12 \widehat B_n^{(i)}\left(\frac 12\right)+\widehat B_n^{(i)}(t)\right)\right).\\
\end{eqnarray*}
Clearly,  
\begin{equation}
\label{eq:X*}
X_n\ge\bigwedge_{i=1,2}\left(-\frac 12 \widehat B_n^{(i)}\left(\frac 12\right)
+\widehat B_n^{(i)}\left(\frac 12\right)\right)
=\frac 12 \bigwedge_{i=1,2}\widehat B_n^{(i)}\left(\frac 12\right),
\end{equation}
and denote by $i_\ast$ the index for which the minimum in \eqref{eq:X*} 
is attained.  

Next, if $N_1\wedge N_2\leq n/2$, then $V_n\leq X_n$; and  
similarly if the maximum defining $V_n$ is attained at some $k^\ast\leq n/2$, 
then $V_n\leq X_n$.  
Otherwise, $N_1\wedge N_2\geq n/2$ with, moreover, the maximum defining 
$V_n$ attained at $k^\ast\in[n/2, N_1\wedge N_2]$ and so:   
$$
V_n=\bigwedge_{i=1,2} \left(-\frac 12 \widehat B_n^{(i)}\left(\frac 12\right)
+\widehat B_n^{(i)}\left(\frac {k^\ast}n\right)\right). 
$$
Now, via \eqref{eq:X*},  
\begin{eqnarray*}
V_n-X_n&\leq&\bigwedge_{i=1,2} \left(-\frac 12 \widehat B_n^{(i)}\left(\frac 12\right)
+\widehat B_n^{(i)}\left(\frac {k^\ast}n\right)\right)-\bigwedge_{i=1,2} 
\left(-\frac 12 \widehat B_n^{(i)}\left(\frac 12\right)
+\widehat B_n^{(i)}\left(\frac 12\right)\right)\\
&\leq& \left(-\frac 12 \widehat B_n^{(i_\ast)}\left(\frac 12\right)
+\widehat B_n^{(i_\ast)}\left(\frac {k^\ast}n\right)\right)
-\left(-\frac 12 \widehat B_n^{(i_\ast)}\left(\frac 12\right)+
\widehat B_n^{(i_\ast)}\left(\frac 12\right)\right)\\
&=& \widehat B_n^{(i_\ast)}\left(\frac {k^\ast}n\right)
-\widehat B_n^{(i_\ast)}\left(\frac 12\right)\\
&\leq&\max_{t\in\big[\frac 12, \frac{N_{i_\ast}}n\big]} \left(\widehat B_n^{(i_\ast)}(t)
-\widehat B_n^{(i_\ast)}\left(\frac 12\right)\right)\\
&\leq&\bigvee_{i=1,2}\max_{t\in\big[\frac 12, \frac{N_i}n\big]} 
\left(\widehat B_n^{(i)}(t)-\widehat B_n^{(i)}\left(\frac 12\right)\right).
\end{eqnarray*}

\medskip\noindent
{\bf Inequality \eqref{eq:15new} replacing (15) of \cite{HLM} and its proof.}
If $N_1\wedge N_2\geq n/2$, then $X_n\leq V_n$ 
and similarly if the maximum defining $X_n$ is attained for 
some $t\leq (N_1\wedge N_2)/n$, then $X_n=V_n$. 
Therefore, the remaining case in comparing $X_n$ and $V_n$ 
consists in $N_1\wedge N_2\leq n/2$ and a maximum defining $X_n$ 
attained at some $t^\ast \in \big[(N_1\wedge N_2)/n, 1/2\big]$. 
In this case, 
$$
X_n= \bigwedge_{i=1,2} \left(-\frac 12 \widehat 
B_n^{(i)}\left(\frac 12\right)+\widehat B_n^{(i)}(t^\ast)\right),
$$
and 
\begin{equation}
\label{eq:V*}
V_n\geq \bigwedge_{i=1,2} 
\left(-\frac 12 \widehat B_n^{(i)}\left(\frac 12\right)
+\widehat B_n^{(i)}\left(\frac {N_1\wedge N_2}n\right)\right).
\end{equation}
Again, denote by $i_\ast$ the index for which the 
minimum in \eqref{eq:V*} is attained.  Then, 
\begin{eqnarray}
\nonumber
X_n-V_n&\leq&\bigwedge_{i=1,2} \left(-\frac 12 \widehat 
B_n^{(i)}\left(\frac 12\right)+\widehat B_n^{(i)}(t^\ast)\right)-\bigwedge_{i=1,2} 
\left(-\frac 12 \widehat B_n^{(i)}\left(\frac 12\right)+\widehat B_n^{(i)}
\left(\frac {N_1\wedge N_2}n\right)\right)\\
\nonumber
&\leq& \left(-\frac 12 \widehat B_n^{(i_\ast)}\left(\frac 12\right)
+\widehat B_n^{(i_\ast)}(t^\ast)\right)
-\left(-\frac 12 \widehat B_n^{(i_\ast)}\left(\frac 12\right)
+\widehat B_n^{(i_\ast)}\left(\frac {N_1\wedge N_2}n\right)\right)\\
\nonumber
&=&\widehat B_n^{(i_\ast)}(t^\ast)
-\widehat B_n^{(i_\ast)}\left(\frac {N_1\wedge N_2}n\right)\\
\nonumber
&\leq&\max_{t\in\left[\frac{N_1\wedge N_2}n, \frac 12\right]} \left(\widehat B_n^{(i_\ast)}
(t)-\widehat B_n^{(i_\ast)}\left(\frac {N_1\wedge N_2}n\right)\right)\\
\label{eq:15new}
&\leq&\bigvee_{i=1,2}\max_{t\in\left[\frac{N_1\wedge N_2}n, 
\frac 12\right]} 
\left(\widehat B_n^{(i)}(t)-\widehat B_n^{(i)}\left(\frac {N_1\wedge N_2}n\right)\right).
\end{eqnarray}
Since (15) of \cite{HLM} has to be replaced by \eqref{eq:15new}, instead of (16) of \cite{HLM}, 
we now have to prove that for $i=1,2$: 
\begin{equation}
\label{eq:16*}
\max_{t\in\left[\frac{N_1\wedge N_2}n, \frac 12\right]} 
\left(\widehat B_n^{(i)}(t)-\widehat B_n^{(i)}\left(\frac {N_1\wedge N_2}n\right)\right)\cvP0.  
\end{equation}
The difference with (16) of \cite{HLM} is that $N$, therein, is now 
replaced by $N_1\wedge N_2$ which is 
now more complex since one of the two quantities $N_1$ or $N_2$ 
is not independent of $\widehat B_n$. 
To prove \eqref{eq:16*}, and so as not to further burden the notation, 
the superscript $i$ in the Brownian approximation $\widehat B_n^{(i)}$ is dropped. 
First, let 
$$
C_n^1=\Big\{\left|N_1-\frac n2\right| \leq \sqrt n{\ln n}\Big\},
$$ 
and, in a similar fashion, define $C_n^2$ by replacing $N_1$ 
with $N_2$. 
Clearly, $\lim_{n\to+\infty}\pp\big((C_n^1)^c\big)=\lim_{n\to+\infty}\pp((C_n^2)^c)=0$.  
Next, for $\varepsilon>0$, let 
$$
A_n=\left\{\max_{t\in\left[\frac{N_1\wedge N_2}n, \frac 12\right]} 
\left|\widehat B_n(t)-\widehat B_n\left(\frac{N_1\wedge N_2}n\right)\right|
\ge \varepsilon \right\}.  
$$
Then,  
\begin{eqnarray}
\label{eq:An11}
\pp(A_n)&\leq&\pp\big(A_n\cap C_n^1\cap C_n^2\big)+\pp\big((C_n^1)^c\big)
+\pp\big((C_n^2)^c\big), 
\end{eqnarray}
and since on $C_n^1$ (resp. $C_n^2$), 
${N_1}\geq n/2-\sqrt n\ln n$ (resp. ${N_2}\geq n/2-\sqrt n\ln n$),  
\begin{eqnarray}
\label{eq:An*}
\pp(A_n\cap C_n^1\cap C_n^2)
&\leq& \pp\left(\left\{\max_{k=\frac n2-\sqrt n\ln n, \dots, \frac n2}
\left|\sum_{j=N_1\wedge N_2}^k \xi_j\right|
\ge \varepsilon\sqrt{2n}\right\}\cap C_n^1\cap C_n^2\right), 
\end{eqnarray}
where the random variables $\xi_j$ are iid with mean zero and variance one 
and assuming that $n/2-\sqrt n\ln n$ and $n/2$ are integers (if not replace 
throughout, the first value by its integer part and the second by its 
integer part plus one).    
To deal with \eqref{eq:An*}, first note that on $C_n^1\cap C_n^2$, 
$N_1\wedge N_2\in \big[\frac n2-\sqrt n\ln n,n\big]$, the right-hand 
side of \eqref{eq:An*} is clearly upper-bounded by 
\begin{eqnarray}
\nonumber
&&\pp\left(\left\{\max_{\frac n2-\sqrt n\ln n\leq \ell\leq k\leq \frac n2}
\left|\sum_{j=\ell}^{k}\xi_j\right|\ge 
\varepsilon\sqrt{\frac{n}{2}}\right\} \cap C_n^1\cap C_n^2\right)\\
\label{eq:An1}
&&\hskip 1.5cm \leq \pp\left(\left\{\max_{\frac n2-\sqrt n\ln n\leq  k\leq \frac n2}
\left|\sum_{j=k}^{n/2}\xi_j\right|\ge 
\frac \varepsilon 2\sqrt{\frac{n}{2}}\right\} \cap C_n^1\cap C_n^2\right)\\
 \label{eq:AnE0}
&&\hskip 1.5cm \leq \frac{8\ln n}{\varepsilon^2 \sqrt n},
\end{eqnarray}
where the inequality in \eqref{eq:An1} follows from the bound  
\begin{eqnarray*}
\max_{\frac n2-\sqrt n\ln n\leq \ell\leq k\leq \frac n2}\left|\sum_{j=\ell}^{k}\xi_j\right|
&\leq& \max_{\frac n2-\sqrt n\ln n\leq \ell\leq k\leq \frac n2}
\left(\left|\sum_{j=k}^{n/2}\xi_j\right|+\left|\sum_{j=\ell}^{n/2}\xi_j\right|\right)\\
&\leq& 
2\max_{\frac n2-\sqrt n\ln n\leq  k\leq \frac n2}\left|\sum_{j=k}^{n/2}\xi_j\right|,
\end{eqnarray*}
while the one in \eqref{eq:AnE0} is Kolmogorov's maximal inequality. 
Therefore, the right-hand side of \eqref{eq:An*} converges to zero,  
finishing, via \eqref{eq:An11}, the proof of \eqref{eq:16*}.
\CQFD


\section*{Acknowledgments}

Both authors  thank an anonymous referee for valuable comments which helped to improve this manuscript, as well as Cl\'ement Deslandes for pointing out a gap in our published proof.

{\scriptsize

}

\end{document}